\documentclass[11pt,reqno]{amsart}
\setcounter{tocdepth}{1}
\topmargin -1.2cm \evensidemargin 0cm \oddsidemargin 0cm \textwidth 16cm \textheight 22cm
\usepackage{amscd}
\usepackage{color}
\usepackage[symbol]{footmisc}
\usepackage{amssymb}
\usepackage{amsfonts}
\usepackage{latexsym}
\usepackage{verbatim}
\usepackage{bbm}

\newcommand{\R}{\mathbb{R}}
\newcommand{\C}{\mathbb{C}}
\renewcommand{\H}{\mathbb{H}}
\renewcommand{\epsilon}{\varepsilon}
\renewcommand{\theta}{\vartheta}
\renewcommand{\phi}{\varphi}
\DeclareMathOperator*{\osc}{osc}
\renewcommand{\Re}{\mathrm{Re}}

\newcommand\norma[1]{\left\lVert#1\right\rVert}

\theoremstyle{plain}
\newtheorem{theorem}{Theorem}
\newtheorem{proposition}[theorem]{Proposition}
\newtheorem{lemma}[theorem]{Lemma}

\numberwithin{equation}{section}

\theoremstyle{definition}
\newtheorem*{remark}{Remark}

\theoremstyle{definition}
\newtheorem{definition}{Definition}

\title[Fully non-linear parabolic equations on compact hyperhermitian manifolds]{Fully non-linear parabolic equations on compact manifolds with a flat hyperk\"ahler metric}

\begin{document}
	
	
	\address{Dipartimento di Matematica G. Peano \\ Universit\`a di Torino\\
		Via Carlo Alberto 10\\	10123 Torino\\ Italy.}
	\email{giovanni.gentili@unito.it}
	
	\address{Jiaogen Zhang, School of Mathematical Sciences, University of Science and Technology of China, Hefei 230026, People's Republic of China.}
	\email{zjgmath@ustc.edu.cn}
	
	\subjclass[2020]{35K55, 53C26, 53E30, 35B45}
	\keywords{A priori estimates, HKT manifold, fully non-linear parabolic equations, $\mathcal{C}$-subsolution}
	
	\author{Giovanni Gentili and Jiaogen Zhang}
	
	\date{\today}

	\maketitle
	\begin{abstract}
		Our recent work about fully non-linear elliptic equations on compact manifolds with a flat hyperk\"ahler metric is hereby extended to the parabolic setting. This approach will help us to study some problems arising from hyperhermitian geometry.
	\end{abstract}
	
	\tableofcontents
	
	\section{Introduction}
	After Yau's solution \cite{Y} of the Calabi conjecture \cite{Calabi}, Cao \cite{Cao} was able to provide a parabolic proof, using what is now called the K\"ahler-Ricci flow. Ever since then, it is now a well-established practice to design parabolic geometric flows as an alternative way to solve fully non-linear elliptic equations (see e.g. \cite{BGV,Chu2,FL,FLM,Gill,Gill2,SW19,SW,Sun15,Sun,Sun19,Zhang,Z,Zheng}).
	
	Following this line of thoughts, we extend to the parabolic setting the investigation of our previous work \cite{GZ}, in which we studied a class of fully non-linear elliptic equations on hyperhermitian manifolds. More precisely we took into account equations that are symmetric in the eigenvalues of the quaternionic Hessian of the unknown. In \cite{GZ} we were inspired by the work of Sz\'{e}kelyhidi \cite{Sze}, while here we develop the corresponding parabolic theory in the same spirit as Phong-T\^o \cite{Phong-To}. 
	\medskip
	
	Let $(M,I,J,K,g,\Omega_0)$ be a compact locally flat hyperhermitian manifold where $\Omega_0$ is the $(2,0)$-form induced by $g$, i.e. $\Omega_{0}=g(J\cdot,\cdot)+ig(K\cdot,\cdot)$ (see Section \ref{pre} for further details on the relevant definitions). Here, and throughout the paper, the type decomposition is taken with respect to $I$. The assumption of local flatness allows us to represent locally in quaternionic coordinates every q-real $(2,0)$-form $\Omega$ by a hyperhermitian matrix $ (\Omega_{\bar r s})$. The same is true for a hyperhermitian metric $g$, whose corresponding matrix we denote $(g_{\bar rs})$. Fix one such form $\Omega$, which does not need to coincide with $\Omega_0$. 
	Consider the operator $\partial_J:=J^{-1}\bar \partial J$, where $\bar \partial$ (as well as $\partial$) is taken with respect to $I$ everywhere in the paper. For a smooth real function $\varphi$ on $M$ the $(2,0)$-form $\partial \partial_J \phi$ is q-real.  Then we may associate a hyperhermitian matrix to the form
	\[
	\Omega_{\varphi}:=\Omega+\partial\partial_J\varphi
	\]
	let us denote it by $(\Omega^\phi_{\bar r s})$. Set $ A^r_s[\varphi]=g^{\bar j r}\Omega^\phi_{ \bar j s}$, where $(g^{\bar j r})$ is the inverse matrix of $(g_{\bar j r})$.  
	The matrix $(A^r_s[\varphi])$ defines a hyperhermitian endomorphism of $ TM $ with respect to the metric $ g $ and this makes it meaningful to speak about the $ n $-tuple of its eigenvalues $ \lambda(A[\varphi]) $.
	
	The class of parabolic equations that we take into account in the present paper is the following: 
	\begin{equation}
		\label{eq_main}
		\partial_{t}\varphi=F(A[\varphi])-h\,, \qquad \phi(x,0)=\phi_0\,, \qquad t\in [0,\infty)\,,
	\end{equation}
	where $ h\in C^\infty(M,\R) $ is the datum and $ F(A[\phi])=f(\lambda(A[\phi])) $ is a smooth symmetric operator of the eigenvalues of $ A[\phi] $ satisfying certain assumptions. More precisely, let $ \Gamma $ be a proper convex open cone in $ \R^n $ with vertex at the origin, containing the positive orthant
	\begin{equation*}
		\Gamma_n=\{ \lambda=(\lambda_1,\dots,\lambda_n) \in \R^n \mid \lambda_i>0,\, i=1,\dots,n \}\,,
	\end{equation*}
	and assume that $ \Gamma $ is symmetric, i.e. it is invariant under permutations of the $ \lambda_i $'s. We require that $ f\colon \Gamma \to \R $ satisfies the following assumptions:
	\begin{enumerate}
		\itemsep0.2em
		\item[C1)] $ f_i:=\frac{\partial f}{\partial \lambda_i}>0 $ for all $ i=1,\dots,n $ and $ f $ is a concave function.
		\item[C2)] $ \sup_{\partial \Gamma}f<\inf_Mh $, where $ \sup_{\partial \Gamma}f=\sup_{\lambda_0\in \partial \Gamma} \limsup_{\lambda \to \lambda_0} f(\lambda) $.
		\item[C3)] For any $ \sigma <\sup_\Gamma f $ and $ \lambda \in \Gamma $ we have $ \lim_{t\to \infty} f(t\lambda)>\sigma $.
	\end{enumerate}
	Assumption C1 implies parabolicity of equation \eqref{eq_main} over the space of $\Gamma$-admissible functions, where a function $\phi \in C^{1,1}(M\times [0,T))$ is {\em $\Gamma$-admissible} if
	\[
	\lambda( A[\varphi] )\in \Gamma\,,\qquad \text{for all }(x,t)\in M\times [0,T)\,.
	\]
	In particular, from standard parabolic theory, equation \eqref{eq_main} admits a unique maximal smooth solution. Assumption C2 guarantees that the level sets of $f$ do not intersect the boundary of $\Gamma$, this yields non-degeneracy of \eqref{eq_main} and entails uniform parabolicity, once we obtain the $C^{1,1}$ estimate. We also remark that the assumptions on $\Gamma$ imply the inclusion
	\begin{equation}\label{Gamma in Gamma1}
		\Gamma \subseteq \left\{ (\lambda_1,\dots, \lambda_n) \in \R^n \mid \sum_{i=1}^n \lambda_i>0 \right\}\,.
	\end{equation}
	\medskip
	
	We now project $\Gamma$ onto a new cone in $\R^{n-1}$:
	\[
	\Gamma_{\infty}=\{\lambda'=(\lambda_{1},\cdots,\lambda_{n-1})\in \mathbb{R}^{n-1} \mid \text{there exists} \ \lambda_{n}\in \mathbb{R} \ \text{such that} \ (\lambda',\lambda_{n})\in \Gamma\}\,.
	\]
	Therefore, for every $\lambda'\in \Gamma_\infty$, there exists a constant $s_{0}$ such that for each $s\geq s_{0}$, we have $(\lambda',s)\in \Gamma$. Let $ f_{\infty}(\lambda')=\lim_{s\rightarrow \infty}f(\lambda',s).$ It is an observation of Trudinger \cite{Trudinger} that, since $f$ is concave on $\Gamma$, there is a dichotomy:
	\begin{enumerate}
		\itemsep0.2em
		\item[(i)] Either $f_{\infty}$ is unbounded at any point in $\Gamma_{\infty}$ and we will refer to this case by saying that $f$ is \textit{unbounded} over $\Gamma$;
		\item[(ii)] Or $f_{\infty}$ is bounded on $\Gamma_{\infty}$ and we will simply say that $f$ is \textit{bounded} over $\Gamma$.
	\end{enumerate}

	Before stating our main results, we need to recall the terminology of parabolic $\mathcal{C}$-subsolutions introduced in \cite{Phong-To}.
	\begin{definition}\label{Def}
		We say that a function $\underline{\phi}\in C^{1,1}(M\times [0,T))$ is a {\em parabolic $\mathcal{C}$-subsolution} for equation \eqref{eq_main} if there exist uniform constants $\delta, R>0$, such that on $M\times [0,T)$,
		\begin{equation}\label{cone condition}
			f(\lambda(A[\underline{\phi}])+\mu)-\partial_{t} \underline{\phi}+\tau=h, ~ \mu+\delta \textbf{1}\in \Gamma_{n}~\textrm{and}~\tau>-\delta
		\end{equation}
		implies that $|\mu|+|\tau|<R $, where ${\bf 1}=(1,1,\dots,1)$.
	\end{definition}
	
	In the unbounded case, as we shall show, any $\Gamma$-admissible function is a parabolic $\mathcal{C}$-subsolution, and we have the following result:
	\begin{theorem}\label{unbounded theorem}
		Suppose $f$ is unbounded on $\Gamma$. Let $(M,I,J,K,g,\Omega_0)$ be a compact flat hyperk\"ahler manifold. Then for any $\Gamma$-admissible initial datum $\varphi_{0}$, the solution $\varphi$ to \eqref{eq_main} exists for all time.
		
		Moreover, if we let
		\begin{equation}\label{tilde varphi}
			\tilde{\varphi}=\varphi-\frac{\int_{M}\varphi\,\Omega^{n}_0\wedge\bar{\Omega}^{n}_0}{\int_{M}\Omega^{n}_0\wedge\bar{\Omega}^{n}_0}\,,
		\end{equation}
		then $\tilde{\varphi}$ converges smoothly to some function $\tilde{\varphi}_{\infty}\in C^\infty(M,\R)$ as $t\rightarrow\infty$, and there exists a constant $b\in\mathbb{R}$  such that
		\begin{equation}\label{elliptic function}
			F(A[\tilde{\varphi}_{\infty}])=h+b\,.
		\end{equation}  
	\end{theorem}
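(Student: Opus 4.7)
The plan is to proceed in two phases. In the first phase I establish a priori estimates on any finite time interval $[0,T]$, which together with standard parabolic continuation yield long-time existence; in the second phase I promote these to estimates that are uniform in $t\in[0,\infty)$ and prove exponential decay of $\osc_M\partial_t\varphi$, from which smooth convergence follows.

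The starting point is the reduction claim that, in the unbounded case, any $\Gamma$-admissible function (and in particular $\varphi_0$ viewed as $t$-independent) serves as a parabolic $\mathcal{C}$-subsolution in the sense of Definition \ref{Def}. If $\mu+\delta\mathbf{1}\in\Gamma_n$, $\tau>-\delta$ and $f(\lambda(A[\varphi_0])+\mu)+\tau=h$, then $f(\lambda(A[\varphi_0])+\mu)\le\sup_M h+\delta$; on the other hand, since $\mu_i>-\delta$ for all $i$, pushing $|\mu|\to\infty$ within this cone and using monotonicity, concavity and unboundedness of $f_\infty$ forces $f\to\infty$, a contradiction. Hence $|\mu|+|\tau|<R$ uniformly. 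With this subsolution in hand, the a priori estimates to be developed in the rest of the paper, in the spirit of \cite{Sze,Phong-To,GZ}, should give uniform $C^{0}$, $C^{1}$ and $C^{1,1}$ bounds on $\varphi$ over $[0,T]$; assumption C2 provides uniform parabolicity, so the parabolic Evans-Krylov theorem together with Schauder theory upgrade these to $C^{\infty}$ bounds. Standard continuation then yields existence for all $t\in[0,\infty)$.

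The main obstacle is promoting these finite-time estimates to bounds uniform in $t$. For the $C^{0}$ estimate on the normalisation $\tilde\varphi$ in \eqref{tilde varphi} I would adapt a quaternionic Alexandrov-Bakelman-Pucci or Moser iteration argument, exploiting the volume form $\Omega_0^n\wedge\bar\Omega_0^n$ and the fact that $\int_M\tilde\varphi\,\Omega_0^n\wedge\bar\Omega_0^n=0$. For the time derivative $\dot\varphi:=\partial_t\varphi$, differentiating \eqref{eq_main} in $t$ shows that $\dot\varphi$ solves a linear parabolic equation whose leading operator is governed by $F^{ij}(A[\varphi])$; the maximum principle then yields that $\max_M\dot\varphi$ is non-increasing and $\min_M\dot\varphi$ non-decreasing, so $\dot\varphi$ is uniformly bounded and $\osc_M\dot\varphi$ is monotone.

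Convergence is then a quantitative refinement of this monotonicity. Applying a parabolic Harnack inequality to $\dot\varphi-\min_M\dot\varphi$ and to $\max_M\dot\varphi-\dot\varphi$, as in \cite{Cao,Phong-To}, I expect to obtain exponential decay $\osc_M\dot\varphi(\cdot,t)\le Ce^{-\alpha t}$, so that $\dot\varphi\to b$ uniformly for some $b\in\R$. Integration in $t$ gives $C^{0}$ convergence $\tilde\varphi\to\tilde\varphi_\infty$, and the uniform higher-order estimates upgrade this to $C^{\infty}$. Passing to the limit in \eqref{eq_main} yields \eqref{elliptic function}. The hardest part will be ensuring that every a priori bound is quantitatively $t$-independent and that the exponential rate $\alpha$ does not degenerate as $t\to\infty$.
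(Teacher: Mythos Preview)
Your proposal is correct and follows essentially the same route as the paper: use unboundedness of $f$ to make $\varphi_0$ a parabolic $\mathcal{C}$-subsolution, feed this into the chain of a priori estimates ($C^0$ for $\tilde\varphi$ via an ABP-type argument, Laplacian and gradient bounds, Evans--Krylov and bootstrapping) to get long-time existence with uniform $C^\infty$ control of $\tilde\varphi$, and then derive exponential decay of $\osc_M\partial_t\varphi$ from a parabolic Harnack inequality (obtained in the paper via a Li--Yau differential inequality) to conclude smooth convergence and \eqref{elliptic function}. The only organisational difference is that the paper does not split into two phases: it obtains time-independent estimates on $\osc_M\varphi$ and $\tilde\varphi$ directly (in the unbounded case by freezing $t$, using the bound on $\partial_t\varphi$, and invoking the \emph{elliptic} $C^0$ estimate of \cite{GZ}), so long-time existence and the uniform bounds needed for convergence are established simultaneously rather than in separate passes.
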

	\medskip
	
	Before we discuss the bounded case we present two of the many possible applications provided by Theorem \ref{unbounded theorem}, namely we show the convergence of the {\em quaternionic Hessian flow} and of the {\em $(n-1)$-quaternionic plurisubharmonic flow} on compact flat hyperk\"ahler manifolds. 
	
	Let $(M,I,J,K,g,\Omega_0)$ be a compact hyperhermitian manifold. Let $ 1\leq k \leq n $ and fix a q-real $k$-positive $(2,0)$-form $\Omega$, that is
	\begin{equation*}
		\frac{\Omega^{i}\wedge \Omega_0^{n-i}}{\Omega_0^n}>0 \qquad \mbox{ for every }i=1,\dots,k\,. 
	\end{equation*}
	Then the {\em quaternionic Hessian flow} can be written as 
	\begin{equation}\label{qh}
		\partial_{t}\varphi=\log \frac{\Omega^{k}_\varphi \wedge \Omega^{n-k}_0}{\Omega^{n}_0}-H\,,\qquad \varphi\in {\rm QSH}_{k}(M,\Omega)\,,
	\end{equation}
	where $ H\in C^\infty(M,\R) $ is the datum and ${\rm QSH}_{k}(M,\Omega)$ is the space of continuous functions $\varphi$ such that $\Omega_{\varphi}$ is a $k$-positive $q$-real $(2,0)$-form in the sense of currents.
	
	\begin{theorem}\label{theorem Hessian}
		Let $(M,I,J,K,g,\Omega_0)$ be a compact flat hyperk\"ahler manifold and $ \Omega $ a q-real $ k $-positive $ (2,0) $-form. Then for any smooth initial datum $\varphi_{0}\in {\rm QSH}_{k}(M,\Omega)$,
		\begin{enumerate}
			\itemsep0.2em
			\item the solution $\varphi$ to \eqref{qh} exists for all time;
			\item the normalization $\tilde\varphi$ (defined as in \eqref{tilde varphi}) converges smoothly as $t\rightarrow\infty$ to a function $\tilde \varphi_{\infty}\in {\rm QSH}_{k}(M,\Omega)$ , and there exists a constant $b\in\mathbb{R}$  such that
			\begin{equation}\label{qh-1}
				\frac{\Omega^{k}_{\tilde \varphi_{\infty}} \wedge \Omega^{n-k}_0}{\Omega^{n}_0}=b\,\mathrm{e}^H\,.  
			\end{equation}
		\end{enumerate}
	\end{theorem}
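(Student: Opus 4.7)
The plan is to recognize the quaternionic Hessian flow \eqref{qh} as a special case of \eqref{eq_main} and apply Theorem \ref{unbounded theorem} directly. Set $\Gamma := \Gamma_k$, the $k$-th G\aa rding cone in $\R^n$ (the connected component of $\{\sigma_k > 0\}$ containing the positive orthant $\Gamma_n$, equivalently $\{\sigma_j > 0 : 1\leq j\leq k\}$), and
\[
f(\lambda) := \log \sigma_k(\lambda) + c_{n,k},
\]
where $\sigma_k$ denotes the $k$-th elementary symmetric polynomial and $c_{n,k}$ is the combinatorial constant chosen so that $f(\lambda(A[\varphi])) = \log \bigl(\Omega_\varphi^k \wedge \Omega_0^{n-k}/\Omega_0^n\bigr)$. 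With $h := H$, equation \eqref{qh} becomes an instance of \eqref{eq_main}, and a smooth $\varphi$ lies in ${\rm QSH}_k(M,\Omega)$ precisely when $\lambda(A[\varphi]) \in \Gamma_k$.

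Next I would verify the structural hypotheses of Theorem \ref{unbounded theorem}. Condition C1) follows from classical G\aa rding theory: $\sigma_k^{1/k}$ is concave on $\Gamma_k$ with strictly positive partials, and hence so is $f = k\log\sigma_k^{1/k}+c_{n,k}$. Condition C2) is immediate since $\sigma_k\to 0$ on $\partial\Gamma_k$, giving $\sup_{\partial\Gamma_k}f = -\infty < \inf_M H$. Condition C3) is trivial because $f(t\lambda) = k\log t + f(\lambda) \to +\infty$ as $t\to\infty$. To see that $f$ is \emph{unbounded} over $\Gamma_k$ in Trudinger's sense, it suffices by the dichotomy to exhibit a single $\lambda'\in(\Gamma_k)_\infty$ with $f_\infty(\lambda')=+\infty$: for any $\lambda'$ in the positive orthant of $\R^{n-1}$, the leading coefficient $\sigma_{k-1}(\lambda')$ of the affine polynomial $s\mapsto \sigma_k(\lambda',s)$ is strictly positive, so $f_\infty(\lambda')=+\infty$.

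Theorem \ref{unbounded theorem} then yields the long-time existence claim (1) at once, together with a smooth limit $\tilde\varphi_\infty$ and a constant $b'\in\R$ such that $F(A[\tilde\varphi_\infty]) = H + b'$. By the choice of $c_{n,k}$, this reads $\log\bigl(\Omega_{\tilde\varphi_\infty}^k\wedge\Omega_0^{n-k}/\Omega_0^n\bigr) = H + b'$, and setting $b := \mathrm{e}^{b'}>0$ gives \eqref{qh-1}. Membership $\tilde\varphi_\infty\in {\rm QSH}_k(M,\Omega)$ is automatic from the strict positivity of the right-hand side in \eqref{qh-1}, which forces $\lambda(A[\tilde\varphi_\infty])\in\Gamma_k$ pointwise. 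The main bookkeeping point, and really the only place requiring care, is the translation between the current-theoretic definition of ${\rm QSH}_k$ and pointwise $\Gamma_k$-admissibility along the flow; but instantaneous parabolic regularization makes this a non-issue for smooth initial data, so the whole argument reduces to the dictionary above plus Theorem \ref{unbounded theorem}.
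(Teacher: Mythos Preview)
Your proposal is correct and follows essentially the same approach as the paper: set $\Gamma=\Gamma_k$ and $f=\log\sigma_k$ (up to an additive constant), verify C1--C3 and unboundedness, and invoke Theorem~\ref{unbounded theorem}. Your write-up is in fact more detailed than the paper's, which simply cites \cite{Spruck} for C1--C3 and declares the unboundedness check to be straightforward; your explicit treatment of the additive constant $c_{n,k}$, the exponentiation $b=\mathrm{e}^{b'}$, and the $\Gamma_k$-admissibility of the limit are all natural elaborations of the same argument.
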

	We remark that the constant $b$ in \eqref{qh-1} is uniquely determined by
	\begin{equation*}
		b=\frac{\int_M \Omega^k_{\tilde \phi_{\infty}}\wedge \Omega^{n-k}_0\wedge \bar \Omega_0^n}{\int_M \mathrm{e}^H \Omega^n_0\wedge \bar \Omega_0^n}\,.
	\end{equation*}
	
	Flow \eqref{qh} provides the quaternionic counterpart of the complex Hessian flow (see e.g. \cite{SW19}). For $k=1$ equation \eqref{qh} is the parabolic Poisson equation, while for $k=n$ it becomes the parabolic quaternionic Monge-Ampère equation. Thus, Theorem \ref{theorem Hessian} generalizes the main result of \cite{BGV,Z}, which was inspired by the investigation of the quaternionic Monge-Amp\`{e}re equation on compact HKT manifolds proposed in \cite{Alesker-Verbitsky (2010)} as an analogue of the Calabi-Yau Theorem. Broadly speaking, HKT geometry constitutes a promising and interesting quaternionic analogue of K\"ahler geometry (see e.g. \cite{Alesker-Verbitsky (2006),Banos,Barberis-Fino,Chen,GF,GLV,Grantcharov-Poon (2000),Howe-Papadopoulos (1996),Ivanov,Lejmi-Weber,SThesis,Swann,Verbitsky (2002),Verbitsky (2007)} and the reference therein). As shown in \cite{Verbitsky (2009)}, the solvability of such a ``quaternionic Calabi conjecture'' would lead to an interesting geometric application as it would imply the existence of a balanced HKT metric on compact HKT manifolds with holomorphically trivial canonical bundle. Despite the equation not yet being entirely solved, there are some partial results available (see \cite{Alesker (2013),Alesker-Shelukhin (2013),Alesker-Shelukhin (2017),BGV,DinewSroka,GentiliVezzoni,GV,GZ,Sroka,Z}). 
	\medskip	
	
	Our second aforementioned application is the $(n-1)$-quaternionic plurisubharmonic flow. Let $(M,I,J,K,g,\Omega_0)$ be a compact hyperhermitian manifold and $\Omega_{1}$ be a positive $q$-real $(2,0)$-form. Denote with $\Delta_g $ the quaternionic Laplacian with respect to $g$. The {\em $(n-1)$-quaternionic plurisubharmonic flow} is encoded in the following parabolic equation: 	
	\begin{equation}\label{n-1 qpsh flow}
		\partial_t \phi=\log\frac{ \left(\Omega_{1}+\frac{1}{n-1}\big[(\Delta_{g} \phi)\Omega_{0}-\partial\partial_J \phi\big]\right)^{n}}{\Omega_0^n}-H\,, \qquad \phi \in {\rm QPSH}_{n-1}(M,\Omega_1,\Omega_0)\,,		
	\end{equation}
	where ${\rm QPSH}_{n-1}(M,\Omega_1,\Omega_0)$ denotes the space of continuous functions $\phi$ that are $(n-1)$-quaternionic plurisubharmonic with respect to $\Omega_1$ and $\Omega_0$, i.e. $ \Omega_{1}+\frac{1}{n-1}\big[(\Delta_{g} \phi)\Omega_{0}-\partial\partial_J \phi \big]>0 $ in the sense of currents.	
	\begin{theorem}\label{theorem n-1 qpsh flow}
		Let $(M,I,J,K,g,\Omega_0)$ be a compact flat hyperk\"ahler manifold and $ \Omega_{1} $ a q-real positive $ (2,0) $-form. Then for any smooth initial datum $\phi_0\in {\rm QPSH}_{n-1}(M,\Omega_1,\Omega_0)$,
		\begin{enumerate}
			\itemsep0.2em
			\item the solution $\phi$ to \eqref{n-1 qpsh flow} exists for all time;
			\item the normalization $\tilde\varphi$ of $\phi$ (defined as in \eqref{tilde varphi}) converges smoothly as $t\rightarrow\infty$ to a function $\tilde \varphi_{\infty}\in {\rm QPSH}_{n-1}(M,\Omega_1,\Omega_0)$, and there exists a constant $b\in\mathbb{R}$  such that
			\begin{equation}\label{n-1 qpsh flow 1}
				\left(\Omega_{1}+\frac{1}{n-1}\big[(\Delta_{g} \tilde \phi_\infty)\Omega_{0}-\partial\partial_J \tilde \phi_\infty \big]\right)^{n}=b\, {\rm e}^H\Omega_0^n\,.
			\end{equation}
		\end{enumerate}
	\end{theorem}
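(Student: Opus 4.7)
The plan is to realize \eqref{n-1 qpsh flow} as a particular instance of \eqref{eq_main} with $F=\log\det$, and then invoke Theorem \ref{unbounded theorem}. Concretely, I would work with the cone $\Gamma=\Gamma_n$, the symmetric function $f\colon\Gamma_n\to\mathbb{R}$ given by $f(\lambda)=\sum_{i=1}^n\log\lambda_i$, and the hyperhermitian endomorphism
\[
A^r_s[\phi]=g^{\bar j r}\bigl(\tilde{\Omega}[\phi]\bigr)_{\bar j s},\qquad \tilde{\Omega}[\phi]=\Omega_1+\tfrac{1}{n-1}\bigl[(\Delta_{g}\phi)\Omega_0-\partial\partial_J\phi\bigr].
\]
At each point $g$ and $A[\phi]$ can be simultaneously diagonalized, so the condition $\phi\in{\rm QPSH}_{n-1}(M,\Omega_1,\Omega_0)$ is equivalent to $\lambda(A[\phi])\in\Gamma_n$; in particular the initial datum $\phi_0$ is $\Gamma_n$-admissible, and \eqref{n-1 qpsh flow} takes the form $\partial_t\phi=f(\lambda(A[\phi]))-H$.

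Next I would verify the structural hypotheses C1--C3 for $f(\lambda)=\sum_i\log\lambda_i$ on $\Gamma_n$: one has $f_i=1/\lambda_i>0$ and $f$ is concave as a sum of concave functions of one variable (C1); $\sup_{\partial\Gamma_n}f=-\infty<\inf_M H$ (C2); and $f(t\lambda)=n\log t+f(\lambda)\to+\infty$ as $t\to\infty$, so $f$ is unbounded on $\Gamma_n$ (C3). Being in the unbounded case, every $\Gamma_n$-admissible function is automatically a parabolic $\mathcal{C}$-subsolution in the sense of Definition \ref{Def}, and Theorem \ref{unbounded theorem} applies. It yields long-time existence of $\phi$, smooth convergence of the normalization $\tilde\phi$ to some $\tilde\phi_\infty\in C^\infty(M,\mathbb{R})$, and a constant $b'\in\mathbb{R}$ with $f(\lambda(A[\tilde\phi_\infty]))=H+b'$. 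Exponentiating produces \eqref{n-1 qpsh flow 1} with $b=\mathrm{e}^{b'}$, and $\tilde\phi_\infty\in{\rm QPSH}_{n-1}(M,\Omega_1,\Omega_0)$ follows from closedness of the positivity condition together with $\mathrm{e}^H>0$.

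The main obstacle is that $A[\phi]$ as above is not literally of the form $g^{-1}(\Omega+\partial\partial_J\phi)$ that appears in \eqref{eq_main}: the Hessian $\partial\partial_J\phi$ has been replaced by its image under the fibrewise linear map $P\mapsto\tfrac{1}{n-1}[(\operatorname{tr}_g P)\Omega_0-P]$, whose action on eigenvalues sends $(\mu_i)$ to $(\tfrac{1}{n-1}\sum_{j\ne i}\mu_j)$. One must therefore verify that the a priori estimates underlying Theorem \ref{unbounded theorem} — the $C^0$, $\partial_t\phi$, and second-order bounds — survive this substitution. This is routine in the analogous complex case \cite{Phong-To}: the linear map is invertible, positivity preserving on the relevant cones, preserves hyperhermiticity, and does not disrupt the concavity of the composite operator $f\circ\lambda\circ A$; its contribution amounts to fixed background factors that can be absorbed into the constants of the estimates, so the proof of Theorem \ref{unbounded theorem} carries through with only bookkeeping changes, yielding Theorem \ref{theorem n-1 qpsh flow}.
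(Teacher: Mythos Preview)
Your route is workable but differs from the paper's, and the difference matters. You keep $f=\log\det$ on $\Gamma_n$ and instead modify $A[\phi]$ to include the fibrewise linear map $P\mapsto\tfrac{1}{n-1}[(\mathrm{tr}_g P)\Omega_0-P]$; this forces you to step outside the literal hypotheses of Theorem~\ref{unbounded theorem} and assert, in your final paragraph, that all the a priori estimates ``carry through with only bookkeeping changes.'' That assertion is plausible, but it is not a proof, and verifying it means reopening the $C^0$, Laplacian, and higher-order arguments of Sections~\ref{C0}--\ref{higher}. The paper sidesteps this entirely by absorbing the linear map into $f$ and $\Gamma$ rather than into $A$: it sets $T(\lambda)_k=\tfrac{1}{n-1}\sum_{i\ne k}\lambda_i$, takes $f=\log\sigma_n\circ T$ on the cone $\Gamma=T^{-1}(\Gamma_n)$, and chooses
\[
\Omega=\Re\bigl(g^{\bar j s}(\Omega_1)_{\bar j s}\bigr)\Omega_0-(n-1)\Omega_1.
\]
A short computation then shows that the eigenvalues of $g^{-1}\tilde\Omega[\phi]$ are precisely $T(\lambda(A[\phi]))$ for the \emph{standard} $A[\phi]=g^{-1}(\Omega+\partial\partial_J\phi)$, so \eqref{n-1 qpsh flow} becomes literally $\partial_t\phi=f(\lambda(A[\phi]))-H$ in the form \eqref{eq_main}. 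Conditions C1--C3 and unboundedness of $f$ on $\Gamma$ are immediate, and Theorem~\ref{unbounded theorem} applies as a black box. What your approach buys is a simpler $f$; what the paper's buys is a genuine one-line reduction with no estimates to revisit.
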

	The constant $ b $ in \eqref{n-1 qpsh flow 1} is uniquely determined by
	\begin{equation*}
		b=\frac{\int_M \big(\Omega_{1}+\frac{1}{n-1}\big[(\Delta_{g} \tilde \phi_\infty )\Omega_{0}-\partial\partial_J \tilde \phi_\infty \big]\big)^{n}\wedge \bar \Omega_0^n}{\int_M \mathrm{e}^H \Omega^n_0\wedge \bar \Omega_0^n}\,.
	\end{equation*}	
	The complex version of flow \eqref{n-1 qpsh flow} was studied by Gill \cite{Gill2} as a parabolic approach to the complex Monge-Amp\`ere equation for $(n-1)$-plurisubharmonic functions, which originally arose from superstring theory in the works of Fu, Wang and Wu \cite{FWW10,FWW15}, and was then solved by Tosatti and Weinkove \cite{TW17,TW19} (see also \cite{CHZ,HZ}). As proven in \cite{GZ}, the solvability of \eqref{n-1 qpsh flow 1} leads to Calabi-Yau--type theorems for quaternionic balanced, quaternionic Gauduchon, and quaternionic strongly Gauduchon metrics. Therefore, convergence of flow \eqref{n-1 qpsh flow} results to be an interesting tool in the search of special metrics.
	\medskip	
	
	Going back to the discussion of the bounded case we observe that, unfortunately, under this assumption $\Gamma$-admissible functions might not be $\mathcal{C}$-subsolutions. Compared to Theorem \ref{unbounded theorem} the main result in the bounded case looks a little bit more artificial, as it requires some additional assumptions.
	\begin{theorem}\label{bounded theorem}
		Suppose $f$ is bounded on $\Gamma$. Let $(M,I,J,K,g,\Omega_0)$ be a compact flat hyperk\"ahler manifold. For any $\Gamma$-admissible initial datum $\varphi_{0}$, let $\varphi\in C^\infty(M\times[0,T),\R) $ be the maximal solution of flow \eqref{eq_main}. Assume further that
		\begin{enumerate}
			\itemsep0.2em
			\item[(i)] either it holds 
			\begin{equation}\label{case-1}
				\partial_{t}\underline{\varphi}\geq \sup_{M}\left(F(A[\varphi_{0}])-h\right)\,;
			\end{equation}
			\item[(ii)] or there exists a  non-increasing function $\Phi$ of class $C^{1}$ on $\mathbb{R}$ such that
			\begin{equation}\label{case-2}
				\begin{cases}
					\sup_M \left(\varphi(\cdot,t)-\underline{\varphi}(\cdot,t)-\Phi(t)\right)\geq 0\,,\\
					\sup_M \big(\varphi(\cdot,t)-\Phi(t)\big)\leq -C \inf_M \left(\varphi(\cdot,t)-\Phi(t)\right)+C\,
				\end{cases}
			\end{equation}
		\end{enumerate}
		for all $t\in (0,T)$ and a time-independent positive constant $C$. Then $T=\infty$, i.e. the solution $\phi$ exists for all times, and the normalization $\tilde{\varphi}$ converges smoothly to a function $\tilde{\varphi}_{\infty}\in C^\infty(M,\R)$ as $t\rightarrow\infty$, which solves \eqref{elliptic function} for some $b\in\mathbb{R}$.
	\end{theorem}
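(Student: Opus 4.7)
The plan is to implement the standard machinery for parabolic fully non-linear equations — a priori $C^0$, $C^1$, $C^2$, then higher-order estimates, followed by a convergence argument on $\partial_t\varphi$ — in the same spirit as Theorem \ref{unbounded theorem} and \cite{Phong-To}, but adapted to the bounded regime where $\Gamma$-admissibility no longer automatically yields the parabolic $\mathcal{C}$-subsolution property. The role of hypotheses (i) or (ii) is precisely to compensate for this gap so that the maximum-principle arguments for the various estimates go through; throughout, the existence of $\underline{\varphi}$ is now an input rather than a consequence.

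First I would establish a two-sided $L^\infty$ bound on $u:=\partial_t\varphi$. Differentiating \eqref{eq_main} in $t$, $u$ satisfies a linear parabolic equation of the form $\partial_t u=F^{\bar s r}(A[\varphi])(\partial\partial_J u)_{\bar s r}$. Under hypothesis (i), comparing with $\partial_t\underline{\varphi}$ at $t=0$ and applying the parabolic maximum principle yields the upper bound on $u$, while the lower bound propagates from $u(\cdot,0)=F(A[\varphi_0])-h$. Under hypothesis (ii), at a time/point where $\varphi-\underline{\varphi}-\Phi$ attains its spatial supremum the cone condition \eqref{cone condition} of Definition \ref{Def} forces pointwise control on $\partial_t\varphi-\partial_t\underline{\varphi}$; combined with the second inequality in \eqref{case-2} this produces both $|\partial_t\varphi|\leq C$ and $\osc_M\varphi\leq C$ on $[0,T)$. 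In either case, comparison with $\underline{\varphi}$ delivers the uniform $C^0$ estimate.

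The heart of the argument is the $C^2$ estimate. I would apply the maximum principle to a quantity of the form $G=\log\lambda_{\max}(A[\varphi])+\psi(\varphi-\underline{\varphi})$ with $\psi$ a suitably convex function, following Sz\'ekelyhidi \cite{Sze} and Phong--T\^o \cite{Phong-To}. At the spatial-maximum point of $G$ the cone condition in Definition \ref{Def} is invoked to produce an admissible direction in $\Gamma$, and combined with concavity of $f$ and the $\partial_t\varphi$ bound this closes the computation. The gradient estimate then follows via the maximum principle on $|\nabla\varphi|^2 e^{\psi(\varphi-\underline{\varphi})}$, using the freshly obtained $C^2$ control as in \cite{GZ}. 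Local flatness of $g$ is essential here: it lets us treat $\partial\partial_J$ as a standard hyperhermitian Hessian in quaternionic coordinates and reduces the maximum-principle computation to its elliptic analogue already carried out in \cite{GZ}. With $C^2$ bounds in hand, hypothesis C2 gives uniform parabolicity, so the parabolic Evans--Krylov theorem and Schauder theory upgrade to $C^{k,\alpha}$ bounds for all $k$, giving $T=\infty$.

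For convergence of $\tilde{\varphi}$, I would follow \cite{Phong-To}: exploiting concavity of $f$, the linearized evolution of $\partial_t\varphi$ together with the uniform parabolicity implies $\osc_M\partial_t\varphi\to 0$ exponentially; hence $\partial_t\tilde{\varphi}\to b$ for some $b\in\R$, and the uniform higher-order bounds plus interpolation yield smooth convergence of $\tilde{\varphi}$ to a limit $\tilde{\varphi}_\infty$ solving \eqref{elliptic function}. The main obstacle will be the $C^2$ step in the bounded regime: unlike the unbounded case we cannot exploit the automatic subsolution property of $\Gamma$-admissible functions, so the delicate matching between the cone condition of Definition \ref{Def} and the extra hypotheses (i)--(ii) — which serve as the parabolic replacement for that property — must be worked out carefully at the maximum point.
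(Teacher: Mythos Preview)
Your overall architecture --- $C^0$, Laplacian, gradient, higher order, convergence --- matches the paper, but there are two substantive gaps.

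First, you have misplaced where hypotheses (i)--(ii) do their work. In the paper they are used \emph{only} at the $C^0$ stage (Proposition~\ref{c0 estimate}); once the oscillation is controlled, the Laplacian, gradient, higher-order and convergence steps are \emph{identical} to the unbounded case and use the $\mathcal{C}$-subsolution $\underline{\varphi}$ in exactly the same way (via Lemma~\ref{pre C2}). Concretely: under (i), Lemma~\ref{Bounds on partial-t} already gives $\partial_t\varphi\leq\sup_M(F(A[\varphi_0])-h)\leq \partial_t\underline{\varphi}$, so $\underline{\varphi}$ is an elliptic $\mathcal{C}$-subsolution of $F(A[\varphi])=h+\partial_t\varphi$ for each fixed $t$, and the elliptic $C^0$ estimate of \cite{GZ} applies. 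Under (ii), the paper does \emph{not} use the cone condition at a spatial supremum as you suggest; it instead applies the parabolic ABP estimate of Tso (Lemma~\ref{lower bound unbounded case}) to obtain $\varphi-\underline{\varphi}-\Phi\geq -C$, and the second inequality in \eqref{case-2} then closes the oscillation bound. Your ``cone condition at the spatial supremum'' sketch does not produce such a lower bound.

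Second, your $C^2$ test function $G=\log\lambda_{\max}+\psi(\varphi-\underline{\varphi})$ omits the gradient term, and in this generality that computation will not close: the paper (following Hou--Ma--Wu and Sz\'ekelyhidi) uses $\hat G=2\sqrt{\lambda_1}+\alpha(|\nabla\varphi|^2)+\beta(\tilde v)$ and obtains only the conditional bound $\|\Delta_g\varphi\|_{C^0}\leq C(\|\nabla\varphi\|_{C^0}+1)$. The gradient bound is then deduced \emph{afterwards} by elliptic interpolation (Proposition~\ref{gradient}), not by a separate maximum-principle argument. So your ordering ``$C^2$ first, then $C^1$ using the $C^2$ control'' is inverted relative to what actually works here. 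Finally, the convergence step in the paper is not a soft consequence of uniform parabolicity: it goes through a Li--Yau type differential Harnack inequality (Lemmas~\ref{Lemma heat1}--\ref{Lemma heat2}) for $\psi=\partial_t\varphi+B$, yielding exponential decay of $\osc_M\partial_t\varphi$ and hence $\partial_t\tilde\varphi\to 0$ (not $\to b$; the constant $b$ is the limit of the spatial average of $\partial_t\varphi$).
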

	
	Within the bounded case various equations can be included, for instance, parabolic quaternionic Hessian quotient equations, parabolic quaternionic mixed Hessian equations. We limit ourselves to prove the following general result.		
	\begin{theorem}\label{theorem 5}
		Suppose $f$ is bounded on $\Gamma$. Let $(M,I,J,K,g,\Omega_0)$ be a compact flat hyperk\"ahler manifold. If there exists a $\Gamma$-admissible function $\phi_0$ and a $\mathcal{C}$-subsolution of the equation
		\[
		F(A[\phi])=h
		\]
		in the sense of \cite{GZ}. Then there exists a smooth solution of the equation
		\[
		F(A[\phi])=h+b
		\]
		for some constant $b\in \R$.
	\end{theorem}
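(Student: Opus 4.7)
The plan is to deduce Theorem \ref{theorem 5} from Theorem \ref{bounded theorem}, by promoting the given elliptic $\mathcal{C}$-subsolution $\underline{\varphi}$ of $F(A[\varphi])=h$ (in the sense of \cite{GZ}) to a parabolic $\mathcal{C}$-subsolution of the flow \eqref{eq_main} with initial datum $\varphi_0$, and by checking that condition (ii) of Theorem \ref{bounded theorem} is satisfied. The first step is to verify that $\underline{\varphi}$, regarded as time-independent on $M\times[0,T)$, satisfies Definition \ref{Def}. Since $\partial_t \underline{\varphi}=0$, the parabolic cone condition reduces to showing that the set of pairs $(\mu,\tau)$ with $\mu+\delta\mathbf{1}\in\Gamma_n$, $\tau>-\delta$, and $f(\lambda(A[\underline{\varphi}])+\mu)+\tau=h$ is bounded. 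The boundedness of $f$ over $\Gamma$ gives $f\le \sup_{\Gamma_\infty}f_\infty<\infty$, which confines $\tau$ to a compact interval; the elliptic $\mathcal{C}$-subsolution property together with a standard continuity argument around the value $h$ then supplies the uniform bound on $\mu$.

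Next I would verify condition (ii) of Theorem \ref{bounded theorem}. A natural candidate for the required non-increasing function $\Phi(t)$ is built from the mean value $c(t)=\int_M \varphi(\cdot,t)\,\Omega_0^n\wedge\bar\Omega_0^n/\int_M \Omega_0^n\wedge\bar\Omega_0^n$ from \eqref{tilde varphi}, taking for instance $\Phi(t):=\inf_{s\in[0,t]}c(s)$, which is non-increasing by construction. After replacing $\underline{\varphi}$ by $\underline{\varphi}-\overline{\underline{\varphi}}$, where $\overline{\underline{\varphi}}$ denotes its mean, so that its mean becomes zero, the first inequality in (ii) follows from $\sup_M(\varphi-\underline{\varphi})\ge \mathrm{mean}(\varphi-\underline{\varphi})=c(t)\ge \Phi(t)$. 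The second inequality becomes the oscillation-type estimate $\sup_M\varphi+C\inf_M\varphi\le C+(C+1)\Phi(t)$, which must be established uniformly in $t$.

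The main obstacle is precisely this uniform oscillation estimate. Whereas in the unbounded setting of Theorem \ref{unbounded theorem} every $\Gamma$-admissible function is automatically a parabolic $\mathcal{C}$-subsolution and the $C^0$ control comes essentially for free, in the bounded case the elliptic $\mathcal{C}$-subsolution $\underline{\varphi}$ must be leveraged through quantitative PDE estimates. The route I would follow is the analogue of the $C^0$ estimate used in the elliptic theory of \cite{GZ}, adapted to the present time-dependent setting through an ABP- or Moser-type iteration argument on the compact flat hyperk\"ahler manifold $(M,I,J,K,g,\Omega_0)$. The local flatness hypothesis enters, exactly as elsewhere in the paper, through the availability of quaternionic coordinates in which $\partial\partial_J$ behaves as a genuine quaternionic Hessian, so that the maximum-principle-based iteration of \cite{GZ} transfers verbatim. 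Once (ii) is verified, Theorem \ref{bounded theorem} applies directly and produces the smooth limit $\tilde\varphi_\infty$ with $F(A[\tilde\varphi_\infty])=h+b$, which is the desired solution.
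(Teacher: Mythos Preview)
Your approach differs from the paper's in a way that makes your argument substantially harder and leaves genuine gaps. The paper does \emph{not} verify condition (ii) of Theorem~\ref{bounded theorem}; it invokes condition (i). Since the elliptic $\mathcal{C}$-subsolution $\underline\varphi$ is time-independent, the lemma in Section~\ref{pre} already tells you it is a parabolic $\mathcal{C}$-subsolution, and the paper then simply asserts that \eqref{case-1} is trivially satisfied with $\partial_t\underline\varphi\equiv 0$. That is the whole proof.

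Your route through condition (ii) has two concrete problems. First, the running infimum $\Phi(t)=\inf_{s\in[0,t]}c(s)$ of a smooth function is in general only Lipschitz, not $C^1$, so it does not meet the hypothesis of Theorem~\ref{bounded theorem} as stated. Second, and more seriously, the second inequality in \eqref{case-2} does not follow from your argument. Writing $\varphi-\Phi=\tilde\varphi+\bigl(c(t)-\Phi(t)\bigr)$ with $c(t)-\Phi(t)\ge 0$, the required bound $\sup_M(\varphi-\Phi)\le -C\inf_M(\varphi-\Phi)+C$ fails whenever $c(t)-\Phi(t)$ is large, since both sides shift by that nonnegative amount but with opposite signs. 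Your proposal to recover this via an ``ABP- or Moser-type iteration'' from \cite{GZ} is effectively circular: the second line of \eqref{case-2} is exactly the input that Proposition~\ref{c0 estimate} consumes to output the oscillation bound, so establishing it by an independent $C^0$ argument amounts to re-proving the estimate you are trying to invoke, not to checking a hypothesis.

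In short, the paper's three-line proof via condition (i) avoids all of this; if you want an alternative via (ii), you would need a genuinely different choice of $\Phi$ (one that stays within bounded distance of $c(t)$ while remaining $C^1$ and non-increasing) together with an argument that does not presuppose the oscillation control you are after.
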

	
	\medskip
	The organization of the paper is the following. Our main results (Theorems \ref{unbounded theorem} and \ref{bounded theorem}) are proved via a fairly standard technique, that requires a priori estimates for the solution of our flow and its normalization. After a brief discussion of preliminaries in Section \ref{pre}, we prove estimates of order zero in Section \ref{C0}. Section \ref{Laplacian} is then devoted to finding a bound for the quaternionic Laplacian of the solution, in terms of the norm of its gradient. This type of bound is suitable to apply an interpolation argument that gives an estimate for the gradient; this is explained in Section \ref{Grad}. The aim of Section \ref{higher} is to apply an Evans-Krylov--type Theorem to deduce a $C^{2,\alpha}$ estimate, which is readily improved to higher order estimates via a standard bootstrapping argument. Long-time existence is also proved here. The convergence of flow \eqref{eq_main} is then showed in Section \ref{conv}. Finally, in the last Section, we employ these estimates to present a proof of Theorems \ref{unbounded theorem} and \ref{bounded theorem} as well as Theorems \ref{theorem Hessian}, \ref{theorem n-1 qpsh flow} and \ref{theorem 5}	which readily follow.
	\medskip

	\noindent{\bf Notation.} Let $Q\subseteq M\times [0,\infty)$ and fix $\alpha\in (0,1]$. We write
	\begin{itemize}
		\item $u\in C^{0,\alpha}(Q)$ if there exists a positive constant $C$ such that for $(x,s)$, $(y,t)\in Q$, the following holds
		\[ |u(x,s)-u(y,t)|\leq C(|x-y|^{\alpha}+|t-s|^{\frac{\alpha}{2}}). \]
		\item  $u\in C^{1,\alpha}(Q)$ if $u$ is $C^{\frac{\alpha+1}{2}}$ in time and $\nabla u$ is $C^{\alpha}$ in space.
		\item  $u\in C^{2,\alpha}(Q)$ if $\partial_tu$ is $\frac{\alpha}{2}$-H\"older in time and $\nabla^{2} u$ is $C^{\alpha}$ in space.
	\end{itemize}
	
	\medskip
	
	\noindent {\bf Acknowledgements.} The first author warmly thanks Luigi Vezzoni for continuous support and useful discussions.  The second author thanks  Prof. Xi Zhang for his constant encouragements. Both authors are grateful to the referee for a very careful reading of the paper, its useful comments and improvements; in particular, the referee suggested to use interpolation theory to obtain the gradient estimate in Section \ref{Grad}, which simplifies our previous proof.

	\section{Preliminaries}\label{pre}
	In this section we describe some basic useful preliminaries in hyperhermitian geometry and investigate a little bit the notion of parabolic $\mathcal{C}$-subsolution.
	\medskip
	
	A smooth manifold $M$ of real dimension $4n$ is called {\em hypercomplex} if it is equipped with three complex structures $I,J,K$ that behave like the standard quaternion units:
	\[
	IJ=-JI=K\,,
	\]
	we call the triple $(I,J,K)$ a {\em hypercomplex structure} on $M$.
	
	A hypercomplex structure is said to be {\em locally flat} if $M$ is locally isomorphic to the flat space $\H^n$ of $n$-tuples of quaternions. By this, we mean that, locally, $(I,J,K)$ is the pull-back of the standard hypercomplex structure on $\H^n$ given by the unit quaternions $\mathrm{i,j,k}$. By abuse of language if $(I,J,K)$ is locally flat we also say that the hypercomplex manifold $M$ is locally flat. This class of manifolds was originally studied by Sommese \cite{So}. As explained e.g. in \cite{GZ}, on a locally flat hypercomplex manifold $(M,I,J,K)$ we can introduce {\em quaternionic coordinates} $(q^1,\dots,q^n)$ around any point. This is a major difference with respect to complex and real manifold which always admit neighborhoods isomorphic to open subsets of $ \C^n $ and $ \R^n $ respectively.
	\medskip

	Let $(M,I,J,K,g)$ be a {\em hyperhermitian manifold}, which is a hypercomplex manifold endowed with a hyperhermitian metric $g$, i.e. is a Riemannian metric that is Hermitian with respect to each of $I,J,K$. To the hyperhermitian metric is naturally associated a $(2,0)$-form 
	$$
	\Omega_{0}=\omega_J+i\omega_K
	$$ 
	where $\omega_J$ and $\omega_K$ are the fundamental forms of $(g,J)$ and $(g,K)$ respectively. The form $\Omega_0$ will be said to be induced by the hyperhermitian metric $g$ and satisfies some nice properties, namely it is q-real and positive. A $(2p,0)$-form $\alpha$ on a hypercomplex manifold $(M,I,J,K)$ is called {\em q-real} if $J\bar{\alpha}=\alpha$, where the action of $J$ is given by $J\alpha=\alpha(J\cdot,\cdots,J\cdot)$; and it is called positive if it is q-real and $\alpha(Z_1,J\bar Z_1,\dots,Z_{2p},J\bar Z_{2p})>0$ for every vector fields $Z_1,\dots,Z_{2p}$ on $M$ of type $(1,0)$.
	
	A hyperhermitian manifold $(M,I,J,K,g)$ is called hyperk\"ahler if for any induced complex structure $L\in \H$ with $L^2=-1$, the corresponding Hermitian manifold $(M,L,g)$ is K\"ahler. It is well-known that $(M,I,J,K,g)$ is hyperk\"ahler if and only if $d\Omega_0=0$. The weaker assumption $\partial \Omega_0=0$ is the defining condition for the realm of HKT geometry. 
	\medskip
	
	Let $(M,I,J,K,g)$ be a locally flat hyperhermitian manifold and $\Omega$ a q-real $(2,0)$-form. Equation \eqref{eq_main} can be expressed in terms of the matrix
	\[
	A^r_s[\phi]=g^{\bar j r} \Omega_{\bar j s}^\phi=g^{\bar j r}\left( \Omega_{\bar j s}+ \phi_{\bar j s} \right)
	\]
	where $(g^{\bar j r})$ is the inverse matrix of $(g_{\bar j r})$ and $(\phi_{\bar j s})$ denotes the hyperhermitian matrix associated to $\partial \partial_J \phi$. Recall that a quaternionic matrix $H\in \H^{n,n}$ is called {\em hyperhermitian} if $^t\bar{H}=H$. With respect to quaternionic local coordinates $(q^1,\dots,q^n)$ it is well-known that
	\[
	\phi_{\bar r s}=\frac{1}{4} \partial_{\bar q^r} \partial_{q^s} \phi=:\mathrm{Hess}_\H \phi \,,
	\]
	where the so-called {\em Cauchy-Riemann-Fueter} operators $\partial_{\bar q^r}$ and $\partial_{q^s}$ act on smooth $ \H $-valued functions as follows 
	$$
	\partial_{	\bar q^r}u:=\sum_{i=0}^3e_{i}\, \partial_{x_i^r}u\,, \qquad \partial_{q^r}u:=\partial_{x_0^r}u\, e_0-\sum_{i=1}^3 \partial_{x_i^r}u\, e_i\,;
	$$
	here we are denoting the unit quaternions $1,\mathrm{i,j,k}$ with $e_0,e_1,e_2,e_3$ and we are taking the derivatives with respect to the real coordinates underlying the quaternionic ones, according to the relation $q^r=x^r_0+x^r_1\mathrm{i}+x^r_2\mathrm{j}+x^r_3\mathrm{k}$. The operators $\partial_{\bar q^r}$ and $\partial_{q^s}$ commute, but in general they do not satisfy the Leibniz or the chain rule, so care must be taken during computations.	
	Taking the real part of the trace of the {\em quaternionic Hessian} $\mathrm{Hess}_\H u$ with respect to the metric $g$ we have a second order linear elliptic operator called the {\em quaternionic Laplacian}
	\begin{equation*}
		\Delta_g u:=\Re\,\mathrm{tr}_g(\mathrm{Hess}_\H u )=\Re \left(g^{\bar jr} u_{\bar j r}\right).
	\end{equation*}
	More generally, when the manifold is not necessarily locally flat, the quaternionic Laplacian allows an intrinsic definition as
	\[
	\Delta_g u:=n\frac{\partial \partial_J u\wedge \Omega_0^{n-1}}{\Omega_0^n}\,,
	\]
	where $ \Omega_0$ is the $ (2,0) $-form induced by $g$. By \cite[Lemma 3]{BGV}, when the manifold is locally flat we recover the previous definition. Note that in quaternionic local coordinates $\Delta_g u $ is the sum of the eigenvalues of $\mathrm{Hess}_\H u$ with respect to $ g $.
	\medskip

	Now we briefly discuss the notion of $\mathcal{C}$-subsolution. Sz\'{e}kelyhidi introduced it in \cite{Sze} for elliptic equations. His definition is also shown to be a relaxation of that given by Guan \cite{Guan}. As for the parabolic case, Guan, Shi and Sui \cite{GSS} worked on Riemannian manifolds with the classical notion of a subsolution, while Phong and T\^o provided in \cite{Phong-To} the extension to the parabolic case of Sz\'{e}kelyhidi's definition. Of course, as we shall see in a moment with a characterization of $\mathcal{C}$-subsolutions, what happens in hyperhermitian geometry is entirely parallel to the Hermitian case. Thus, Definition \ref{Def} is the right extension of the one given in \cite{GZ} for the elliptic case. We shall refer to $\mathcal{C}$-subsolutions in the sense of \cite{GZ} as \emph{elliptic} ones.
	
	\begin{lemma}
		Let $\underline{\phi}\in C^{1,1}(M\times [0,+\infty))$ be such that $\|\underline{\phi}\|_{C^{1,1}}<+\infty$. Then $\underline{\phi}$ is a parabolic $\mathcal{C}$-subsolution if and only if there exists a uniform constant $\rho>0$ such that
		\[
		\lim_{s\to \infty}f(\lambda[\underline{\phi}(x,t)]+se_i)-\partial_t \underline \phi (x,t)>\rho+h(x)
		\]
		for each $i=1,\dots,n$, where $e_i$ is the $i$\textsuperscript{th} standard basis vector of $\R^n$. In particular when $\underline{\phi}$ is time-independent it is a $\mathcal C$-subsolution in the parabolic sense if and only if it is such in the elliptic sense.
	\end{lemma}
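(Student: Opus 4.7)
The plan is to adapt the elliptic characterization of $\mathcal{C}$-subsolutions due to Sz\'ekelyhidi~\cite{Sze} (cf.~the hyperhermitian version in \cite{GZ}) to the parabolic setting, treating $c(x,t):=h(x)+\partial_t\underline{\phi}(x,t)$ as an effective right-hand side. The standing hypothesis $\|\underline{\phi}\|_{C^{1,1}}<+\infty$, together with compactness of $M$ and smoothness of $h$, forces the pair $(\lambda[\underline{\phi}(x,t)],c(x,t))$ to stay in a fixed compact subset of $\Gamma\times\R$; this is exactly what will upgrade Sz\'ekelyhidi's pointwise statement to a uniform one.

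For the forward implication I would argue by contradiction: were no uniform $\rho>0$ to work, I could find sequences $(x_k,t_k)\in M\times[0,\infty)$ and indices $i_k\in\{1,\dots,n\}$ with
\[
\lim_{s\to\infty} f\left(\lambda[\underline{\phi}(x_k,t_k)]+s e_{i_k}\right)-\partial_t\underline{\phi}(x_k,t_k)\leq \tfrac{1}{k}+h(x_k).
\]
Choosing $s_k\to\infty$ and setting $\mu_k=s_k e_{i_k}$, $\tau_k=h(x_k)-f(\lambda+s_ke_{i_k})+\partial_t\underline{\phi}(x_k,t_k)$, the equation in \eqref{cone condition} holds by construction, $\mu_k+\delta\mathbf{1}\in\Gamma_n$ for $s_k>\delta$, and the displayed inequality gives $\tau_k>-\delta$ for $k$ large. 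Since $|\mu_k|=s_k\to\infty$, this contradicts the boundedness in Definition~\ref{Def}.

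For the backward implication I would fix $(x,t)$ and rewrite the hypothesis as $\lim_{s\to\infty} f(\lambda+s e_i)>c(x,t)+\rho$ for every $i$. Sz\'ekelyhidi's elliptic characterization then supplies constants $\delta,R>0$, uniform in $(x,t)$ by the compactness observation above, such that $\mu+\delta\mathbf{1}\in\Gamma_n$ and $f(\lambda+\mu)\leq c+\delta$ force $|\mu|<R$. Any admissible triple $(\mu,\tau)$ as in \eqref{cone condition} satisfies $f(\lambda+\mu)=c-\tau<c+\delta$, hence $|\mu|<R$; a matching bound on $|\tau|$ is then obtained from the two-sided control of $f$ on compact subsets of $\Gamma$, combining $\mu+\delta\mathbf{1}\in\Gamma_n$ with concavity of $f$ to extract a lower bound for $f(\lambda+\mu)$.

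The main obstacle I expect is precisely the uniformity of $\delta$ and $R$ in $(x,t)$: Sz\'ekelyhidi's argument gives these constants pointwise and possibly depending on $\lambda$ and on the right-hand side, and I will have to track the dependence carefully to see that the $C^{1,1}$ bound and the compactness of $M$ suffice to trivialize it. The final assertion is then immediate, since when $\underline{\phi}$ is time-independent $\partial_t\underline{\phi}\equiv 0$ and the parabolic criterion reduces verbatim to the elliptic one used in \cite{GZ}.
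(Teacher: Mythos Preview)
Your proposal is correct and follows the same route as the paper, which simply defers to \cite[Lemma 8]{Phong-To}; your plan is precisely a sketch of that argument, with the right identification of $c=h+\partial_t\underline{\phi}$ as the effective elliptic datum and the correct observation that the $C^{1,1}$ bound forces $(\lambda[\underline\phi],c)$ into a fixed compact set, yielding the needed uniformity. One small slip: in the last step of the backward implication the lower bound on $f(\lambda+\mu)$ comes from monotonicity ($f_i>0$) together with $\mu+\delta\mathbf{1}\in\Gamma_n$ (so $f(\lambda+\mu)\geq f(\lambda-\delta\mathbf{1})$, which is uniformly bounded below on the compact set), not from concavity.
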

	\begin{proof}
		The proof can be reproduced almost verbatim from \cite[Lemma 8]{Phong-To}.
	\end{proof}
	
	This lemma in particular implies that when $f$ is unbounded over $\Gamma$, every $\Gamma$-admissible function is a parabolic $\mathcal{C}$-subsolution.
	\medskip
	
	We conclude this section by fixing some notations. Unless otherwise stated we shall always denote by $\phi$, $\tilde \phi$ and $\underline \phi$ the maximal solution to flow \eqref{eq_main} with initial datum $\phi_0$, its normalization as in \eqref{tilde varphi} and a parabolic $\mathcal{C}$-subsolution in the sense of Definition \ref{Def}, respectively. All these functions are assumed to be defined over $M\times[0,T)$, where $(M,I,J,K,g)$ is a compact locally flat hyperhermitian manifold and $T$ is the maximal time of existence of $\phi$.
	
	From here on, we will always denote with $ C $ a positive constant that only depends on background data (not on time!), including the initial datum $\phi_0$. Occasionally we might say that $C$ is {\em uniform}, to stress that it is time-independent. As it is customary, the constant $C$ may change value from line to line. 
	
	\section{$C^0$ estimates}\label{C0}	
	In this section we achieve estimates of order zero for the solution $\phi$ and its normalization $\tilde \phi $. We start by bounding their time derivatives, then, in order to treat the bounded case we need an additional inequality proved in Lemma \ref{lower bound unbounded case}. Such lemma follows as an application of the parabolic version of the Alexandroff-Bakelman-Pucci (ABP) inequality due to Tso \cite[Proposition 2.1]{Tso} by adapting the argument of Phong-T\^o \cite[Lemma 1]{Phong-To}.
	\medskip
	
	\subsection{Bounds on $\partial_{t}\varphi$ and $\partial_t \tilde{\phi}$.}	
	
	\begin{lemma}\label{Bounds on partial-t}
		We have
		\begin{equation}\label{up and low of phit}
			\inf_{M}\big(F(A[\varphi_{0}])-h\big)\leq \partial_{t}\varphi\leq  \sup_{M}\big(F(A[\varphi_{0}])-h\big)
		\end{equation}
		and
		\begin{equation}\label{up and low of tilde phit}
			\left \lvert \partial_{t} \tilde \varphi \right\rvert \leq C\,,
		\end{equation}
		for a uniform constant $C>0$ depending only on $h$ and the initial datum $\phi_0$. 
	\end{lemma}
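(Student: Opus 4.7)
\smallskip

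The plan is to prove the two bounds separately, exploiting the fact that differentiating equation \eqref{eq_main} in time yields a homogeneous linear parabolic equation for $\partial_t\varphi$, with no zero-order term.

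\smallskip

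More precisely, setting $Q:=\partial_t\varphi$ and differentiating the identity $\partial_t\varphi=F(A[\varphi])-h$ with respect to $t$, I obtain
\[
\partial_t Q \;=\; F^{ij}\,\partial_t A^i_j[\varphi]\;=\;F^{ij}\,g^{\bar k i}\,\partial_t\varphi_{\bar k j}\;=\;F^{ij}\,g^{\bar k i}\,Q_{\bar k j},
\]
where $F^{ij}=\partial F/\partial A^i_j$ evaluated at $A[\varphi]$. By assumption C1 the matrix $(F^{ij})$ is positive definite along the flow (since $\varphi$ is $\Gamma$-admissible), so $Q$ satisfies a linear parabolic equation of the form $\partial_t Q=LQ$ on the compact manifold $M$ (no boundary) with no zero-order term. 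The standard parabolic maximum and minimum principle then imply that $Q(\cdot,t)$ attains its supremum and infimum at $t=0$, and using $Q(x,0)=F(A[\varphi_0])(x)-h(x)$ we obtain \eqref{up and low of phit}.

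\smallskip

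For \eqref{up and low of tilde phit}, I use the explicit definition \eqref{tilde varphi}. Differentiating in $t$ gives
\[
\partial_t\tilde\varphi \;=\; \partial_t\varphi \;-\; \frac{\int_M \partial_t\varphi\;\Omega_0^n\wedge\bar\Omega_0^n}{\int_M \Omega_0^n\wedge\bar\Omega_0^n},
\]
and since $\partial_t\varphi$ is uniformly bounded by \eqref{up and low of phit} in terms of $F(A[\varphi_0])$ and $h$ (and thus in terms of background data), so is its average, hence $|\partial_t\tilde\varphi|\leq C$ for a constant depending only on $h$ and $\varphi_0$.

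\smallskip

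No real obstacle is expected; the only point that deserves a brief remark is the justification for applying the parabolic maximum principle to $Q$, which relies on the smoothness of $\varphi$ on $M\times[0,T)$ (ensured by standard parabolic theory once $\varphi$ is $\Gamma$-admissible, so that \eqref{eq_main} is parabolic by C1) and on the compactness of $M$, which removes boundary issues.
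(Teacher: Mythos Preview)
Your proof is correct and follows exactly the same approach as the paper: differentiate the flow in time to obtain a homogeneous linear parabolic equation for $\partial_t\varphi$, apply the parabolic maximum principle to obtain \eqref{up and low of phit}, and then deduce \eqref{up and low of tilde phit} directly from the definition of $\tilde\varphi$. The only cosmetic difference is that the paper writes the linearized operator with an explicit real part and the factor $\tfrac14$ coming from the quaternionic Hessian convention, i.e.\ $\partial_t(\partial_t\varphi)=\tfrac14\,\Re\big(F^{rs}\partial_{\bar q^r}\partial_{q^s}(\partial_t\varphi)\big)$.
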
	
	
	\begin{proof}
		Differentiating the flow \eqref{eq_main} along $\partial_t$ we see that $\partial_{t}\varphi$ satisfies the following heat type equation
		\begin{equation}\label{heat equation}
			\partial_{t}\big(\partial_{t}\varphi\big)=\frac{1}{4}\Re\, \big(F^{rs }\partial_{\bar{q}^r}\partial_{q^s}(\partial_t \varphi)\big)\,,
		\end{equation}
		where $ F^{rs}:=\frac{\partial F}{\partial A_{rs}}.$ By the parabolic maximum principle for \eqref{heat equation}, we know that $\partial_{t}\varphi$  hits its extremum  at $t=0$. Thus,
		\begin{equation*}
			\inf_{M\times\{0\}}\partial_{t}\varphi\leq \partial_{t}\varphi\leq \sup_{M\times\{0\}}\partial_{t}\varphi\,, \qquad \partial_{t}\varphi(\cdot,0)=F(A[\varphi_{0}])-h
		\end{equation*}
		and we then obtain \eqref{up and low of phit}. The bound \eqref{up and low of tilde phit} on $|\partial_t \tilde \phi|$ follows immediately.
	\end{proof}
	We remark that a direct consequence of the previous lemma is the following short-time estimate:
	\begin{equation}\label{local c0}
		|\varphi|\leq C\delta \,, \qquad \text{on} \, M\times [0,\delta]\,.
	\end{equation}
	
	\medskip	
	
	\subsection{Intermediate bounds.}	
	\begin{lemma}\label{wsubh}
		Let $\psi$ be a smooth function on $M\times [0,T)$ satisfying
		\begin{equation}\label{weak subharmonic}
			\Delta_{g}\psi\geq c_{0}   
		\end{equation}
		for a uniform constant $c_{0}\in \R$, then there exist $ p,C>0 $, depending only on the background data, such that
		\[
		\|\psi-\sup_{M}\psi\|_{L^p(M)}\leq C\,.
		\]
	\end{lemma}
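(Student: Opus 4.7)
The plan is to deduce the claim from a Green's function estimate for the quaternionic Laplacian on the compact manifold. Set $u=\sup_M\psi-\psi$, so that $u\geq 0$ on $M$, $u(x_0)=0$ at some point $x_0\in M$, and the hypothesis becomes $\Delta_g u\leq -c_0$. Integrating this over the closed manifold and using $\int_M\Delta_g u\,d\mu_g=0$ forces $-c_0\,\mathrm{Vol}(M,g)\geq 0$, so we may assume $c_0\leq 0$.

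Next I will invoke a Green's function for $\Delta_g$. A short computation in quaternionic coordinates—based on the identity $\Re(\partial_{\bar q^r}\partial_{q^r}u)=\sum_{i=0}^{3}\partial_{x_i^r}^{2}u$ recalled in Section \ref{pre}—shows that on a locally flat hyperk\"ahler manifold $\Delta_g$ coincides with a positive constant multiple of the Riemannian Laplacian of $g$. Hence $\Delta_g$ is a self-adjoint linear elliptic operator on the compact manifold $M$ and admits a Green's function $G(x,y)$, smooth off the diagonal with integrable singularity along it, normalized so that $\int_M G(x,\cdot)\,d\mu_g=0$; a lower bound for $G$ then furnishes a constant $K>0$ such that $\tilde G:=G+K\geq 0$ on $M\times M$.

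Next, the Green's representation formula gives, using again $\int_M\Delta_g u\,d\mu_g=0$,
$$u(x)=\bar u-\int_M G(x,y)\Delta_g u(y)\,d\mu_g(y)=\bar u-\int_M \tilde G(x,y)\Delta_g u(y)\,d\mu_g(y),$$
with $\bar u:=\mathrm{Vol}(M,g)^{-1}\int_M u\,d\mu_g$. Because $\tilde G\geq 0$ and $\Delta_g u\leq -c_0$, the last integral is bounded below by $c_0 K\,\mathrm{Vol}(M,g)$. Evaluating at $x_0$ yields $0=u(x_0)\geq \bar u+c_0 K\,\mathrm{Vol}(M,g)$, that is, $\bar u\leq -c_0 K\,\mathrm{Vol}(M,g)$, a uniform bound. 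Combined with $u\geq 0$, this gives $\|u\|_{L^1(M)}\leq C$, and Jensen's inequality applied to the concave function $t\mapsto t^{p}$ extends it to $\|u\|_{L^p(M)}\leq C'$ for every $p\in(0,1]$; taking for instance $p=1$ yields the claim.

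The only non-routine ingredient is the availability of a well-behaved Green's function for $\Delta_g$, but the identification of the quaternionic Laplacian with (a multiple of) the genuine Riemannian Laplacian in the flat hyperk\"ahler setting reduces this to classical elliptic theory on compact Riemannian manifolds, so I do not anticipate any substantial obstacle. Should a higher $L^p$ exponent be required later, a standard Moser iteration starting from the $L^1$-bound just obtained would supply it.
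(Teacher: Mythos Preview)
Your argument is correct and yields the lemma with $p=1$ in the flat hyperk\"ahler setting, but it proceeds along a genuinely different line from the paper. The paper covers $M$ by coordinate balls, applies the weak Harnack inequality \cite[Theorem~9.22]{GT} to the nonnegative supersolution $-\Psi=\sup_M\psi-\psi$ on each ball to obtain $\|\Psi\|_{L^p(B_i)}\leq C(\inf_{B_i}(-\Psi)+1)$, and then chains these local bounds across overlapping balls, starting from one on which $\inf(-\Psi)=0$. Your route instead identifies $\Delta_g$ with a constant multiple of the Riemannian Laplacian and reads off an $L^1$ bound directly from the Green representation formula evaluated at the touching point $x_0$. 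Your approach is shorter and delivers the stronger exponent $p=1$, but it leans on the flat hyperk\"ahler hypothesis in an essential way (both for the identity $\int_M\Delta_g u\,d\mu_g=0$ and for the identification with the Riemannian Laplacian); the paper's weak-Harnack argument needs only that $\Delta_g$ is uniformly elliptic in non-divergence form and therefore goes through already for locally flat hyperhermitian metrics, the default setting of Section~\ref{C0}. Since every main theorem in the paper is stated under the flat hyperk\"ahler assumption, your proof suffices for all downstream applications here.
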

	\begin{proof} 
		The proof can be found in \cite[Lemma 5]{GZ}, for convenience of the reader we shall also sketch it here. Take an open cover of $ M $ made of coordinate balls $ B_{2r_i}(x_i) $ such that $ \{ B_i=B_{r_i}(x_i) \} $ still covers $ M $. Set $\Psi=\psi-\sup_{M}\psi$ for simplicity. By the elliptic inequality \eqref{weak subharmonic} we can apply the weak Harnack inequality \cite[Theorem 9.22]{GT} deducing
		\begin{equation}\label{weak Harnack}
			\|\Psi\|_{L^p(B_i)}\leq C\left( \inf_{B_i}(-\Psi)+1 \right)
		\end{equation}	
		where $ p,C>0 $ depend only on the cover and the background metric. Since $ \Psi $ is non-positive there is at least one coordinate ball $B_j$ such that $ \inf_{B_j}(-\Psi)=0 $, and thus $ \|\Psi\|_{L^p(B_j)}\leq C $. The bound on $ \|\Psi\|_{L^p(B_j)} $ also gives a bound for $ \inf_{B_i}(-\Psi) $ on all coordinate balls intersecting $B_j$. Using again \eqref{weak Harnack} and repeating the argument we obtain an upper bound on each ball of the cover.
	\end{proof}
	
	\begin{lemma}\label{lower bound unbounded case}
		If there exists a non-increasing function $\Phi\in C^1([0,T),\R)$ satisfying
		\begin{equation*}\label{normalized condition}
			\sup_M \left( \phi(\cdot,t)-\underline{\phi}(\cdot,t)-\Phi(t)\right)\geq 0\,,  
		\end{equation*}
		then there exists a constant $C>0$, depending only on $\Omega, g, \underline{\phi}, \|\phi_0\|_{C^0}$ such that
		\begin{equation*}\label{estimate of unbounded}
			\phi(x,t)-\underline \phi (x,t) - \Phi(t) \geq - C \qquad \text{for all }(x,t)\in M\times [0,T)\,.
		\end{equation*}
	\end{lemma}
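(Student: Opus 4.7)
The plan is to adapt Phong--T\^o's proof of \cite[Lemma 1]{Phong-To} to the hyperhermitian setting, using Tso's parabolic ABP inequality as the key tool. Set $v=\phi-\underline{\phi}-\Phi$ and let
\[
L:=-\inf_{M\times[0,T)}\bigl(\phi-\underline\phi-\Phi\bigr),
\]
so the goal is to show $L\le C$ for a uniform $C$. Fix a time $t_0\in [0,T)$ and a point $x_0\in M$ where $v(\cdot,t_0)$ is close to its infimum. By the standing assumption $\sup_M v(\cdot,t_0)\ge 0$, the oscillation of $v(\cdot,t_0)$ is at least (roughly) $L$, so there is a definite contribution towards a non-trivial minimum.

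I would then pass to quaternionic coordinates $(q^1,\dots,q^n)$ centred at $x_0$ and introduce the perturbed function
\[
w(q,t)=v(q,t)+\epsilon\,|q-q_0|^2,
\]
on a small parabolic cylinder $Q_r=B_r(q_0)\times[t_0-r^2,t_0]$. For suitable choices of $r$ (fixed, small) and $\epsilon$ (comparable to $L/r^2$), the boundary values of $w$ on the parabolic boundary $\partial_p Q_r$ strictly exceed $w(q_0,t_0)$, so the minimum of $w$ on $\overline{Q_r}$ is attained at an interior point. At such interior minima one has $D^2 w\ge 0$ and $\partial_t w\le 0$, hence $\mathrm{Hess}_\H v\le -c\epsilon\,\mathrm{Id}$ and $\partial_t v\le 0$ in an appropriate quantitative sense.

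The next step is to invoke Tso's parabolic ABP (\cite[Proposition 2.1]{Tso}) for $w$ on $Q_r$, viewed as a real function of $4n$ real variables. This yields an estimate of the schematic form
\[
\bigl(-\inf_{Q_r}w\bigr)^{4n+1}\le C\iint_{E}(-\partial_t w)\,\det(D^2 w)\,dV\,dt,
\]
where $E\subset Q_r$ is the contact set of the concave envelope of $w$. The key point is to bound the integrand on $E$ using the parabolic $\mathcal{C}$-subsolution property of $\underline{\phi}$. Writing $\lambda(A[\phi])=\lambda(A[\underline\phi])+\mu$ and $\partial_t\phi-\partial_t\underline\phi=-\tau+\Phi'(t)$, the flow equation $\partial_t\phi=F(A[\phi])-h$ together with the relation defining $\tau$ places us in the hypothesis of Definition \ref{Def}. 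On the contact set one checks (using $D^2w\ge 0$ and $\Phi'\le 0$) that $\mu+\delta\mathbf{1}\in\Gamma_n$ and $\tau>-\delta$, so the cone condition forces $|\mu|+|\tau|\le R$. This bounds $\det(D^2 w)$ and $-\partial_t w$ uniformly on $E$, whence the integral above is bounded by $C|Q_r|$.

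Combining the two inequalities yields $L^{4n+1}\le C$, proving the lemma. The main obstacle is the passage between the real Hessian $D^2 w$, which is what Tso's ABP controls, and the hyperhermitian matrix $(w_{\bar r s})$, which governs the flow through $A[\phi]$; concretely one needs a quantitative comparison showing that $D^2 w\ge 0$ forces $\mathrm{Hess}_\H w\ge 0$ with an explicit dependence, so that the eigenvalue perturbation $\mu$ lies in the correct cone. Once this linear-algebra translation is made (in flat quaternionic coordinates this is straightforward, since $4\,\mathrm{Hess}_\H w$ is the $(0,2)$-trace of the real Hessian against the $(I,J,K)$-structure), the rest of the argument is a verbatim transcription of \cite[Lemma 1]{Phong-To}.
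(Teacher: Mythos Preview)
Your outline follows the right template, but there is a genuine gap in the closing step, and it stems from the choice of the quadratic coefficient. You take $\epsilon$ ``comparable to $L/r^2$'' so that the gap between the parabolic boundary and the interior minimum of $w$ is of size $\sim L$, and then hope Tso's ABP yields $L^{4n+1}\le C\,|Q_r|$. But this choice of $\epsilon$ destroys the very step you need to bound the integrand: on the contact set $D^2w\ge 0$ only gives $\mathrm{Hess}_\H v\ge -c\,\epsilon\,\mathrm{Id}$, so the eigenvalue shift $\mu$ can be as negative as $-c\,\epsilon\sim -cL$. The cone requirement $\mu+\delta\mathbf{1}\in\Gamma_n$ in Definition~\ref{Def} therefore \emph{fails} once $L$ is large, and the $\mathcal{C}$-subsolution gives you no bound on $\det(D^2w)$ or $-\partial_t w$. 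In other words, the two inequalities you ``combine'' are incompatible: to use the subsolution you must keep $\epsilon\le\delta$ fixed, and then ABP gives no power of $L$ on the left.

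What is missing is a second mechanism that turns the fixed-$\epsilon$ ABP output into a bound on $L$. In the paper the perturbation is $w=v+\tfrac{\delta^2}{4}|x|^2+(t-t_0)^2$ with $\delta$ the constant from Definition~\ref{Def}; Tso's inequality then yields only a uniform \emph{lower} bound $C_0\delta^{8n+2}\le C\,\mathrm{Vol}(P)$ on the volume of the contact set. The bound on $L$ comes from an independent $L^p$ estimate: since $\Gamma\subseteq\Gamma_1$ one has $\Delta_g v\ge -C$, so the weak Harnack inequality (Lemma~\ref{wsubh}) gives $\|v-\sup_M v\|_{L^p(M)}\le C$ uniformly in $t$. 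On $P$ one has $v-\sup_M v\le w<S+\tfrac{\delta^2}{4}$, and here the hypothesis $\sup_M v\ge 0$ is actually used to conclude $|S+\tfrac{\delta^2}{4}|^p\,\mathrm{Vol}(P)\le C$. Combining this upper bound on $\mathrm{Vol}(P)$ with the ABP lower bound gives the estimate on $S$. Your write-up never invokes $\Delta_g v\ge -C$ or the weak Harnack step, and without it the argument does not close. (A smaller issue: without a $(t-t_0)^2$ term your $w$ need not have a strict gap on the bottom of the parabolic cylinder, so the hypothesis of Tso's inequality may fail there as well.)
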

	
	\begin{proof}
		First, observe that the requirement $\Phi' \leq 0$ implies that $\underline \phi +\Phi$ is still a parabolic $\mathcal{C}$-subsolution of \eqref{eq_main}, therefore, as long as the involved constants do not depend on the time derivative of $\underline{\phi}$, we may assume $\Phi\equiv 0$.
		
		Choose $\delta\in (0,1) $ and $R>0$ such that \eqref{cone condition} holds for the subsolution $\underline \phi$. By \eqref{local c0}, it  suffices to estimate $v=\varphi-\underline{\varphi}$ on $M\times[\delta,T).$ Fix an arbitrary $T'< T$ and assume $v $ achieves its minimum $S$ at a point $(x_{0},t_{0})\in M\times [\delta,T']$, i.e.,
		\begin{equation*}
			S=v(x_{0},t_{0})=\min_{M\times [\delta,T']}v\,. 
		\end{equation*}
		Now we are reduced to prove that if $\sup_{M}v\geq 0$ for all $t\in [\delta,T']$, then $S$ is bounded from below by a constant depending only on $\Omega, g, \underline{\phi}, \|\phi_0\|$ and independent of $T'$.	
		
		Consider quaternionic local coordinates $ (q^1,\dots,q^{n}) $ centered at the point $x_{0}$. We may identify such coordinate neighborhood with the open ball of unit radius $ B_1=B_1(0)\subseteq \H^{n} $ centered at the origin. Let 
		\[ w(x,t)=v(x,t)+\frac{\delta^{2}}{4}|x|^2 +(t-t_0)^{2},\]
		be a function defined on $\mathcal{B}= B_1\times [t_{0}-\frac{\delta}{2},t_{0}+\frac{\delta}{2}] $. Observe that $ \inf_{\mathcal{B}} w=w(0,t_{0})=v(0,t_{0})=S $ and $ \inf_{\partial \mathcal{B}}w\geq w(0,t_{0})+\frac{\delta^{2}}{4} $. These conditions allow us to apply the parabolic ABP method of Tso \cite[Proposition 2.1]{Tso} to obtain
		\begin{equation}
			\label{eq_in_C0_3}
			C_0\delta^{8n+2}\leq \int_P |\partial_{t}w| \det(D^2w)\,,
		\end{equation}
		where $ C_0>0 $ is a dimensional constant,
		\[
		P=\left\{(x,t)\in \mathcal{B}\ \Bigg| \	\begin{aligned}& w(x,t)\leq S+\frac{\delta^{2}}{4},\quad |Dw(x,t)|<\frac{\delta^{2}}{8},\\ 
			w(y,s) & \geq w(x,t)+Dw(x,t)\cdot (y-x), \, \forall y\in B_{1},\quad s\leq t \end{aligned}\right\}
		\]
		is the parabolic contact set of $w$	on $\mathcal{B}$ and $ Dw $, $ D^2w $ are the gradient and the (real) Hessian of $ w $ on $M$ with respect to the variable $x$. 
		\medskip
		~\\
		\textbf{Claim:} both $|\partial_{t}w|$ and $\det(D^{2}w)$ are bounded on $P$.
		\medskip			
		
		Let
		$\tau=-\partial_{t}\varphi+\partial_{t}\underline{\varphi}=-\partial_{t}v$ and $\mu=\lambda(A[\varphi])-\lambda(A[\underline{\varphi}])$. Observe that $D^{2}w\geq 0$ and $\partial_{t}w\leq 0$ on $P$. Thus, 
		\[
		\tau=-\partial_{t}w+2(t-t_{0})\geq -\delta\,, \qquad \mu+\delta \textbf{1}\in \Gamma_{n}\,.
		\]
		Now by Definition \ref{Def} we conclude that $|\tau|+|\mu|\leq R$, then $|\partial_{t}w|\leq R$ and $\mathrm{Hess}_\H w$ is a bounded matrix. But then we are done as we have
		\[
		\det(D^2w) \leq 2^{4n}\det(\mathrm{Hess}_\H w)^4\,\quad \text{on} \ P,
		\]
		which follows from a computation in \cite{Blocki} and \cite[Lemma 2]{Sro}, (see also the proof of \cite[Proposition 2.1]{Alesker-Shelukhin (2013)}). Here, on the right-hand side, ``$ \det $'' denotes the Moore determinant, introduced in \cite{Moore} (see also e.g. \cite{Alesker (2003),Aslaksen,SThesis}). This confirms  the claim.
		\medskip	
		
		With this claim at hand, by \eqref{eq_in_C0_3} we have
		\begin{equation}\label{abp-1}
			C_0\delta^{8n+2}\leq C \mathrm{Vol}(P)\,.
		\end{equation}
		From \eqref{Gamma in Gamma1} we readily obtain $ \Re\,\mathrm{tr}_g(\Omega_\phi)>0 $, where $ \Omega_\phi =\Omega+\partial \partial_J\phi $, which in turn yields a uniform lower bound for the quaternionic Laplacian of $ \phi $:
		\begin{equation*}
			\Delta_g \phi=\Re\, \mathrm{tr}_g(\Omega_\phi)-\Re\,\mathrm{tr}_g(\Omega) \geq -C\,.
		\end{equation*}
		This also gives a uniform lower bound for $\Delta_{g}v$. Then by 
		Lemma \ref{wsubh}, we see that
		\begin{equation}\label{Lp}
			\|v-\sup_{M}v\|_{L^p(M)}\leq C.
		\end{equation}
		The definition of $ P $ and our assumption that $\sup_{M}v\geq 0$ on $[0,T)$ yields
		\[
		v-\sup_{M}v\leq v\leq w<S+\frac{\delta^{2}}{4} \quad \text{on} \ P,
		\]
		We may further assume $S+\frac{\delta^{2}}{4}<0$, otherwise we are done.
		As a consequence for any $ p>0 $
		\[ 
		\left \lvert S+\frac{\delta^{2}}{4}\right \rvert^p\mathrm{Vol}(P)\leq \int_{P}|v-\sup_{M}v|^{p}dx dt\leq \int_{[t_{0}-\frac{\delta}{2},t_{0}+\frac{\delta}{2}]}\|v-\sup_{M}v\|^{p}_{L^p(M)}dt\leq C\delta\,,
		\]
		where we have used \eqref{Lp}. This, together with \eqref{abp-1}, gives the uniform lower bound of $S$ we were after.
	\end{proof}
	
	\medskip
	
	\subsection{Bounds on $\varphi$ and $\tilde{\phi}$.}\hfill\\
	\vskip-0.4em
	
	As it often happens for solutions to flows, we only manage to control the oscillation and not the full $C^0$ norm. On the other hand, once the oscillation is under control, we immediately achieve the $C^0$ estimate for the normalization of the solution.
	\begin{proposition}\label{c0 estimate}
		Let $f$ be either bounded or unbounded. In case $f$ is bounded on $\Gamma$ assume that it satisfies either one of the two conditions expressed in Theorem \ref{bounded theorem}. Then there exists a uniform constant $C>0$, depending only on the background data such that
		\begin{equation}\label{bound osc}
			\osc_M\phi(\cdot,t):=\sup_M \phi(\cdot,t)-\inf_M \phi(\cdot,t) \leq C\,,
		\end{equation}
		and
		\begin{equation}\label{C0 bound tilde phi}
			\|\tilde{\phi}\|_{C^0}\leq C\,.
		\end{equation}
	\end{proposition}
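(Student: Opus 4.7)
To prove Proposition \ref{c0 estimate}, note first that \eqref{C0 bound tilde phi} is a consequence of \eqref{bound osc}: since $\tilde\phi$ differs from $\phi$ by a function of $t$ alone, $\osc_M\tilde\phi(\cdot,t)=\osc_M\phi(\cdot,t)$, and since $\tilde\phi(\cdot,t)$ has zero average with respect to $\Omega_0^n\wedge\bar\Omega_0^n$, its $C^0$ norm is controlled by its oscillation. The real task is thus \eqref{bound osc}, and the natural tool is Lemma \ref{lower bound unbounded case} applied to $v:=\phi-\underline\phi$, for which one needs a non-increasing $\Phi$ with $\sup_M(v-\Phi)\geq 0$. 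The cleanest way to guarantee this is to arrange that $v$ itself is non-increasing in $t$, in which case $\sup_M v(\cdot,t)$ is a non-increasing function of $t$ and can play the role of $\Phi$.

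In the unbounded case, the main point is the correct choice of subsolution: I would take the time-dependent ansatz
\[
\underline\phi(x,t):=\phi_0(x)+at,\qquad a:=\sup_M\bigl(F(A[\phi_0])-h\bigr).
\]
Since $A[\underline\phi]=A[\phi_0]\in\Gamma$, $\underline\phi$ is $\Gamma$-admissible, and a direct verification of Definition \ref{Def} (using the unboundedness of $f$ exactly as in the characterization lemma of Section \ref{pre}) shows that $\underline\phi$ is a parabolic $\mathcal{C}$-subsolution with $t$-uniform constants. Lemma \ref{Bounds on partial-t} then yields $\partial_t\underline\phi=a\geq\partial_t\phi$, so $v$ is pointwise non-increasing in $t$. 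In the bounded case (i) the given $\underline\phi$ satisfies the same monotonicity directly, by hypothesis \eqref{case-1} together with Lemma \ref{Bounds on partial-t}.

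In these two situations I would take $\Phi(t):=\sup_M v(\cdot,t)$, which is Lipschitz and non-increasing; a standard mollification in time produces $C^1$ non-increasing approximations $\Phi_\epsilon\leq\Phi$ with $\|\Phi-\Phi_\epsilon\|_\infty\to 0$ (convolution with a non-negative mollifier preserves monotonicity). Applying Lemma \ref{lower bound unbounded case} with $\Phi_\epsilon$ and letting $\epsilon\to 0$ delivers $v(x,t)\geq\sup_M v(\cdot,t)-C$, so $\osc_M v\leq C$; combined with the uniform bound $\osc_M\underline\phi(\cdot,t)\leq C$ (automatic from the bounded spatial derivatives of $\underline\phi$), this produces \eqref{bound osc}. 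In the bounded case (ii) the hypothesis directly provides a non-increasing $\Phi$ and the first inequality of \eqref{case-2}, so Lemma \ref{lower bound unbounded case} gives $\inf_M(\phi(\cdot,t)-\Phi(t))\geq -C$; inserting this into the second inequality of \eqref{case-2} yields $\sup_M(\phi(\cdot,t)-\Phi(t))\leq C$, and subtracting produces \eqref{bound osc}.

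I expect the main obstacle to be the identification of the pair $(\underline\phi,\Phi)$ in the unbounded case. The time-dependent subsolution $\phi_0+at$ requires a direct check of Definition \ref{Def} since its $C^0$ norm grows in $t$ and the characterization lemma does not apply verbatim, and $\Phi=\sup_M v$ is only Lipschitz rather than $C^1$ as requested by Lemma \ref{lower bound unbounded case}. Both points are settled by routine arguments, but the choice of making $v$ non-increasing is what allows the proof to close uniformly in $t$.
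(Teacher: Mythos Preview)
Your argument is correct, but it differs from the paper's in the unbounded case and in bounded case~(i). The paper does \emph{not} invoke Lemma~\ref{lower bound unbounded case} there at all; instead it freezes time, rewrites the flow as the elliptic equation $F(A[\phi(\cdot,t)])=h+\partial_t\phi(\cdot,t)$ with right-hand side uniformly bounded by Lemma~\ref{Bounds on partial-t}, and observes that $\phi_0$ (resp.\ the given $\underline\phi$, once $\partial_t\underline\phi\geq\partial_t\phi$) is an \emph{elliptic} $\mathcal{C}$-subsolution for this equation. It then quotes the elliptic oscillation estimate \cite[Proposition~6]{GZ} directly. Lemma~\ref{lower bound unbounded case} is used only in bounded case~(ii), exactly as you do.

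Your route---building the time-dependent subsolution $\phi_0+at$, exploiting $\partial_t v\le 0$ to take $\Phi=\sup_M v$, mollifying, and feeding everything into Lemma~\ref{lower bound unbounded case}---is self-contained within the parabolic framework and avoids the external reference to \cite{GZ}. The cost is the extra housekeeping you already flagged: checking Definition~\ref{Def} by hand for a function whose $C^0$ norm grows, and smoothing $\Phi$; both are genuinely routine since the $\mathcal{C}$-subsolution constants $(\delta,R)$ for $\phi_0+at$ are time-independent and the constant in Lemma~\ref{lower bound unbounded case} does not depend on $\Phi$. The paper's approach is shorter precisely because the elliptic $C^0$ estimate is already available from the companion paper, so reducing to it sidesteps all of that.
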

	\begin{proof}
		First, we observe that \eqref{C0 bound tilde phi} follows from \eqref{bound osc}. Indeed, by the normalization of $\tilde \phi $, for any $(x,t)\in M\times [0,T)$ we can find $y(x)\in M$ such that $\tilde{\phi}(y(x),t)=0$, therefore
		\[
		\|\tilde{\phi}\|_{C^0}=\sup_{(x,t)\in M} \big |\tilde{\phi}(x,t)-\tilde{\phi}(y(x),t) \big |=\sup_{(x,t)\in M} \big|\phi(x,t)-\phi(y(x),t)\big|\leq \osc_M \phi(\cdot,t)\,.
		\]
		
		We will prove \eqref{bound osc} by rewriting the flow \eqref{eq_main} as
		\begin{equation}\label{proof osc}
			F(A[\phi])=h+\partial_t \phi\,,
		\end{equation}
		and interpreting it for every fixed time as an elliptic equation with datum $h+\partial_t \phi$. We split the argument into two cases according as $f$ is bounded or unbounded.
		\begin{itemize}
			\item Case 1. $f$ is unbounded on $\Gamma$. In this case any $\Gamma$-admissible function is a parabolic $\mathcal{C}$-subsolution, therefore we can take the initial datum $\phi_0$ as such. Since $\phi_0$ is time-independent, it can be regarded as an elliptic $\mathcal{C}$-subsolution. Furthermore, by Lemma \ref{Bounds on partial-t} we know that the right-hand side of \eqref{proof osc} is uniformly bounded, therefore we may apply \cite[Proposition 6]{GZ} to obtain \eqref{bound osc}.
			\item Case 2. $f$ is bounded on $\Gamma$. We consider two subcases. Assume that condition (i) of Theorem \ref{unbounded theorem} holds, then \eqref{case-1} and Lemma \ref{Bounds on partial-t} imply that $\partial_t \underline \phi \geq \partial_t \phi$, this entails that $\underline \phi$ is a $\mathcal{C}$-subsolution of \eqref{proof osc} in the elliptic sense. Again \eqref{bound osc} follows from \cite[Proposition 6]{GZ}. If, instead, condition (ii) of Theorem \ref{unbounded theorem} is satisfied, then there exists $\Phi\in C^\infty([0,T),\R)$ with $\Phi'\leq 0$ satisfying \eqref{case-2} and we can readily apply Lemma \ref{lower bound unbounded case} to conclude. \qedhere
		\end{itemize}
	\end{proof}		
	
	\smallskip

	\section{Quaternionic Laplacian estimate}\label{Laplacian}
	Here we adopt the technique of \cite{Chou-Wang,Hou-Ma-Wu} which allows to find a Laplacian bound in terms of the norm of the gradient.
	\medskip
	
	Before we tackle the proof, we recall the following preliminary lemma given in Phong-T\^o \cite[Lemma 3]{Phong-To}, which was inspired by the elliptic version of \cite[Proposition 6]{Sze}. 
	We will use the following derivatives of $F$
	\[
	F^{rs}:=\frac{\partial F}{\partial A_{rs}}\,, \qquad F^{rs,lt}:=\frac{\partial^2 F}{\partial A_{rs}\partial A_{lt}}\,.
	\]
	
	\begin{lemma}\label{pre C2}
		Let $\delta,R$ be uniform constants such that on $M\times[0,T)$, if $(\mu,\tau)\in \R^n\times \R $ satisfy \eqref{cone condition}, then $|\mu|+|\tau|<R$. There exists a uniform constant $ \kappa>0 $ depending on $\delta $ and $ R $  such that if $  |\lambda(A[\varphi])-\lambda(A[\underline{\varphi}])|>R $, we have 
		\begin{align*}
			&\mbox{either }&& \Re\, F^{rs}(A[\varphi])\left( A_{rs}[\underline{\varphi}]-A_{rs}[\varphi]\right)-(\partial_{t}\underline{\varphi}-\partial_{t}\varphi)>\kappa \sum_{r=1}^n F^{rr}(A[\varphi])\,,\\
			&\mbox{or }&& F^{ss}(A[\varphi])>\kappa \sum_{r=1}^n F^{rr}(A[\varphi])\,, \qquad \text{for all } s=1,\dots,n\,.
		\end{align*}
	\end{lemma}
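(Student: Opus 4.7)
The plan is to adapt Phong-T\^o's proof of \cite[Lemma 3]{Phong-To} (itself a parabolic reformulation of Sz\'ekelyhidi's elliptic Proposition 6 in \cite{Sze}) to the hyperhermitian setting. What must be verified is that the linear-algebraic ingredients, originally stated for Hermitian matrices, transfer to hyperhermitian matrices; this is essentially automatic since the concavity of the map $A\mapsto f(\lambda(A))$ on the space of hyperhermitian matrices, the spectral decomposition, and the trace inner product all behave in parallel to the Hermitian case. I would work pointwise at $(x_{0},t_{0})\in M\times [0,T)$ where $|\lambda(A[\varphi])-\lambda(A[\underline{\varphi}])|>R$, setting $\mu=\lambda(A[\varphi])-\lambda(A[\underline{\varphi}])$ for a convenient pairing of eigenvalues (permitted by the symmetry of $f$) and $\tau=\partial_{t}\underline{\varphi}-\partial_{t}\varphi$. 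Since $\varphi$ solves \eqref{eq_main}, the identity $f(\lambda(A[\underline{\varphi}])+\mu)-\partial_{t}\underline{\varphi}+\tau=h$ holds automatically at $(x_{0},t_{0})$, so the contrapositive of Definition \ref{Def} yields the dichotomy: either some coordinate $\mu_{i_{0}}\leq -\delta$ or $\tau\leq -\delta$.

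The key algebraic input is the concavity of $F$ as a function on hyperhermitian matrices, which produces
\[
\Re\,F^{rs}(A[\varphi])\bigl(A_{rs}[\underline{\varphi}]-A_{rs}[\varphi]\bigr)\;\geq\; f(\lambda(A[\underline{\varphi}]))-f(\lambda(A[\varphi]))\,.
\]
I would then diagonalize $A[\varphi]$ at $(x_{0},t_{0})$ with eigenvalues $\lambda_{1}\leq\cdots\leq \lambda_{n}$, so that $F^{rs}(A[\varphi])$ becomes diagonal with entries $f_{i}\geq 0$ ordered $f_{1}\geq\cdots \geq f_{n}$, the reverse ordering being forced by the concavity and symmetry of $f$ in C1.

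In the case $\tau\leq -\delta$, the concavity bound above combined with $-\tau\geq \delta$ delivers a uniform lower bound on the left-hand side of the first alternative; a careful inspection then shows, as in \cite[Lemma 3]{Phong-To}, that this lower bound can be arranged to be proportional to $\sum_{r}F^{rr}(A[\varphi])$, yielding the first conclusion for a suitable $\kappa>0$ depending only on $\delta$ and $R$. In the case $\mu_{i_{0}}\leq -\delta$, the dichotomy must be propagated to every index $s$: the ordering $f_{1}\geq\cdots\geq f_{n}$ lets one compare any $f_{s}$ to the sum $\sum_{r}f_{r}$, and the fact that the largest eigenvalue $\lambda_{n}$ cannot be arbitrarily big without triggering the first alternative forces $f_{n}\geq \kappa\sum_{r}f_{r}$, hence $F^{ss}(A[\varphi])>\kappa\sum_{r}F^{rr}(A[\varphi])$ for every $s$. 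I expect the main technical obstacle to be the quantitative absorption of the lower bound in $\kappa \sum_{r}F^{rr}$, together with the careful tracking of the permutation pairing $\lambda(A[\varphi])$ with $\lambda(A[\underline{\varphi}])$; both steps are handled in \cite[Lemma 3]{Phong-To}, and their transfer to the hyperhermitian setting is immediate given the concavity of $F$ noted above.
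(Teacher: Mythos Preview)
Your overall strategy---reduce to \cite[Lemma 3]{Phong-To} and check that the linear algebra transfers to hyperhermitian matrices---is exactly what the paper does. However, you misidentify the ingredient that must be verified for the transfer. The paper singles out the \emph{quaternionic Schur--Horn theorem} (\cite[Lemma 8]{GZ}): after diagonalizing $A[\varphi]$ the first alternative reads $\sum_i f_i\bigl(A_{ii}[\underline{\varphi}]-\lambda_i\bigr)-\tau>\kappa\mathcal F$, and one must control the diagonal entries $A_{ii}[\underline{\varphi}]$ (taken in that basis) in terms of the eigenvalues of $A[\underline{\varphi}]$. Schur--Horn, combined with the rearrangement inequality, does exactly this, and it is what the Phong--T\^o argument actually uses.

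Your proposed substitute, the concavity bound $\Re\,F^{rs}(A[\varphi])\bigl(A_{rs}[\underline{\varphi}]-A_{rs}[\varphi]\bigr)\geq f(\lambda(A[\underline{\varphi}]))-f(\lambda(A[\varphi]))$, is correct but too weak: both $f$-values on the right are uniformly bounded (by the $C^{1,1}$ control on $\underline{\varphi}$ and Lemma~\ref{Bounds on partial-t}), so you obtain only a constant lower bound, not one proportional to $\mathcal F=\sum_r F^{rr}$, which is in general unbounded above. Hence neither your treatment of the case $\tau\leq -\delta$ nor of the case $\mu_{i_0}\leq -\delta$ can close. Relatedly, the dichotomy you extract from Definition~\ref{Def} is not how the two alternatives are separated in \cite{Phong-To}: one assumes the second alternative fails for some index and then proves the first, rather than splitting on which of $\mu_{i_0}\leq-\delta$ or $\tau\leq-\delta$ holds.
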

	
	\begin{proof}
		Since the quaternionic analogue of the Schur-Horn theorem holds, see e.g. \cite[Lemma 8]{GZ}, the proof of the lemma can be adapted from \cite[Lemma 3]{Phong-To}.
	\end{proof}
	
	\begin{proposition} \label{prop_C2-bound}
		Suppose $ (M,I,J,K,g) $ is a compact flat hyperk\"ahler manifold. Then there is a constant $ C>0 $, depending only on $ (M,I,J,K) $, $ \|g\|_{C^2} $, $ \|h\|_{C^2} $, $ \|\Omega \|_{C^2} $, $\|\underline{\phi}\|_{C^{1,1}} $, $\|\partial_t \phi \|_{C^0}$ and $ \|\tilde \phi \|_{C^0} $, such that
		\[
		\|\Delta_g \phi \|_{C^0}\leq C\left( \|\nabla \phi \|_{C^0}+1 \right)\,.
		\]
	\end{proposition}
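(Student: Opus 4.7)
The plan is to follow the standard Chou--Wang/Hou--Ma--Wu maximum principle method, adapted to the parabolic hyperhermitian setting in the spirit of Phong--T\^o. I would study the auxiliary function
\[
Q(x,t) = \log \lambda_1(A[\phi]) + \alpha\,\psi(|\nabla \phi|^2) - \beta(\phi - \underline \phi),
\]
where $\lambda_1$ denotes the largest eigenvalue of $A[\phi]$, $\psi$ is a convex non-decreasing function to be specified (e.g.\ $\psi(s) = -\tfrac{1}{2}\log(K-s)$ for $K > \sup|\nabla\phi|^2+1$), and $\alpha,\beta>0$ are uniform constants to be chosen later. By compactness and the short-time estimate \eqref{local c0} it suffices to control $Q$ at its spacetime maximum $(x_0,t_0)\in M\times[\delta,T')$ for arbitrary $T'<T$. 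Around $x_0$ I would pick quaternionic coordinates in which $(g_{\bar r s})$ is the identity and $A^r_s[\phi](x_0,t_0)$ is diagonal with ordered eigenvalues $\lambda_1\geq\cdots\geq\lambda_n$; the non-smoothness of $\lambda_1$ at eigenvalue coincidences is handled by the usual trick of replacing $\log\lambda_1$ with $\log A^1_1[\phi]$ in these fixed coordinates, affecting only lower-order terms.

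At $(x_0,t_0)$ the parabolic maximum principle gives $(\partial_t-\mathcal{L})Q\geq 0$ with $\mathcal{L}(v)=\frac{1}{4}\Re\bigl(F^{rs}(A[\phi])\,\partial_{\bar q^r}\partial_{q^s}v\bigr)$. Expanding each summand and differentiating the flow once and twice along $\partial_{\bar q^1}\partial_{q^1}$ produces: a non-positive third-order term which is dropped by concavity of $f$; a good positive gradient contribution from $\mathcal{L}(\alpha\psi(|\nabla\phi|^2))$; and the subsolution term $\beta\,\mathcal{L}(\underline\phi-\phi)$, which, via the flow equation, rewrites as
\[
\beta\,\bigl[\,\Re\,F^{rs}\bigl(A_{rs}[\underline\phi]-A_{rs}[\phi]\bigr)-\bigl(\partial_t\underline\phi-\partial_t\phi\bigr)\,\bigr].
\]
The dichotomy of Lemma \ref{pre C2} then splits into two cases. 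If the bracketed quantity exceeds $\kappa\sum_r F^{rr}$, then picking $\beta$ sufficiently large absorbs all error terms proportional to $\sum_r F^{rr}$. Otherwise $F^{ss}\geq\kappa\sum_r F^{rr}$ for every $s$, in which case the positive term $F^{11}\lambda_1^{-2}|\partial_{q^1}A^1_1|^2$ generated by $\mathcal{L}(\log A^1_1)$ is comparable to the full trace and can, via Cauchy--Schwarz, absorb the residual gradient terms. Balancing $\alpha$ and $\beta$ leads to $\lambda_1(x_0,t_0)\leq C(1+\|\nabla\phi\|_{C^0})$; since $\Gamma$-admissibility combined with \eqref{Gamma in Gamma1} provides $\Delta_g\phi\geq -C$, while $\Delta_g\phi\leq n\lambda_1+C$, the stated estimate follows.

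The main obstacle is the control of the third-order terms produced when one commutes $\partial_{\bar q^1}\partial_{q^1}$ past the Cauchy--Riemann--Fueter operators encoding $A[\phi]$: on a general hyperhermitian manifold such commutations would generate curvature or torsion contributions that cannot be bounded a priori and would ruin the linear-in-gradient dependence of the final estimate. The local flatness assumption on $(M,I,J,K,g)$ ensures that $\partial_{\bar q^r}$ and $\partial_{q^s}$ commute strictly and that the coefficients $g_{\bar r s}$ are locally constant, so all such stray terms vanish. This is where the flat hyperk\"ahler hypothesis enters decisively, allowing the argument to proceed in complete parallel with the Hermitian case of Phong--T\^o.
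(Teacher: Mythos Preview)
Your overall plan---a Hou--Ma--Wu barrier with a gradient term and a subsolution term, resolved by the Phong--T\^o dichotomy of Lemma~\ref{pre C2}, with flatness killing the commutator errors---is exactly the paper's strategy. But your description of Case~2 contains a genuine error that would prevent the argument from closing. You assert that $\mathcal{L}(\log A^1_1)$ produces a \emph{positive} term $F^{11}\lambda_1^{-2}|\partial_{q^1}A^1_1|^2$; in fact the concavity of the logarithm gives this third-order contribution with a \emph{negative} sign, since at the diagonal point $\mathcal{L}(\log A^1_1)=F^{aa}(A^1_1)_{\bar aa}/\lambda_1-F^{aa}|(A^1_1)_a|^2/\lambda_1^2$. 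This is a bad term to be absorbed, not a good one doing the absorbing. The actual engine of Case~2, in the paper and in every Hou--Ma--Wu argument, is the quadratic good term $\tfrac{1}{N}F^{aa}\lambda_a^2$ produced by the gradient barrier $\alpha(|\nabla\phi|^2)$ via $F^{aa}\sum_{r,p}|\phi_{rx^a_p}|^2\geq F^{aa}\phi_{\bar aa}^2$; once $F^{11}\geq\kappa\mathcal{F}$ this dominates $\kappa\mathcal{F}\lambda_1^2/N$ and forces $\lambda_1^2\leq CN$. Your sketch never mentions this term, and without it there is nothing to balance against the errors.

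Two secondary gaps concern the subsolution term. The paper works with the \emph{normalized} difference $\tilde v$ rather than $\phi-\underline\phi$, because only $\|\tilde\phi\|_{C^0}$ is controlled (the unnormalized $\phi$ may drift linearly in time), and it takes $\beta(s)=-2Ss+\tfrac12 s^2$ rather than a linear function. The strict convexity $\beta''=1$ is not cosmetic: it is precisely what absorbs the error $(\beta')^2F^{aa}|\tilde v_a|^2$ that appears when the first-order critical-point condition is substituted back into the $\alpha''$ term (see the passage from \eqref{eq_in_C2_3} to \eqref{eq_in_C2_5}). With a purely linear $-\beta(\phi-\underline\phi)$ this absorption mechanism is unavailable, and your sketch offers no alternative.
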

	\begin{proof}
		By \eqref{Gamma in Gamma1} we already know that the quaternionic Laplacian is uniformly bounded from below, therefore it is enough to obtain a bound of the form
		\[
		\frac{\lambda_1}{\|\nabla \phi\|_{C^0}+1}\leq C\,,
		\]
		where $\lambda_1$ is the largest eigenvalue of $A[\phi]$. Let $T'<T$, all computations will be performed in quaternionic local coordinates around some fixed point $p_0=(x_0,t_0)\in M\times [0,T']$ which we will specify in a moment. As pointed out by Sz\'{e}kelyhidi \cite{Sze} in order for $ \lambda_1\colon M\to \R $ to define a smooth function at $ p_0 $ we need the eigenvalues to be distinct; to be sure of that, we perturb the matrix $ A $ as follows. Using the assumption that $g$ is a flat hyperk\"ahler metric we may take quaternionic coordinates such that $(g_{\bar r s})$ is the identity in the whole neighborhood of $p_0$  and $ (\Omega^{\phi}_{\bar rs}) $ is diagonal at $p_0$. In particular $ A[\phi] $ is diagonal with ordered eigenvalues $\lambda_1\geq \lambda_2\geq \dots \geq \lambda_n $. Let $ D $ be a constant diagonal matrix with entries satisfying $0=D_{11}<D_{22}<\dots<D_{nn}$. The matrix $ \tilde A=A[\phi]-D $ has distinct eigenvalues $\tilde \lambda_r$ by construction, and its largest eigenvalue $\tilde \lambda_1$ coincides with $\lambda_1$ at $p_0$.
		
		Choose $p_0\in M \times [0,T']$ to be a maximum point of the function
		\[
		\hat G=2\sqrt{\lambda_1}+\alpha(|\nabla \phi|^2)+\beta(\tilde v)
		\]
		where 
		\begin{align*}
			\alpha(s)&=-\frac{1}{2}\log \left( 1- \frac{s}{2N} \right)\,,& N&= \|\nabla \phi\|^2_{C^0}+1\,,\\
			\beta(s)&=-2Ss+\frac{1}{2}s^2\,,& S&>\|\tilde v\|_{C^0}\,, \text{ large constant to be chosen later}\,,
		\end{align*}
		and $\tilde v$ is the normalization of $v=\phi- \underline \phi$. As said, to avoid smoothness issues we shall not work with $\lambda_1$. Therefore, in a small neighborhood of $p_0$, instead of working with $\hat G$ we consider the function
		\[
		G=2\sqrt{\tilde \lambda_1}+\alpha(|\nabla \phi|^2)+\beta(\tilde v)\,.
		\]
		It will be useful to observe that
		\begin{align}\label{eq_alpha}
			\frac{1}{4N}&<\alpha'(|\nabla \phi|^2)<\frac{1}{2N}\,, & \alpha''&=2(\alpha')^2\,,\\
			\label{eq_beta}
			S&\leq -\beta'(\tilde v)\leq 3S\,, & \beta''&=1\,.
		\end{align}
		We also remark that, as in \cite{Phong-To}, at the point $p_0$ there exists a constant $ \tau>0 $ depending only on $ \|h\|_{C^0} $ and $\|\partial_t \phi\|_{C^0}$ such that
		\begin{equation*}
			\mathcal{F}:=\sum_{a=1}^n F^{aa}(A[\phi])=\sum_{a=1}^n f_{a}(\lambda(A[\phi]))>\tau\,.
		\end{equation*} \
		Indeed, for any $\sigma\in (\sup_{\partial\Gamma}f,\sup_{\Gamma}f)$ and by \cite{Sze}, there exists a constant $\tau'>0$ depending only on $\sigma$ such that $\sum_{a} f_{a}(\lambda)>\tau'$ for any $\lambda\in \partial\Gamma^{\sigma}$. Now for $f(\lambda(A[\phi]))=\partial_{t}\varphi+h$, $\sigma$ lies a compact set bounded by  $\|h\|_{C^0}+\|\partial_t \phi\|_{C^0}$ and whence $\mathcal{F}$ is bounded below by some $\tau$. This will be useful to absorb some constants during our computations.	
		
		The linearized operator $ L $ is defined by
		\[
		L(u)=4\sum_{a,b=1}^n F^{ab}g^{\bar c a}u_{\bar c b}-4\partial_{t}u\,,
		\]
		where $ u_{\bar a b}=\frac{1}{4}\partial_{\bar q^a}\partial_{q^b}u $. In particular, at $p_0$ the linearized operator has the simpler expression $L(u)=4(F^{aa}u_{\bar a a}-\partial_{t}u)$. We emphasize here that the terms $F^{aa}$ are real, a fact that we shall use in all the computations to come.
		
		At the maximum point $ p_0 $ we have $ L(G)\leq 0 $ i.e.
		\begin{equation}\label{eq_in_C2}
			0\geq L\Big(2\sqrt{\tilde \lambda_1}\Big)+L\Big(\alpha(|\nabla \phi|^2)\Big)+L\Big(\beta(\tilde v) \Big)\,.
		\end{equation}
		
		\medskip
		
		\subsection{Bound for $ L(2\sqrt{\tilde \lambda_1}) $.}\hfill\\
		\vskip-0.4em
		
		We claim that 
		\begin{equation}\label{eq bound L}
			L\Big(2\sqrt{\tilde \lambda_1}\Big)\geq -\frac{F^{aa}|\Omega^\phi_{\bar 1 1,a}|^2}{2\lambda_1\sqrt{\lambda_1}}-\frac{C\mathcal{F}}{\sqrt{\lambda_1}}\,,
		\end{equation}
		where $ \Omega^\phi_{\bar 1 1,a}=\partial_{q^a}\Omega^\phi_{\bar 1 1} $ and $ C>0 $ is a positive uniform constant.
		
		We clearly have 
		\begin{equation}\label{eq_pre_C2_2}
			\begin{split}
				L\Big(2\sqrt{\tilde \lambda_1}\Big)&=8F^{aa}\Big(\sqrt{\tilde \lambda_1}\Big)_{\bar a a}-8\partial_{t}\Big(\sqrt{\tilde \lambda_1}\Big)\\&=2F^{aa}\sum_{p=0}^3\Big(\sqrt{\tilde \lambda_1}\Big)_{x^a_px^a_p}-8\partial_{t}\Big(\sqrt{\tilde \lambda_1}\Big)\\
				&=\frac{1}{\sqrt{\lambda_1}}\Big(F^{aa}\sum_{p=1}^{3} \tilde \lambda_{1,x^a_px^a_p}-4\partial_{t}\tilde{\lambda}_{1}\Big)-F^{aa}
				\sum_{p=0}^3\frac{|\lambda_{1,x^a_p}|^{2}}{2\lambda_1\sqrt{\lambda_1}}  \,,
			\end{split}
		\end{equation}
		where the subscripts $ x^a_p $ denote the real derivative with respect to the corresponding real coordinates underlying the chosen quaternionic local coordinates. Using the formulas for the derivatives of the eigenvalues (see \cite{GZ}) and the fact that $ D $ is a constant matrix we obtain at $ p_0 $
		\begin{align*}
			\tilde
			\lambda_{1,x^a_p}&=\tilde
			\lambda_1^{rs}\tilde A_{rs,x^a_p}=\Omega^\phi_{\bar 1 1, x^a_p}\\
			\tilde
			\lambda_{1,x^a_px^a_p}&=\tilde \lambda_{1}^{rs,lt} \tilde A_{rs,x^a_p}\tilde A_{lt,x^a_p}+\tilde
			\lambda_{1}^{rs}\tilde A_{rs,x^a_px^a_p}=2 \sum_{r>1}\frac{|\Omega^\phi_{\bar r1,x^a_p}|^2}{\lambda_1-\tilde \lambda_r}+\Omega^\phi_{\bar 1 1, x^a_px^a_p}\,.
		\end{align*}
		Observe that
		\[
		\begin{split}
			\sum_{p=0}^3\Omega^\phi_{\bar 1 1, x^a_px^a_p}&=\sum_{p=0}^3\left(\Omega_{\bar 1 1, x^a_px^a_p}+\phi_{\bar 1 1 x^a_px^a_p}\right)=4\Omega_{\bar 1 1, \bar a a}+4\phi_{\bar a a \bar 1 1}\\
			&=4\Omega_{\bar 1 1, \bar a a}-4\Omega_{\bar a a, \bar 1 1}+\sum_{p=0}^3\Omega^\phi_{\bar a a, x^1_p x^1_p}    
		\end{split}
		\]
		which implies
		\[
		F^{aa}\tilde \lambda_{1,x^a_px^a_p}\geq F^{aa}\sum_{p=0}^3\Omega^\phi_{\bar a a, x^1_p x^1_p}-C\mathcal{F}\,.
		\]
		Differentiating the equation $\partial_{t}\varphi= F(A[\phi])-h $ twice with respect to $ x^1_p $ gives, at $ p_0 $,
		\begin{equation*}
			F^{rs,tl}\Omega^\phi_{\bar r s,x^1_p}\Omega^\phi_{\bar t l,x^1_p}+F^{aa} \Omega^\phi_{\bar a a,x^1_px^1_p}=h_{x^1_px^1_p}+\partial_{t}(\varphi_{x^1_px^1_p})\,.
		\end{equation*}
		by this and the concavity of $ F $
		\begin{equation}\label{eq bound L1}
			F^{aa}\sum_{p=0}^3\tilde \lambda_{1,x^a_px^a_p}-4\partial_{t}\tilde{\lambda}_{1}\geq \sum_{p=0}^3 \left(F^{aa}\Omega^\phi_{\bar a a, x^1_px^1_p}-\partial_{t}(\varphi_{x^1_px^1_p})\right)-C\mathcal{F}
			\geq -C\mathcal{F}\,.    
		\end{equation}
		Substituting \eqref{eq bound L1} into \eqref{eq_pre_C2_2} we obtained the claimed inequality \eqref{eq bound L}.
		
		\medskip
		
		\subsection{Bound for $ L\left(\alpha(|\nabla \phi|^2)\right) $.}\hfill\\
		\vskip-0.4em
		
		First of all, since $|\nabla \phi |^2=\sum_r \phi_{\bar r} \phi_r$ is real, we may compute
		\begin{equation}\label{eq bound Lalpha}
			\begin{split}
				L\left(\alpha(|\nabla \phi|^2)\right)=&\,\alpha''F^{aa}\sum_{p=0}^3\left(\sum_{r=1}^n(\phi_{\bar r x^a_p}\phi_{r}+\phi_{\bar r}\phi_{rx^a_p})\right)^2\\
				&+\alpha'F^{aa}\sum_{p=0}^3\sum_{r=1}^n\left(\phi_{\bar rx^a_px^a_p}\phi_{r}+2|\phi_{r x^a_p}|^2+\phi_{\bar r}\phi_{rx^a_px^a_p}\right)\\
				&-\alpha'\sum_{r=1}^n\Big(\partial_{t}(\phi_{\bar r})\phi_{r}+\phi_{\bar r}\partial_{t}(\phi_{r})\Big)\,.
			\end{split}
		\end{equation}
		Differentiating the equation $ \partial_{t}\varphi=F(A[\varphi])-h $ yields
		\[
		\partial_{t}(\varphi_{x^r_p})=F^{aa}\Omega^\phi_{\bar aa,x^r_p}-h_{x^r_p}\,, \qquad \text{at }p_0\,.
		\]
		Together with Cauchy-Schwarz inequality and \eqref{eq_alpha} this yields
		\[
		\begin{split}
			&\, \alpha'F^{aa}\sum_{r=1}^n(\phi_{\bar r \bar a a}\phi_r+\phi_{\bar r}\phi_{r\bar a a})-\alpha'\sum_{r=1}^n\Big(\partial_{t}(\phi_{\bar r})\phi_{r}+\phi_{\bar r}\partial_{t}(\phi_{r})\Big)\\
			&=\alpha'\sum_{r=1}^n\left((h_{\bar r}-F^{aa}\Omega_{\bar aa, \bar r})\phi_r+\phi_{\bar r}(h_{r}-F^{aa}\Omega_{\bar aa, r})\right)\\
			&\geq -\frac{C}{N}(N^{1/2}+N^{1/2}\mathcal{F})\geq -C\mathcal{F}\,,
		\end{split}
		\]
		Moreover, we have
		\[
		\begin{split}
			2\alpha'F^{aa}\sum_{r=1}^n\sum_{p=0}^3|\phi_{r x^a_p}|^2
			&
			\geq \frac{1}{2N}F^{aa}\sum_{p=0}^3\phi_{x^a_p x^a_p}^2=\frac{8}{N}F^{aa}\phi_{\bar a a}^2
			\\&=\frac{8}{N}F^{aa}(\lambda_a-\Omega_{\bar aa})^2\geq \frac{4}{N}F^{aa}\lambda_a^2-C\mathcal{F}\,.
		\end{split}
		\]
		
		Combining the last two inequalities with \eqref{eq bound Lalpha} we get
		\begin{equation}\label{bound alpha}
			L\left(\alpha(|\nabla \phi|^2)\right)\geq \alpha''F^{aa}\sum_{p=0}^3\left(\sum_{r=1}^n(\phi_{\bar r x^a_p}\phi_{r}+\phi_{\bar r}\phi_{rx^a_p})\right)^2+\frac{4}{N}F^{aa}\lambda_a^2-C\mathcal{F}\,.
		\end{equation}

		\subsection{Conclusion of the proof.}\hfill\\
		\vskip-0.4em
		
		In view of \eqref{eq bound L} and \eqref{bound alpha},	the main inequality \eqref{eq_in_C2} becomes
		\begin{equation}\label{eq_in_C2_2}
			\begin{split}
				0\geq \,\,&\alpha''F^{aa}\sum_{p=0}^3\left(2\sum_{r=1}^n\Re(\phi_{\bar r x^a_p}\phi_{r})\right)^2-\frac{F^{aa}|\Omega^\phi_{\bar 1 1,a}|^2}{2\lambda_1\sqrt{\lambda_1}}\\
				&+\frac{4F^{aa}\lambda_a^2}{N}+L\left(\beta(\tilde v) \right)-C\mathcal{F}
			\end{split}
		\end{equation}
		
		Since $p_0$ is a maximum point for $G$ we have
		\[
		0=G_{x^a_p}=\frac{\Omega^\phi_{\bar 1 1,x^a_p}}{\sqrt{\lambda_1}}+2\alpha'\sum_{r=1}^n\Re(\phi_{\bar r x^a_p}\phi_{r})+\beta'\tilde v_{x^a_p}
		\]
		and therefore, by \eqref{eq_alpha}
		\begin{equation}\label{eq_in_C2_3}
			\begin{split}
				\alpha'' F^{aa}\left(2\sum_{r=1}^n\Re(\phi_{\bar r x^a_p}\phi_{r})\right)^2&=2 F^{aa}\left( \frac{\Omega^\phi_{\bar 1 1,x^a_p}}{\sqrt{\lambda_1}}+\beta'\tilde v_{x^a_p}\right)^2\\
				&\geq 2\epsilon \frac{F^{aa}(\Omega^\phi_{\bar 1 1,x^a_p})^2}{\lambda_1}-\frac{2\epsilon }{1-\epsilon}(\beta')^2F^{aa}\tilde v_{x^a_p}^2\,,
			\end{split}
		\end{equation}
		where we used the inequality $ (a+b)^2\geq \epsilon a^2-\frac{\epsilon}{1-\epsilon}b^2\,, $ which holds for $ \epsilon\in (0,1) $. Assuming without loss of generality that $ \sqrt{\lambda_1}>\frac{1}{4\epsilon} $ we get 
		\begin{equation}\label{eq_in_C2_4}
			\left(4\epsilon\sqrt{\lambda_1}-1\right)\frac{F^{aa}|\Omega^\phi_{\bar 1 1,a}|^2}{2\lambda_1\sqrt{\lambda_1}} \geq 0\,.
		\end{equation}
		Putting together \eqref{eq_in_C2_3}, \eqref{eq_in_C2_4} and the calculation
		\[
		L\left( \beta(\tilde{v})\right)=\beta'' F^{aa}|\tilde{v}_{a}|^2+4\beta'F^{aa}\tilde{v}_{\bar a a}-4\beta'\partial_t \tilde{v}
		\]
		\eqref{eq_in_C2_2} simplifies to
		\[
		0\geq \frac{4F^{aa}\lambda_a^2}{N}+\left(\beta''-\frac{2\epsilon}{1-\epsilon}(\beta')^2\right) F^{aa}|\tilde{v}_{a}|^2+4\beta'\left(F^{aa}\tilde{v}_{\bar a a}-\partial_t \tilde{v}\right)-C\mathcal{F}\,.
		\]
		If we choose $ \epsilon=1/(18S^2+1)<1 $, then \eqref{eq_beta} yields
		\[
		\beta''-\frac{2\epsilon}{1-\epsilon}(\beta')^2\geq 0\,,
		\]
		therefore we finally arrive at
		\begin{equation}\label{eq_in_C2_5}
			0\geq \frac{4F^{aa}\lambda_a^2}{N}+4\beta'\left(F^{aa}\tilde{v}_{\bar a a}-\partial_t \tilde{v}\right)-C\mathcal{F}\,.
		\end{equation}
		Supposing $ \lambda_1>R $ we have $ |\lambda(A[\phi])|>R $ and we can then apply Lemma \ref{pre C2} according to which there exists $ \kappa>0 $ such that one of the following two cases occur:		
		\begin{itemize}
			\item
			Case 1: 
			$$
			\Re\, F^{rs}(A[\varphi])\left( A_{rs}[\underline{\varphi}]-A_{rs}[\varphi]\right)-(\partial_{t}\underline{\varphi}-\partial_{t}\varphi)>\kappa \sum_{r=1}^n F^{rr}(A[\phi])\,,
			$$
			i.e. $ -F^{aa} v_{\bar a a}+\partial_t v >\kappa \mathcal{F} $ at $ p_0 $, where we recall that $v=\phi-\underline \phi$. This immediately gives
			\[
			F^{aa} \tilde v_{\bar a a}-\partial_t \tilde v <-C_1 \mathcal{F}
			\]
			where $C_1$ depends on $\|\partial_t v\|_{C^0}$. Choosing $ S $ so large as to have $ \beta'(F^{aa}\tilde{v}_{\bar a a}-\partial_t \tilde{v})\geq C\mathcal{F} $ we deduce from \eqref{eq_in_C2_5} the inequality $ 0\geq \frac{4}{N}F^{aa}\lambda_a^2 $ which is a contradiction, hence this case cannot occur.
			
			\vspace{0.1cm}
			\item Case 2:
			\begin{equation*}
				F^{ss}(A[\phi])>\kappa \sum_{r=1}^n F^{rr}(A[\phi])\,, \qquad \text{for all } s=1,\dots,n\,,
			\end{equation*}
			which in particular gives $ F^{11}>\kappa \mathcal{F} $ and thus $ F^{aa}\lambda_a^2\geq F^{11}\lambda_1^2 \geq \kappa \mathcal{F}\lambda_1^2 $. We may assume $ F^{aa}\lambda_a\leq F^{aa}\lambda_a^2/(6NS) $ because if this were not true we would have $ \kappa \mathcal{F}\lambda_1^2<6NS\mathcal{F}\lambda_1 $		
			\[
			4\beta'\left(F^{aa}\tilde{v}_{\bar a a}-\partial_t \tilde{v}\right)\geq -12SF^{aa}\phi_{\bar a a}-C\mathcal{F}\geq-\frac{2F^{aa}\lambda_a^2}{N} -C\mathcal{F}\,,
			\]
			This last inequality and \eqref{eq_in_C2_5} finally give
			\[
			0\geq 2\kappa \frac{\lambda_1^2}{N}-C\,,
			\]
			as was to be shown.
		\end{itemize}
		The desired bound is valid at the maximum point $ x_0 $ of $ G $, and then also globally.
	\end{proof}		
	
	\begin{remark}
		As in the elliptic case treated in \cite{GZ}, this is the only step of the proof of our main results that uses the assumption that the metric $ g $ is hyperk\"ahler.
	\end{remark}

	\section{Gradient estimate}\label{Grad}
	The bound find in the previous section is well-suited for a standard interpolation argument which allows us to obtain directly a gradient bound and consequently, also a Laplacian bound. We thank the anonymous referee for pointing out this proof to us, which simplifies our previous one.
	\medskip

	\begin{proposition}\label{gradient}
		Suppose there is a uniform constant $C$ such that
		\[
		\|\phi\|_{C^0}\leq C\,, \qquad \|\Delta_g \phi \|_{C^0}\leq C\left( \|\nabla \phi \|_{C^0}+1 \right)\,,
		\]
		then there is a uniform bound
		\[
		\|\phi \|_{C^1} \leq C\,.
		\]
	\end{proposition}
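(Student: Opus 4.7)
The plan is to follow the referee's suggestion and run a standard interpolation argument, combining the $L^p$ theory for the elliptic operator $\Delta_g$ with a Gagliardo-Nirenberg inequality. Set $\Lambda := \|\nabla \phi\|_{C^0}$, so that the hypothesis of the proposition reads $\|\Delta_g \phi\|_{C^0} \leq C(\Lambda + 1)$, while $\|\phi\|_{C^0}$ is already controlled.

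Since $\Delta_g$ is a second-order uniformly elliptic operator with smooth coefficients on the compact manifold $M$ (in local quaternionic coordinates for a flat hyperhermitian metric it is just a constant multiple of the Riemannian Laplacian, and in general it is elliptic with smooth coefficients, so the standard Calderón-Zygmund theory applies), for any $p \in (1,\infty)$ the classical $L^p$ estimate gives
\[
\|\phi\|_{W^{2,p}(M)} \leq C_p \left( \|\Delta_g \phi\|_{L^p(M)} + \|\phi\|_{L^p(M)} \right) \leq C_p (\Lambda + 1).
\]

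Next, I would apply the Gagliardo-Nirenberg interpolation inequality on the compact Riemannian manifold $(M,g)$ of real dimension $4n$. For any $p > 4n$, there exists
\[
\theta = \frac{p}{2p - 4n} \in \left(\tfrac{1}{2},1\right)
\]
and a uniform constant $C>0$ such that
\[
\|\nabla \phi\|_{C^0(M)} \leq C \|\phi\|_{W^{2,p}(M)}^{\theta} \|\phi\|_{C^0(M)}^{1-\theta}.
\]
Equivalently, one can deduce this by combining the Sobolev embedding $W^{2,p}(M) \hookrightarrow C^{1,\alpha}(M)$ with $\alpha = 1 - 4n/p$, and then interpolating in Hölder scale between $C^0$ and $C^{1,\alpha}$.

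Combining the two displays and using the uniform $C^0$ bound on $\phi$, we obtain
\[
\Lambda \leq C(\Lambda + 1)^{\theta}.
\]
Since $\theta$ is strictly less than $1$, a standard Young-type absorption argument yields $\Lambda \leq C$, which together with the hypothesis $\|\phi\|_{C^0}\leq C$ is the desired $C^1$ bound. There is no real obstacle in the argument; the only subtlety is to fix $p$ large enough that $\theta < 1$, so that the resulting inequality can be absorbed, and the fact that both the $L^p$ regularity estimate and Gagliardo-Nirenberg are classical tools on compact Riemannian manifolds.
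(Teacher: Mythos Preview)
Your proof is correct and follows essentially the same interpolation route as the paper: the paper interpolates $\|\phi\|_{C^1}$ between $\|\phi\|_{C^0}$ and $\|\phi\|_{C^{1,\alpha}}$ with an $\epsilon$-weight, bounds $\|\phi\|_{C^{1,\alpha}}$ via Morrey's inequality and elliptic $L^p$ estimates with $p=4n/(1-\alpha)>4n$, and then absorbs by choosing $\epsilon$ small, which is exactly your Gagliardo--Nirenberg step (as you yourself note in the ``equivalently'' remark) combined with your Young-type absorption from $\Lambda\leq C(\Lambda+1)^\theta$ with $\theta<1$.
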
		
	\begin{proof}
		Interpolation theory (see \cite[section 6.8]{GT}) reveals that for any $ \epsilon>0 $ and  $ 0<\alpha<1 $ there is a constant $ C_\epsilon>0 $ such that
		\[
		\norma{\phi}_{C^1}\leq C_\epsilon \norma{\phi}_{C^0}+\epsilon\norma{\phi}_{C^{1,\alpha}}\leq C_\epsilon C+\epsilon \|\phi\|_{C^{1,\alpha}}\,.
		\]
		Choosing $ p=\frac{4n}{1-\alpha}>4n $ Morrey's inequality and elliptic $ L^p $-estimates for the Laplacian yield 
		\[
		\norma{\phi}_{C^{1,\alpha}}\leq C' \norma{\phi}_{W^{2,p}}\leq C''\left( \norma{\phi}_{L^p}+\norma{\Delta_g \phi}_{L^p} \right)\leq C''\left(\norma{\phi}_{C_0}+\norma{\Delta_g \phi}_{C^0}\right)
		\]
		for some constants $ C',C''>0 $ depending only on $ \alpha $.
		
		Putting everything together, we obtain
		\[
		\norma{\phi}_{C^1}\leq C_\epsilon C+\epsilon C''C\left( 2+\norma{\phi}_{C^1} \right)\,, 
		\]
		from which we can conclude by choosing $\epsilon<(C''C)^{-1} $.
	\end{proof}

\section{Higher order estimates and long-time existence}\label{higher}
Here we improve the Laplacian estimate to a H\"older estimate of the quaternionic Hessian of $\phi$. We do so by following an argument of Alesker \cite{Alesker (2013)} suitably adapted to our parabolic framework. By bootstrapping we then obtain estimates of any order on the solution of \eqref{eq_main} and thus also long-time existence.
\medskip

\begin{proposition}\label{higher order}
	For each $k>0$, there exists a uniform constant $C_{k}$ depending on the allowed data, $k$, $\|\nabla \phi \|_{C^0}$ and an upper bound for $\Delta_g \phi$ such that
	\begin{equation}\label{6..1}
		\|\nabla^{k}\varphi\|_{C^0}\leq C_{k}\,,
	\end{equation}
	where $\nabla$ is the Levi-Civita connection with respect to $g$. Moreover we have long-time existence for $\phi$, i.e. $T=\infty$.
\end{proposition}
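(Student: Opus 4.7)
The plan is to turn the a priori estimates already obtained into a full $C^{\infty}$ bound via a parabolic Evans-Krylov step followed by a standard bootstrap, and then use continuation to deduce $T=\infty$. First I would combine Propositions \ref{c0 estimate}, \ref{prop_C2-bound} and \ref{gradient} to obtain uniform bounds on $\|\phi\|_{C^0}$, $\|\nabla\phi\|_{C^0}$ and hence on $\|\Delta_g\phi\|_{C^0}$. Writing the flow as $f(\lambda(A[\phi]))=\partial_t\phi+h$, Lemma \ref{Bounds on partial-t} shows that the right-hand side is uniformly bounded. Together with C2) this keeps $\lambda(A[\phi])$ away from $\partial\Gamma$ along the level sets of $f$, while the Laplacian bound controls $\mathrm{Re}\,\mathrm{tr}_g(\Omega^\phi)=\sum_i\lambda_i(A[\phi])$. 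Standard convex cone reasoning then places $\lambda(A[\phi])$ in a compact subset of $\Gamma$, which in particular forces all entries of the hyperhermitian matrix $(\Omega^\phi_{\bar r s})$ to be bounded, giving a uniform $C^{1,1}$ estimate for $\phi$.

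Once this is in place, C1) makes the flow uniformly parabolic with constants depending only on the background data. The main step is then to invoke a parabolic Evans-Krylov--type theorem, using the concavity of $F$ built into C1), to produce a uniform $C^{2,\alpha}$ estimate for some $\alpha\in(0,1)$. This is the most delicate point, and I would handle it in the spirit of Alesker \cite{Alesker (2013)}: passing to the underlying $4n$-dimensional real coordinates on the locally flat hyperhermitian manifold, $f$ being symmetric and concave in the eigenvalues of $A[\phi]$ translates into concavity of the fully non-linear operator in the real Hessian variables, and the classical parabolic Evans-Krylov theorem applies to yield interior H\"older bounds on $\partial_t\phi$ and $\nabla^2\phi$. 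A covering argument gives the global estimate on $M\times[0,T)$.

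With the $C^{2,\alpha}$ estimate at hand, higher order estimates \eqref{6..1} follow by a routine bootstrap. Differentiating \eqref{eq_main} once with respect to a real coordinate $x^a_p$ produces a linear uniformly parabolic equation for $\partial_{x^a_p}\phi$ whose coefficients $F^{rs}(A[\phi])$ are of class $C^{\alpha}$, so parabolic Schauder estimates upgrade the bound to $\|\phi\|_{C^{3,\alpha}}\leq C$. Iterating this procedure, one more derivative at a time, yields $\|\nabla^k\phi\|_{C^0}\leq C_k$ for every $k\geq 1$.

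For long-time existence I would argue by contradiction. Suppose $T<\infty$; then the uniform estimates of any order just proved imply that $\phi(\cdot,t)$ converges smoothly as $t\to T^-$ to a $\Gamma$-admissible function $\phi_T\in C^\infty(M,\R)$. Short-time existence for fully non-linear parabolic equations of the form \eqref{eq_main}, applied with initial datum $\phi_T$, then extends $\phi$ smoothly past $T$, contradicting the maximality of $T$. The principal obstacle in this whole scheme is the Evans-Krylov step, since it is the only place where the quaternionic structure interacts non-trivially with a classical theorem from real fully non-linear PDE theory; all remaining steps are standard once uniform parabolicity and concavity are established.
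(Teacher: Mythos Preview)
There is a genuine gap in the Evans--Krylov step. You claim that bounding the entries of the hyperhermitian matrix $(\Omega^\phi_{\bar r s})$ gives ``a uniform $C^{1,1}$ estimate for $\phi$'', but this is false: the quaternionic Hessian $\phi_{\bar r s}=\tfrac14\partial_{\bar q^r}\partial_{q^s}\phi$ only records certain linear combinations of the real second derivatives (for instance $4\phi_{\bar a a}=\sum_{p=0}^3\phi_{x^a_p x^a_p}$), so a bound on $\mathrm{Hess}_\H\phi$ does \emph{not} bound the full real Hessian $D^2\phi$. Without a genuine real $C^{1,1}$ bound, the classical parabolic Evans--Krylov theorem for $u_t=G(D^2u,x)$ does not apply in the form you invoke; its constants depend on $\|D^2u\|_{C^0}$, which you do not have. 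This is precisely the obstruction that already arises in the complex case, and it is the reason Alesker's argument in \cite{Alesker (2013)} is \emph{not} a direct application of real Evans--Krylov, contrary to how you describe it.

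What the paper does instead (following Alesker and B\l{}ocki) is to bypass the missing real $C^{1,1}$ bound altogether. One uses an algebraic lemma to write the coefficient matrix $(F^{rs}(U))$ as $\sum_{k=1}^N\beta_k(\xi^k)^*\otimes\xi^k$ for finitely many fixed unit vectors $\xi^k\in\H^n$, so that the concavity inequality becomes a statement about the \emph{quaternionic} directional Laplacians $\Delta_{\xi^k}u=\Re\big((\xi^k)^*(\mathrm{Hess}_\H u)\,\xi^k\big)$, each of which \emph{is} uniformly bounded by the available quaternionic second-order estimate. A parabolic weak Harnack inequality applied to these finitely many functions then yields a H\"older bound for $\mathrm{Hess}_\H\phi$ (not for $D^2\phi$), and that is already enough input for the Schauder bootstrap, since the flow depends on $\phi$ only through $\mathrm{Hess}_\H\phi$. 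Your bootstrap and continuation arguments after that point are essentially correct.
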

\begin{proof}
	Assume \eqref{6..1} and suppose $T<\infty$. It follows form \eqref{up and low of phit} that there exists a uniform constant $C$ such that
	\begin{equation*}
		|\varphi|\leq T\sup_{M\times [0,T)}|\partial_{t}\varphi|\leq CT\,, \quad \textrm{on} \ M\times [0,T)\,.
	\end{equation*}
	By this, \eqref{6..1} and short-time existence, one can extend the flow to $[0, T+\epsilon_{0})$ for some $\epsilon_{0}>0$, which yields a contradiction. The interested reader can find more details about this standard discussion in the proof of \cite[Theorem 3.1]{Tos18} (see also in \cite{SW13,Wei16} and references therein). 
	
	We showed that it is enough to prove \eqref{6..1}.  And we claim that \eqref{6..1}, follows once we have proved a H\"older bound for $\mathrm{Hess}_{\mathbb{H}} \varphi$ of the form
	\begin{equation}\label{eq Holder}
		|{\rm Hess}_\H \phi |_{C^{0,\alpha}(M\times [\epsilon,T))} \leq C_\epsilon\,,
	\end{equation}
	where $\epsilon\in (0,T)$ and $C_\epsilon$ is a uniform constant depending only on the initial data and $\epsilon$. Indeed, given the H\"older bound \eqref{eq Holder} for the matrix $\mathrm{Hess}_{\mathbb{H}} \varphi$ and the second order estimate for $\varphi$, we can differentiate the flow \eqref{eq_main} and then bootstrap using the Schauder estimates in order to obtain the uniform bound
	\[
	|\nabla^k \phi |_{C^{0,\alpha}(M\times[\epsilon,T))}\leq C_{\epsilon,k}\,, \quad \text{for any }k>0\,,
	\]
	where $C_{\epsilon,k}$ depends on $\epsilon $ and $k$. But since by standard parabolic theory the solution $\phi$ is uniquely determined by the initial and background data, we also have a uniform bound
	\[
	|\nabla^k \phi |_{C^{0,\alpha}(M\times[0,\epsilon))}\leq C_{\epsilon,k}\,, \quad \text{for any }k>0\,.
	\]
	
	The estimate \eqref{eq Holder} is standard, we prove it as a separate Proposition below.
\end{proof}

\begin{proposition}\label{Holder}
	For each $\epsilon\in (0,T)$ there exists $\alpha\in (0,1)$ and a uniform constant $C_\epsilon>0$ depending only on the allowed data, $\epsilon$, $\|\partial_t \phi \|_{C^0}$, and an upper bound for $\Delta_g \phi$ such that
	\begin{equation*}
		|{\rm Hess}_\H \phi |_{C^{0,\alpha}(M\times [\epsilon,T))} \leq C_\epsilon\,.
	\end{equation*}
\end{proposition}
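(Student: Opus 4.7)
The plan is to establish a parabolic Evans--Krylov estimate adapted to the quaternionic setting, in the spirit of Alesker's elliptic approach in \cite{Alesker (2013)}.

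First I would assemble uniform parabolicity. Combining Proposition \ref{c0 estimate}, Proposition \ref{prop_C2-bound}, Proposition \ref{gradient} and Lemma \ref{Bounds on partial-t} we have uniform bounds on $\|\phi\|_{C^{1,1}}$ and $\|\partial_t \phi\|_{C^0}$, so that the eigenvalues $\lambda(A[\phi])$ range in a bounded set; together with assumption C2 they are also bounded away from $\partial \Gamma$, hence confined to a compact subset of $\Gamma$. Consequently there exist uniform $0 < \theta \leq \Theta < \infty$ with $\theta\, I \leq (F^{rs}(A[\phi])) \leq \Theta\, I$ on $M\times [0,T)$, i.e.\ \eqref{eq_main} is uniformly parabolic.

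Next I would localize. Cover $M$ by finitely many quaternionic coordinate charts in which the flat hyperk\"ahler metric is Euclidean and ${\rm Hess}_{\mathbb H}\phi$ is a fixed real-linear function of the real Hessian $D^2\phi$. In each chart \eqref{eq_main} takes the form $\partial_t \phi = \widetilde F(D^2\phi, x) - h$, where $\widetilde F$ is the composition of the concave function $F$ with a linear projection and is therefore concave in $D^2\phi$, and uniformly parabolic on the subspace of real Hessians spanned by quaternionic Hessians. The core step is then a parabolic Evans--Krylov theorem in the quaternionic setting: following Alesker's scheme one differentiates the equation twice in a tangential Euclidean direction $\partial_e$, uses concavity of $\widetilde F$ to write a suitable weighted linear combination of quaternionic second derivatives of $\partial_e^2\phi$ as a supersolution of the linearised uniformly parabolic operator $L$, and applies the parabolic Krylov--Safonov weak Harnack inequality on dyadic parabolic cylinders. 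Iterating yields a geometric decay of the oscillation of ${\rm Hess}_{\mathbb H}\phi$, hence an interior H\"older estimate $|{\rm Hess}_{\mathbb H}\phi|_{C^{0,\alpha}(U'\times [\epsilon,T))}\leq C_\epsilon$ on each chart, which is patched together to give the global bound.

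The main obstacle is the parabolic adaptation of Alesker's quaternionic Evans--Krylov argument. Concavity of $\widetilde F$ is available only on the subspace of real Hessians coming from quaternionic ones, so the classical Evans--Krylov argument must be rerouted through quaternionic second differences rather than arbitrary real ones, exactly as in the elliptic case of \cite{Alesker (2013)}. The parabolic Krylov--Safonov machinery being well-established, the adaptation is essentially bookkeeping; still, the linear time term has to be absorbed at each step, and the dependence of the H\"older constant on $\epsilon$, which stems from the interior nature of the parabolic weak Harnack inequality (one needs positive distance from the initial slice $t=0$, where only $C^\infty$ data is available), must be tracked carefully throughout.
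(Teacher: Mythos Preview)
Your proposal is correct and follows essentially the same route as the paper: a parabolic Evans--Krylov argument in the quaternionic setting \`a la Alesker, with uniform parabolicity coming from the earlier a priori estimates, localization in flat quaternionic charts, the parabolic weak Harnack inequality on nested cylinders, and an oscillation-decay iteration. The paper carries this out explicitly via Alesker's algebraic decomposition lemma (writing $(F^{rs})$ as $\sum_k \beta_k\,(\xi^k)^*\otimes\xi^k$ for finitely many unit directions $\xi^k$), reducing matters to the finitely many scalar functions $\Delta_{\xi^k}u$ and $-\partial_t u$, each shown to be a subsolution of the linearized operator; this is precisely the ``rerouting through quaternionic second differences'' you allude to.
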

\begin{proof}
	The  proof is  classical in flavour and represents an adaptation of Alesker's $C^{2,\alpha}$ estimate for the quaternionic Monge-Amp\`ere equation obtained in \cite{Alesker (2013)} and inspired by the argument of B\l{}ocki \cite{Blo05}.
	
	Again the proof is local, since $M$ is locally flat. Let $\mathcal{O}\subset \mathbb{H}^{n}$ be an arbitrary open subset. For each $\alpha\in (0,1)$, on $\mathcal{O}_{T}:=\mathcal{O}\times [0,T)$, we define
	\[
	\big[\varphi\big]_{\alpha,(x,t)}:=\sup_{(y,s)\in \mathcal{O}_{T}\setminus (x,t)}\frac{|\varphi(y,s)-\varphi(x,t)|}{(|y-x|+\sqrt{|s-t|})^{\alpha}}\,,\quad \big[\varphi\big]_{\alpha,\mathcal{O}_{T}}:=\sup_{(x,t)\in\mathcal{O}_{T}}\big[\varphi\big]_{\alpha,(x,t)}\,.
	\]
	The metric $g$ can be locally represented by a potential $w$ on $\mathcal{O}$, possibly shrinking $\mathcal{O}$ if necessary, in other words $g=\mathrm{Hess}_{\mathbb{H}} w$. Let us denote $u=w+\varphi$ and $U=\mathrm{Hess}_{\mathbb{H}} u$. By concavity of $F$, and the mean value theorem, for all $(x,t_{1}), (y,t_{2})\in \mathcal{O}\times [0,T)$, we have
	\begin{equation}\label{osc}
		\begin{split}
			\Re \, F^{rs}(y,t_{2})(u_{\bar{r}s}(x,t_{1})&-u_{\bar{r}s}(y,t_{2})) \\
			&\geq  \partial_{t}\varphi(x,t_{1})-\partial_{t}\varphi(y,t_{2})+h(x)-h(y)\\
			&\geq  \partial_{t}u(x,t_{1})-\partial_{t}u(y,t_{2})-C\|x-y\|\,,
		\end{split}
	\end{equation}
	for some constant $C$ depending on $\|h\|_{C^{1}}$.
	
	At this point we recall the following algebraic lemma by Alesker \cite[Lemma 4.9]{Alesker (2013)}, which is analogous to  \cite{GT} for the real case and  \cite{Blo05,Siu} in the complex setting.
	
	\begin{lemma}
		Let $\lambda,\Lambda\in \mathbb{R}$ satisfy $0<\lambda<\Lambda<+\infty$. There exist a uniform constant $N$, unit vectors $\xi^{1},\dots, \xi^{N}\in \mathbb{H}^{n}$ and positive numbers $\lambda_{*}< \Lambda_{*}<+\infty$, depending only on $n, \lambda, \Lambda$ such that any hyperhermitian matrix $A \in \H^{n,n}$ with eigenvalues lying in the interval $[\lambda, \Lambda]$ can be written as
		\begin{equation*}
			A=\sum_{k=1}^{N}\beta_{k}(\xi^{k})^*\otimes\xi^{k}\,, \qquad \textrm{i.e. } \, A_{rs}=\sum_{k=1}^{N}\beta_{k}\bar \xi^k_{r}\xi^k_{s}\,,
		\end{equation*}
		for some $\beta_{k}\in [\lambda_{*}, \Lambda_{*}]$.
	\end{lemma}
	
	Applying the lemma to $A=(F^{rs}(U))$, immediately yields
	\begin{equation*}
		\begin{split}
			\Re\,	F^{rs}(U(y))(u_{\bar rs}(y)-u_{\bar rs}(x))
			=&\Re \sum_{k=1}^{N}\beta_{k}(y)\bar \xi^k_{r} \xi^{k}_s(u_{\bar r s}(y)-u_{\bar rs}(x))\\	=&\sum_{k=1}^{N}\beta_{k}(y)(\Delta_{\xi^{k}}u(y)-\Delta_{\xi^{k}}u(x))
		\end{split}
	\end{equation*}
	for some functions $\beta_{k}(y)\in [\lambda_{*}, \Lambda_{*}]$, where, for any unit vector $\xi\in \mathbb{H}^{n}$, we denoted by $\Delta_{\xi}$ the Laplacian on any translate of the quaternionic line spanned by $\xi$, i.e.
	\begin{equation*}
		\Re\, \textrm{tr}((\xi^*\otimes\xi)(u_{\bar{r}s}))
		=\Re\, \textrm{tr}(\xi(u_{\bar{r}s})\xi^*)
		=\Delta_{\xi}u\,.
	\end{equation*}
	Here we are using the well-known identity $\Re\, \mathrm{tr}(B_1B_2)=\Re \,\mathrm{tr}(B_2B_1)$ valid for any two quaternionic matrices $B_1,B_2$ for which the product is defined.
	
	For convenience, let us set $\beta_{0}(y)\equiv 1$ and
	$\Delta_{\xi^{0}}=-\partial_t.$
	Then, from \eqref{osc} we obtain
	\begin{equation}\label{sum1}
		\sum_{k=0}^{N}\beta_{k}\big(\Delta_{\xi^{k}}u(y,t_{2})
		-\Delta_{\xi^{k}}u(x,t_{1})\big)\leq C\|x-y\|\,.
	\end{equation}
	
	\begin{lemma}\label{subequation}
		For any $k=0,1,\cdots, N$, 
		\begin{equation*}
			\partial_{t}\Delta_{\xi^{k}}u\leq \Re\, F^{rs}\big(\Delta_{\xi^{k}}u_{\bar{r}s}\big)+\Delta_{\xi^{k}}h\,.
		\end{equation*}
	\end{lemma}
	\begin{proof} For $k=0$. Applying $\partial_{t}$ to \eqref{eq_main}, we get
		\begin{equation*}
			\partial_{t}\big(\partial_{t}u\big)=\Re\, F^{rs}\partial_{t}\big(u_{\bar{r}s}\big)
		\end{equation*}
		and the lemma follows.
		
		For other $k\geq 1$, write $\xi^{k}=(\xi^{k}_{1},\cdots, \xi^{k}_{n})$. Differentiating  \eqref{eq_main} along $\xi_{p}^{k}$ twice and taking sum over the index $p$, gives
		\begin{equation*}
			\begin{split}
				\partial_{t}\Delta_{\xi^{k}}u &=
				\Re\, F^{rs}\big(\Delta_{\xi^{k}}u_{\bar{r}s}\big)
				+\Re \sum_{p=1}^{n}F^{rs,tl}u_{\bar{r}s\xi_{p}^{k}}u_{\bar{t}l\xi_{p}^{k}}-\Delta_{\xi^{k}}h\\
				&\leq \Re \, F^{rs}\big(\Delta_{\xi^{k}}u_{\bar{r}s}\big) -\Delta_{\xi^{k}}h\,,
			\end{split}
		\end{equation*}
		by the concavity of $F$. Then the lemma follows.
	\end{proof}
	
	Fix $\hat{t}\in [\epsilon,T)$, and $r\in (0,1)$ such that $10r^{2}\leq \hat{t}$. Define 
	\[
	\begin{split}
		P_{r}&=\big\{(x,t)\in \mathcal{O}_{T}: \|x\|\leq r, \hat{t}-5r^{2}\leq t\leq \hat{t}-4r^{2}\big\}\,,\\
		Q_{r}&=\big\{(x,t)\in \mathcal{O}_{T}: \|x\|\leq r, \hat{t}-r^{2}\leq t\leq \hat{t}\big\}\,.
	\end{split}
	\]
	For every $k=0,1,\cdots, N$, let us denote
	\[
	M_{k,r}=\sup_{Q_{r}}\Delta_{\xi^{k}}u\,, \quad m_{k,r}=\inf_{Q_{r}}\Delta_{\xi^{k}}u\,, \quad \eta(r)=\sum_{k=1}^{N}(M_{k,r}-m_{k,r})\,.
	\]
	To prove Proposition \ref{Holder}, it suffices to find a constant $C$ (depending only on $\epsilon$), $r_{0}>0$ and $0<\delta<1$ such that
	\begin{equation*}
		\eta(r)\leq Cr^{\delta}\,, \quad \text{for all } r<r_{0}\,.
	\end{equation*}
	
	Let us define an operator $\mathcal{D}=\frac{1}{4}\Re\, F^{rs}(U)\partial_{\bar q^r} \partial_{q^s}$. Let $(a_{ij})\in \text{Sym} (4n,\R)$  be the realization of $(F^{rs}(U))$. Then we can rewrite $\mathcal{D}$ as
	\begin{equation}\label{non-div}
		\mathcal{D}=\sum_{s,t=1}^{4n}a_{st}D_{s}D_{t}\,,
	\end{equation}
	Since $F$ is uniformly elliptic on $\Gamma$, then $(a_{st})\in \mathrm{Sym}(4n,\R)$  satisfies the uniform elliptic estimate
	$\lambda\|\xi\|^{2}\leq \sum_{s,t}a_{st}\xi_{s}\xi_{t}\leq \Lambda \|\xi\|^{2}$ for some $0<\lambda<\Lambda<\infty$ and any $\xi\in \mathbb{R}^{4n}$.
	
	The following weak parabolic Harnack inequality is well-known.
	\begin{lemma}\cite[Theorem 7.37]{Lie}\label{weak harnack}
		If $v\in W_{2n+1}^{2,1}$ is a nonnegative function and satisfies
		\begin{equation*}
			-\frac{\partial v}{\partial t}+ \sum_{s,t}a_{st}D_{s}D_{t}v\leq h' \textrm{ on }Q_{4r}\,,
		\end{equation*}
		where $h'$ is a bounded function and the matrix $(a_{st})$ is as in \eqref{non-div}. Then there exist positive constants $C,p$ depending on $n,\lambda,\Lambda$ such that
		\begin{equation}\label{weak harnack 1}
			\frac{1}{r^{4n+2}}\left(\int_{P_{r}}v^{p}\right)^{\frac{1}{p}}\leq C\left(\inf_{B_{r}}v+r^{\frac{4n}{4n+1}}\|h'\|_{L^{2n+1}}\right).
		\end{equation}
	\end{lemma}
	For each $k=0,1,\cdots, N$, let us denote $v_{k}:=M_{k,2r}-\Delta_{\xi^{k}}u$. Then $v_{k}\in W_{2n+1}^{2,1}$ is a non-negative function and since $\Delta_{\xi^{k}}u_{\bar{r}s}=(\Delta_{\xi^{k}}u)_{\bar{r}s}$ on $\mathcal{O}_{T}$ it satisfies
	\[
	-\partial_{t}v_{k}+\Re\, F^{rs}(v_k)_{\bar rs} \leq h'
	\]
	for a bounded function $h'$.
	Then by Lemmas \ref{subequation} and \ref{weak harnack},
	\begin{equation}\label{weak harnack 2}
		\frac{1}{r^{4n+2}}\Big(\int_{P_{r}}(M_{k,2r}-\Delta_{\xi^{k}}u)^{p}\Big)^{\frac{1}{p}}\leq C\big(M_{k,2r}-M_{k,r}+r^{\frac{4n}{4n+1}}\big)\,,
	\end{equation}
	On the other hand, let $(x,t_{1}),(y,t_{2})\in Q_{2r}$, it then follows  from \eqref{sum1} that
	\begin{equation*}
		\beta_{k}\big(\Delta_{\xi^{k}}u(y,t_{2})-\Delta_{\xi^{k}}u(x,t_{1})\big)
		\leq  Cr+\sum_{\substack{ 0\leq \gamma\leq N \\ \gamma\neq k}}\beta_{\gamma}\big(\Delta_{\xi^{\gamma}}u(x,t_{1})
		-\Delta_{\xi^{\gamma}}u(y,t_{2})\big)\,.
	\end{equation*}
	For each $\epsilon>0$, pick a point $(x,t_{1})\in Q_{2r}$ such that $m_{k,2r}\leq \Delta_{\xi^{k}}u(x,t_{1})+\epsilon.$ As a consequence, after dividing the inequality above by $\beta_{k}$, we obtain
	\begin{equation*}
		\begin{split}
			\Delta_{\xi^{k}}u(y,t_{2})-m_{k,2r}
			\leq Cr+C\sum_{\substack{ 0\leq \gamma\leq N \\ \gamma\neq k}}(M_{\gamma,2r}-\Delta_{\xi^{\gamma}}u(y,t_{2}))\,,
		\end{split}
	\end{equation*}
	by arbitrariness of $\epsilon$. Integrating for $(y,t_{2})$ over $P_{r}$, and using the fundamental inequality $\|a+b\|_{p}\leq \|a\|_{p}+\|b\|_{p}$ for every $p>1$, yields
	\begin{equation}\label{weak harnack 3}
		\begin{split}
			\frac{1}{r^{4n+2}}&\Big(\int_{P_{r}}\big(\Delta_{\xi^{k}}u(y,t_{2})-m_{k,2r}\big)^{p}\Big)^{\frac{1}{p}}\\
			&\leq  \frac{C}{r^{4n+2}}\Big(\int_{P_{r}}\Big[r+\sum_{\substack{0\leq \gamma\leq N \\ \gamma\neq k}}(M_{\gamma,2r}-\Delta_{\xi^{\gamma}}u(y,t_{2}))\Big]^{p}\Big)^{\frac{1}{p}}\\
			&\leq  Cr+\frac{C}{r^{4n+2}}\sum_{\substack{ 0\leq\gamma\leq N \\ \gamma\neq k}}\Big(\int_{P_{r}}[M_{\gamma,2r}-\Delta_{\xi^{\gamma}}u(y,t_{2})]^{p}\Big)^{\frac{1}{p}}\\
			&\stackrel{\eqref{weak harnack 2}}{\leq}  C\sum_{\substack{ 0\leq \gamma\leq N \\ \gamma\neq k}}(M_{\gamma,2r}-M_{\gamma,r})+Cr^{\frac{4n}{4n+1}}\,,
		\end{split}
	\end{equation}
	where we have used the fact $0<r<1$ in the last inequality. In light of \eqref{weak harnack 2} and \eqref{weak harnack 3}, and again the triangle inequality $\|a+b\|_{p}\leq \|a\|_{p}+\|b\|_{p}$, we obtain
	\begin{equation*}
		\begin{split}
			M_{k,2r}-m_{k,2r} \leq\,\,& 
			\frac{C}{r^{4n+2}}\Big(\int_{P_{r}}(M_{k,2r}-\Delta_{\xi^{k}}u)^{p}\Big)^{\frac{1}{p}}\\
			&+\frac{C}{r^{4n+2}}\Big(\int_{P_{r}}(\Delta_{\xi^{k}}u-m_{k,2r})^{p}\Big)^{\frac{1}{p}}\\
			\leq\,\,&   C\sum_{\gamma=0}^{N}(M_{\gamma,2r}-M_{\gamma,r})+Cr^{\frac{4n}{4n+1}}\,.
		\end{split}
	\end{equation*}
	Summing over $k$ we deduce
	\[
	\eta(2r)\leq C\sum_{\gamma=0}^{N}(M_{\gamma,2r}-M_{\gamma,r})+Cr^{\frac{4n}{4n+1}}\,.
	\]
	By definition, $m_{\cdot,s}$ is non-increasing in $s$, whence
	\begin{equation*}
		\begin{split}
			\eta(2r)&\leq C\sum_{\gamma=0}^{N}\big((M_{\gamma,2r}-m_{\gamma,2r})
			-M_{\gamma,r}+m_{\gamma,r}\big)+Cr^{\frac{4n}{4n+1}} \\
			&=  C\big(\eta(2r)-\eta(r)\big)+Cr^{\frac{4n}{4n+1}}\,.
		\end{split}
	\end{equation*}
	Equivalently,
	\begin{equation*}
		\eta(r)\leq \Big(1-\frac{1}{C}\Big)\eta(2r)+Cr^{\frac{4n}{4n+1}}\,.
	\end{equation*}
	Applying a standard iteration technique (see \cite[Chapter 8]{GT} for more details), we finally infer that there exists a dimensional constant $\delta\in (0,1)$ such that $\eta(r)\leq Cr^{\delta}$ as we wanted to show.
	This completes the proof of Proposition \ref{Holder}. 
\end{proof}

\section{Convergence of the flow and proof of Theorems \ref{unbounded theorem} and \ref{bounded theorem}}\label{conv}

\subsection{Li-Yau type inequality}\hfill\\
\vskip-0.4em

Now we consider the following Li-Yau \cite{LY86} type equation
\begin{equation}\label{heat}
	(\mathcal{L}-\partial_{t})\psi=0\,, \quad \psi>0\,,
\end{equation}
where $\mathcal{L}=\frac{1}{4}\Re\, F^{ik}\partial_{\bar q_{i}}\partial_{q_{k}}$.

If $\Phi$ is a $C^2$ function and we let
\begin{equation*}
	\Phi_{k}:=\sum_{p=0}^{3}\Phi_{x_{p}^{k}}\, \bar{e}_{p}, \qquad
	\Phi_{\bar k}:=\sum_{p=0}^{3}e_{p}\Phi_{x_{p}^{k}}\,,
\end{equation*}
where $\Phi_{x_{p}^{k}}:=\frac{\partial\Phi}{\partial x_{p}^{k}}$,  and $\bar{e}_{p}$ denotes the quaternionic conjugate of the quaternionic unit $e_{p}$ for every $p$, then we can rewrite $\mathcal{L}$ as
\begin{equation*}
	\mathcal{L}\Phi=\frac{1}{4}\Re\, F^{ik}\partial_{\bar q_{i}}\partial_{q_{k}}\Phi= F^{ik}_{pq}\,\Phi_{x_{p}^{k}x_{q}^{i}},
\end{equation*}
where $F^{ik}_{pq}:= \frac{1}{4}\Re\, \{F^{ik}\bar{e}_{p}e_{q}\}$  for simplicity. This follows directly from the identity $\Re(ab)=\Re(ba)$ valid for any pair of quaternions $a,b\in \H$.

Let $B$ be a constant so large that $\psi=\partial_t \phi +B$ is a solution to \eqref{heat}. We consider the quantity
\[
H=t(|\partial v|^{2}-\alpha \partial_{t} v)\,, \qquad v=\log \psi\, ,
\]
where $\alpha\in (1,2)$ is a constant and
$$
|\partial v|^{2}=\frac{1}{4}\Re\, F^{jl}v_{j}v_{\bar{l}}=F^{jl}_{rs}\,v_{x_{r}^{l}}v_{x_{s}^{j}}\,.
$$

\begin{lemma}\label{Lemma heat1} There exists a constant $C>0$ such that
	\begin{equation}\label{heat1}
		(\mathcal{L}-\partial_{t})H\geq 
		\frac{t}{4n}\big(|\partial v|^{2}- \partial_{t} v\big)^{2}
		-2\langle\partial v,{\partial}H \rangle
		-\big(|\partial v|^{2}-\alpha \partial_{t} v \big)
		-tC|\partial v|^{2}-Ct\,,
	\end{equation}
	where $\langle\cdot,\cdot\rangle $ is the inner product defined by $\langle \partial f,\partial g \rangle=\frac14\Re \,F^{ik}f_{i}g_{\bar{k}}=F^{ik}_{pq}\,f_{x_{p}^{k}}g_{x_{q}^{i}}$ on real-valued $C^1$ functions.
\end{lemma}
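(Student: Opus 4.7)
I want to argue in the classical Li--Yau style, adapted to the uniformly elliptic real operator $\mathcal{L}=\sum_{s,t=1}^{4n}a_{st}D_sD_t$ of \eqref{non-div}. The first step is to produce an evolution identity for $v=\log\psi$. Writing $\psi=e^v$ and applying the Leibniz/chain rule to this composition (which is legitimate because $v$ is real-valued and $\mathcal{L}$ is a real second-order operator) gives
\[
\mathcal{L}\psi=e^v\bigl(\mathcal{L}v+|\partial v|^2\bigr),\qquad \partial_t\psi=e^v\partial_tv,
\]
so from $(\mathcal{L}-\partial_t)\psi=0$ we deduce
\[
(\mathcal{L}-\partial_t)v=-|\partial v|^2.
\]
Differentiating this identity in $t$ and swapping $\partial_t$ with $\mathcal{L}$ produces, modulo a commutator $(\partial_tF^{ik}_{pq})v_{x_p^kx_q^i}$ controlled by the Laplacian bound of Proposition \ref{prop_C2-bound}, the auxiliary identity $(\mathcal{L}-\partial_t)(\partial_tv)=-\partial_t|\partial v|^2+\mathrm{O}(1)$.

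Next, I would establish a Bochner-type formula for $|\partial v|^2=F^{ik}_{pq}v_{x_p^k}v_{x_q^i}$. Expanding the two real derivatives of this quadratic expression and matching the resulting pairings against the inner product $\langle\cdot,\cdot\rangle$ defined in the statement yields
\[
\mathcal{L}|\partial v|^2=2|\nabla^2v|_F^2+2\langle\partial v,\partial(\mathcal{L}v)\rangle+\mathrm{O}(1+|\partial v|^2),
\]
where $|\nabla^2v|_F^2:=F^{ik}_{pq}F^{jm}_{rs}\,v_{x_p^kx_r^m}v_{x_q^ix_s^j}$, and the error collects terms produced by spatial derivatives of $F^{ik}_{pq}$, which are bounded by $C(1+|\partial v|^2)$ thanks to the uniform $C^3$ bound on $\varphi$ furnished by Proposition \ref{higher order}. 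Substituting $\mathcal{L}v=\partial_tv-|\partial v|^2$ and combining with the previous step gives
\[
(\mathcal{L}-\partial_t)\bigl(|\partial v|^2-\alpha\partial_tv\bigr)\ge 2|\nabla^2v|_F^2-2\langle\partial v,\partial|\partial v|^2\rangle+2\alpha\langle\partial v,\partial(\partial_tv)\rangle-C|\partial v|^2-C.
\]

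I would then multiply through by $t$ and use $(\mathcal{L}-\partial_t)H=t(\mathcal{L}-\partial_t)G-G$ with $G=|\partial v|^2-\alpha\partial_tv$. Since $\partial H=t\partial G=t\partial|\partial v|^2-\alpha t\,\partial(\partial_tv)$, the pair of cross terms in the display above reassembles exactly as $-2\langle\partial v,\partial H\rangle$. The quadratic positive term is extracted via the matrix Cauchy--Schwarz inequality: writing $\mathcal{L}v=\mathrm{tr}(A\cdot\mathrm{Hess}(v))$ for the symmetric $4n\times4n$ matrix $A=(a_{st})$, one obtains
\[
|\nabla^2v|_F^2=\mathrm{tr}\bigl((A\cdot\mathrm{Hess}(v))^2\bigr)\ge\frac{(\mathcal{L}v)^2}{4n}=\frac{(\partial_tv-|\partial v|^2)^2}{4n},
\]
which, together with the factor $2t$ in front, yields (with room to spare) the dominant term $\tfrac{t}{4n}(|\partial v|^2-\partial_tv)^2$ of \eqref{heat1}. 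The $-G$ contribution gives the summand $-(|\partial v|^2-\alpha\partial_tv)$, and all remaining error terms aggregate into $-tC|\partial v|^2-Ct$.

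The main obstacle will be the careful tracking of commutator and coefficient-derivative errors: every time $|\partial v|^2$, $\mathcal{L}$ and $\partial_t$ are interchanged, one produces terms involving $\partial_{x_r^m}F^{ik}_{pq}$ or $\partial_tF^{ik}_{pq}$ that must be bounded pointwise. These bounds require both uniform ellipticity and the a priori $C^3$ estimate of Proposition \ref{higher order}, which the paper has already established. A secondary subtlety is identifying the positive Bochner term with $\mathrm{tr}((A\cdot\mathrm{Hess}(v))^2)$ so that the elementary inequality $\mathrm{tr}(M^2)\ge(\mathrm{tr}M)^2/N$ can be applied in dimension $N=4n$.
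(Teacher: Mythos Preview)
Your approach is correct and is essentially the same Li--Yau computation carried out in the paper: derive $(\mathcal L-\partial_t)v=-|\partial v|^2$, establish a Bochner identity for $|\partial v|^2$, combine with the evolution of $\partial_t v$, multiply by $t$, and extract the quadratic term via Cauchy--Schwarz on the Hessian.

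Two small points of comparison are worth recording. First, the cross terms of the type $(\nabla F)\cdot(\nabla v)\cdot(\nabla^2 v)$ that arise in your Bochner formula are \emph{not} bounded by $C(1+|\partial v|^2)$ from a $C^3$ bound on $\varphi$ alone; one needs either an a priori bound on $\nabla^2 v$ (which is available here, since Proposition~\ref{higher order} gives all $C^k$ bounds on $\varphi$ and hence on $v=\log(\partial_t\varphi+B)$ via the flow equation) or the standard $\epsilon$-absorption $|\nabla F\cdot\nabla v\cdot\nabla^2 v|\le \epsilon\,\mathcal V+\tfrac{C}{\epsilon}|\partial v|^2$ into the positive Hessian term. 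The paper takes the latter route explicitly (see \eqref{53}, \eqref{three order}, \eqref{10 term}), which is closer to the classical Li--Yau argument; your shortcut is legitimate in this context but relies on more than the $C^3$ estimate you cite. Second, you apply $\mathrm{tr}(M^2)\ge(\mathrm{tr}M)^2/4n$ to the real $4n\times 4n$ matrix $A\cdot\mathrm{Hess}(v)$, whereas the paper exploits the quaternionic structure to get the sharper $\mathcal V\ge\tfrac{1}{n}(\mathcal L v)^2$; since the statement only requires the constant $\tfrac{t}{4n}$, your weaker inequality is sufficient.
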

\begin{proof}
	The proof is local. For each $z\in M$, we can find quaternionic coordinates $q_{1},\dots,q_{n}$ on a local chart around $z$. Plugging $\psi=e^{v}$ into \eqref{heat} we have
	\begin{equation}\label{easy fact}
		\mathcal{L}  v-\partial_{t} v=-|\partial v|^{2}\,,
	\end{equation}
	giving
	\begin{equation}\label{definition of H}
		H=-t\mathcal{L}  v-t(\alpha-1)\partial_{t} v\,,
	\end{equation}
	and thus also
	\begin{equation}\label{time derivatives for laplacian}
		t\partial_{t}\big(\mathcal{L}  v\big)=\frac{1}{t}H-\partial_{t} H-t(\alpha-1)\partial^{2}_{t}  v\,.
	\end{equation}
	By a straightforward computation we get
	\begin{equation}\label{derivate of H}
		\begin{split}
			-\partial_{t}H &
			=-\big(|\partial v|^{2}-\alpha \partial_{t} v \big)-2t \big\langle \partial v,{\partial}\partial_{t} v \big\rangle +t\alpha \partial^{2}_{t}  v-t \partial_{t}(F^{ik}_{pq})v_{x_{p}^{k}}v_{x_{q}^{i}}\,,\\  
			\mathcal{L}  H&=  t \mathcal{L}(|\partial v|^{2})-t \alpha\mathcal{L} (\partial_{t} v)\,.
		\end{split}
	\end{equation}
	First we deal with the term $\mathcal{L}(|\partial v|^2)$. For convenience, let us define 
	\begin{equation*}
		\mathcal{V}=F^{ik}_{pq}F^{jl}_{rs}v_{x_{r}^{l}x_{p}^{k}}v_{x_{s}^{j}x_{q}^{i}}\,, \qquad
		\mathcal{W}=F^{ik}_{pq}F^{jl}_{rs}v_{x_{r}^{l}x_{q}^{i}}v_{x_{s}^{j}x_{p}^{k}}\,.
	\end{equation*}
	By a direct calculation, we get
	\begin{equation*}
		\begin{split}
			\mathcal{L}(|\partial v|^{2})= \mathcal{V}&+\mathcal{W}+\mathcal{L} (F^{jl}_{rs})v_{x_{r}^{l}}v_{x_{s}^{j}}
			+F^{ik}_{pq}(F^{jl}_{rs})_{x_{p}^{k}}v_{x_{r}^{l}}v_{x_{s}^{j}x_{q}^{i}}\\
			&
			+F^{ik}_{pq}(F^{jl}_{rs})_{x_{p}^{k}}v_{x_{r}^{l}x_{q}^{i}}v_{x_{s}^{j}}+F^{ik}_{pq}(F^{jl}_{rs})_{x_{q}^{i}}v_{x_{r}^{l}x_{p}^{k}}v_{x_{s}^{j}}\\&
			+F^{ik}_{pq}(F^{jl}_{rs})_{x_{q}^{i}}v_{x_{r}^{l}x_{p}^{k}}v_{x_{s}^{j}}
			+F^{jl}_{rs}\mathcal{L}(v_{x_{r}^{l}})v_{x_{s}^{j}}
			+F^{jl}_{rs}v_{x_{r}^{l}}\mathcal{L}(v_{x_{s}^{j}})\,.
		\end{split}
	\end{equation*}
	Note that $\varphi$ has uniformly bounded $C^{k}$ norms for every $k>0$ by Proposition \ref{higher order}. Hence, analogously to the (almost) Hermitian case \cite{Chu2,Gill}, we deduce
	\begin{equation}\label{1 4}
		|\mathcal{L} (F^{jl}_{rs})v_{x_{r}^{l}}v_{x_{s}^{j}}|\leq C|\partial v|^{2}\,.
	\end{equation}
	For each $0<\epsilon<1$, we have that
	\begin{equation}\label{53}
		\begin{split}
			\big|F^{ik}_{pq}(F^{jl}_{rs})_{x_{p}^{k}}v_{x_{r}^{l}}v_{x_{s}^{j}x_{q}^{i}}|
			+ &  |F^{ik}_{pq}(F^{jl}_{rs})_{x_{p}^{k}}v_{x_{r}^{l}x_{q}^{i}}v_{x_{s}^{j}}\big|+|F^{ik}_{pq}(F^{jl}_{rs})_{x_{q}^{i}}v_{x_{r}^{l}x_{p}^{k}}v_{x_{s}^{j}}|\\
			+&|F^{ik}_{pq}(F^{jl}_{rs})_{x_{q}^{i}}v_{x_{r}^{l}x_{p}^{k}}v_{x_{s}^{j}}|
			\leq \frac{C}{\epsilon}|\partial v|^{2}+2\epsilon \mathcal{W}
			+2\epsilon \mathcal{V}\,.
		\end{split}
	\end{equation}
	Observe that $(\mathcal{L} v)_{x_{s}^{j}}-\mathcal{L}(v_{x_{s}^{j}})= (F^{ik}_{pq}v_{x_{p}^{k}x_{q}^{i}})_{x_{s}^{j}}-F^{ik}_{pq}v_{x_{s}^{j}x_{p}^{k}x_{q}^{i}}=(F^{ik}_{pq})_{x_{s}^{j}}v_{x_{p}^{k}x_{q}^{i}}. $
	It follows that
	\begin{equation}\label{three order}
		\begin{split}
			F^{jl}_{rs}\mathcal{L}(v_{x_{r}^{l}})v_{x_{s}^{j}}
			+&F^{jl}_{rs}v_{x_{r}^{l}}\mathcal{L}(v_{x_{s}^{j}})-2\big\langle \partial v, \partial \mathcal{L}  v\big\rangle\\
			=&F^{jl}_{rs}v_{x_{s}^{j}}(\mathcal{L}(v_{x_{r}^{l}})-(\mathcal{L} v)_{x_{r}^{l}})
			+F^{jl}_{rs}v_{x_{r}^{l}}(\mathcal{L}(v_{x_{s}^{j}})-(\mathcal{L} v)_{x_{s}^{j}})\\
			=&-F^{jl}_{rs}v_{x_{s}^{j}}(F^{ik}_{pq})_{x_{r}^{l}}v_{x_{p}^{k}x_{q}^{i}}
			-F^{jl}_{rs}v_{x_{r}^{l}}(F^{ik}_{pq})_{x_{s}^{j}}v_{x_{p}^{k}x_{q}^{i}}\\
			\geq& -\frac{C}{\epsilon}|\partial v|^{2}-\epsilon \mathcal{V}-\epsilon \mathcal{W}\,.
		\end{split}
	\end{equation}
	On the other hand,\begin{equation}\label{4 9 term1}
		\begin{split}
			2t\big\langle \partial v, \partial \mathcal{L}  v\big\rangle\stackrel{\eqref{definition of H}}{=}&
			-2\big\langle \partial v, \partial H\big\rangle
			-2t(\alpha-1)\langle \partial v, \partial \partial_{t} v\rangle\\
			\stackrel{\eqref{derivate of H}}{=}&
			-2\big\langle \partial v, \partial H\big\rangle
			-(\alpha-1)\partial_{t}H+\frac{\alpha-1}{t}H-t\alpha(\alpha-1) \partial^{2}_{t}  v\\
			&-t(\alpha-1)\partial_{t}(F^{ik}_{pq})v_{x_{p}^{k}}v_{x_{q}^{i}}\\
			\geq &
			-2\big\langle \partial v, \partial H\big\rangle
			-(\alpha-1)\partial_{t}H+\frac{\alpha-1}{t}H-t\alpha(\alpha-1) \partial^{2}_{t}  v-Ct|\partial v|^{2}\,.
		\end{split}
	\end{equation}
	It follows from \eqref{three order} and \eqref{4 9 term1} that
	\begin{equation}\label{56}
		\begin{split}
			t\Big(F^{jl}_{rs}&\mathcal{L}(v_{x_{r}^{l}})v_{x_{s}^{j}}
			+F^{jl}_{rs}v_{x_{r}^{l}}\mathcal{L}(v_{x_{s}^{j}})\Big)\\
			\geq &
			-2\big\langle \partial v, \partial H\big\rangle
			-(\alpha-1)\partial_{t}H+
			\frac{\alpha-1}{t}H-t\alpha(\alpha-1) \partial^{2}_{t}  v \\
			&{} - Ct|\partial v|^{2}-\frac{Ct}{\epsilon}|\partial v|^{2}-t\epsilon \mathcal{V}-t\epsilon \mathcal{W}\,.
		\end{split}
	\end{equation}
	Now, we treat the second term of $\mathcal{L}H$ in \eqref{derivate of H}. Using the Cauchy-Schwarz inequality, at $z$, we deduce
	\begin{equation}\label{10 term}
		\begin{split}
			-t\alpha  \mathcal{L} (\partial_{t} v)
			=& -t\alpha  \partial_{t}(\mathcal{L}  v)
			+t\alpha \partial_{t}(F^{ik}_{pq})v_{x_{p}^{k}x_{q}^{i}}\\
			\stackrel{\eqref{time derivatives for laplacian}}{=} &  -\frac{\alpha}{t}H+\alpha\partial_{t}H+t\alpha(\alpha-1)\partial^{2}_{t}  v
			+t\alpha\partial_{t}(F^{ik}_{pq})v_{x_{p}^{k}x_{q}^{i}}\\
			\geq &-\frac{\alpha}{t}H+\alpha\partial_{t}H+t\alpha(\alpha-1)\partial^{2}_{t}  v
			-\frac{Ct}{\epsilon}-t\epsilon \mathcal{V}\,,
		\end{split}
	\end{equation}
	where in the last inequality we have used the fact that $-CF^{ik}_{pq}\leq \partial_{t}(F^{ik}_{pq})\leq CF^{ik}_{pq}$ for a uniform constant $C$, which is implied by Proposition \ref{higher order}.
	
	Plugging  \eqref{1 4}, \eqref{53}, \eqref{56} and  \eqref{10 term} into  \eqref{derivate of H}, we get
	\[
	\begin{split}
		\mathcal{L} H\geq&\, t\mathcal{W}+t\mathcal{V}-Ct|\partial v|^{2}
		-t\Big(\frac{C}{\epsilon}|\partial v|^{2}+2\epsilon \mathcal{V}+2\epsilon \mathcal{W}\Big) -2\langle \partial v, \partial H\rangle
		-(\alpha-1)\partial_{t}H\\
		&
		+\frac{\alpha-1}{t}H
		-t\alpha(\alpha-1) \partial^{2}_{t}  v
		-Ct|\partial v|^{2}-\frac{Ct}{\epsilon}|\partial v|^{2}-t\epsilon (\mathcal{V}+\mathcal{W})\\
		&
		-\frac{\alpha}{t}H+\alpha\partial_{t}H+t\alpha(\alpha-1)\partial^{2}_{t}  v
		-\frac{Ct}{\epsilon}-t\epsilon \mathcal{V}\\
		\geq &\,t(1-4\epsilon)\mathcal{V}
		+t(1-3\epsilon)\mathcal{W}
		-\frac{4Ct}{\epsilon}|\partial v|^{2}
		+\partial_{t}H -\frac{1}{t}H
		-2\langle \partial v, \partial H\rangle -\frac{Ct}{\epsilon}\,.
	\end{split}
	\]
	Thus, if we choose $\frac{1}{16}\leq\epsilon\leq \frac{1}{8}$, 
	\begin{equation}\label{heat 2}
		(\mathcal{L}-\partial_{t})H\geq  \frac{t}{2}\mathcal{V}-Ct|\partial v|^{2} -\big(|\partial v|^{2}-\alpha \partial_{t} v \big)
		-2\langle \partial v, \partial H\rangle -Ct\,.
	\end{equation}
	Applying the aritheoremetic-geometric mean inequality, and by \eqref{easy fact},
	\begin{equation*}
		\mathcal{V}
		\geq \frac{1}{n}(\mathcal{L} v)^{2}
		=\frac{1}{n}\big(\partial_{t} v-|\partial v|^{2}\big)^{2}\,.
	\end{equation*}
	Plugging it into  \eqref{heat 2}, we infer that
	\begin{equation*}
		(\mathcal{L}-\partial_{t})H\geq  \frac{t}{2n}\big(\partial_{t} v-|\partial v|^{2}\big)^{2}-Ct|\partial v|^{2}  -\big(|\partial v|^{2}-\alpha \partial_{t} v \big)-2\langle \partial v, \partial H\rangle -Ct\,.
	\end{equation*}
	By the arbitrariness of $z$, this proves  \eqref{heat1}.
\end{proof}

Using the parabolic maximum principle, we can prove the following lemma.
\begin{lemma}\label{Lemma heat2} On $M\times (0,T)$, we have
	\begin{equation*}\label{}
		|\partial v|^{2}-\alpha \partial_{t} v\leq \frac{8n\alpha^{2}}{t}+\sqrt{8n\alpha^{2}\Big(C+\frac{nC^{2}\alpha^{2}}{2(\alpha-1)^{2}}\Big)}\,.
	\end{equation*}
\end{lemma}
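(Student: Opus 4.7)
The plan is to apply the parabolic maximum principle to $H$ on the parabolic cylinder $M\times[0,t^*]$ for an arbitrary fixed $t^*\in(0,T)$, and extract a quadratic inequality in $w:=|\partial v|^{2}-\alpha\partial_{t}v$ from \eqref{heat1}. Since $M\times[0,t^*]$ is compact and $H$ is continuous, $H$ attains its maximum at some $(x_{0},t_{0})$. If $t_{0}=0$ then $H\equiv t(|\partial v|^{2}-\alpha\partial_{t}v)$ vanishes there, so $H\leq 0$ on the whole cylinder and the claim follows immediately (the right-hand side being positive). The content lies in the case $t_{0}>0$.

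At an interior maximum one has $\partial H=0$, $\partial_{t}H\geq 0$ and $\mathcal{L}H\leq 0$, so the cross term $-2\langle\partial v,\partial H\rangle$ in \eqref{heat1} drops out and we are left with
\[
0\;\geq\;\frac{t_{0}}{4n}\bigl(|\partial v|^{2}-\partial_{t}v\bigr)^{2}\;-\;w\;-\;t_{0}C|\partial v|^{2}\;-\;Ct_{0}
\]
at $(x_{0},t_{0})$. Writing $A=|\partial v|^{2}$ and $B=\partial_{t}v$, the identity $B=(A-w)/\alpha$ gives
\[
(A-B)^{2}\;=\;\frac{\bigl((\alpha-1)A+w\bigr)^{2}}{\alpha^{2}}.
\]
We may assume $w(x_{0},t_{0})>0$ (otherwise the lemma is trivial there), and then the expansion of $((\alpha-1)A+w)^{2}$ yields the simple lower bound $w^{2}+(\alpha-1)^{2}A^{2}$ after discarding the non-negative cross term $2(\alpha-1)Aw$. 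Substituting back,
\[
\frac{t_{0}}{4n\alpha^{2}}w^{2}+\frac{t_{0}(\alpha-1)^{2}}{4n\alpha^{2}}A^{2}\;\leq\; w+t_{0}CA+Ct_{0}.
\]

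Next I would absorb the linear-in-$A$ term into the quadratic-in-$A$ one by Young's inequality, choosing the splitting so that the $A^{2}$-coefficient on the right is strictly smaller than on the left; this eliminates $A$ and produces
\[
\frac{t_{0}}{c_{1}n\alpha^{2}}\,w^{2}\;\leq\; w+t_{0}\left(C+\frac{c_{2}\,nC^{2}\alpha^{2}}{(\alpha-1)^{2}}\right)
\]
for explicit constants $c_{1},c_{2}$. Solving this quadratic (using $w^{2}\leq aw+b\Rightarrow w\leq a+\sqrt{b}$ for $a,b>0$) supplies the bound
\[
w(x_{0},t_{0})\;\leq\;\frac{8n\alpha^{2}}{t_{0}}+\sqrt{8n\alpha^{2}\left(C+\frac{nC^{2}\alpha^{2}}{2(\alpha-1)^{2}}\right)}.
\]
Finally, from $H(x,t^*)\leq H(x_{0},t_{0})=t_{0}w(x_{0},t_{0})$ and $t_{0}\leq t^*$, dividing by $t^*$ yields the stated inequality at the arbitrary time $t^*$, hence on all of $M\times(0,T)$.

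The only delicate point is the bookkeeping with Young's inequality so that the constants line up exactly as in the statement; the structural step — converting the differential inequality of Lemma \ref{Lemma heat1} into an algebraic quadratic in $w$ at the maximum point — is the standard Li--Yau device and poses no conceptual obstacle, since the concavity of $F$ has already been used upstream to produce the favourable sign of $(\mathcal{L}-\partial_{t})H$ and the uniform ellipticity bounds from Proposition \ref{higher order} guarantee that all coefficients appearing in $\mathcal{L}$ are controlled.
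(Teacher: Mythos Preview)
Your proof is correct and follows essentially the same route as the paper: apply the maximum principle to $H$ on $M\times[0,t_0]$, use $\partial H=0$ and $(\mathcal{L}-\partial_t)H\leq 0$ at the maximum to reduce \eqref{heat1} to an algebraic inequality, expand $(|\partial v|^2-\partial_t v)^2=\alpha^{-2}((\alpha-1)|\partial v|^2+w)^2$ and drop the non-negative cross term, absorb the linear $|\partial v|^2$-term into the quartic one by Young, and solve the resulting quadratic in $w$ (the paper works with $H=tw$ rather than $w$, but this is only a cosmetic difference). Your caveat about the constant bookkeeping is apt---the paper's stated constants are themselves somewhat loose.
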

\begin{proof}
	Let us fix an arbitrary time $t_{0}\in (0,T)$. Suppose $H(x,t)$ (as in \eqref{definition of H}) achieves its maximum at the point $(\hat{q},\hat{t})\in M\times [0,t_{0}]$. We may assume $\hat{t}>0$, otherwise $|\partial v|^{2}-\alpha \partial_{t} v\leq 0$ on $M\times [0,t_{0}]$ and we are done. It follows that
	$$H(\hat{q},\hat{t})\geq H(\hat{q},0)=0\,.$$
	Using the maximum principle at $(\hat{q},\hat{t})$, we deduce $(\mathcal{L}-\partial_{t})H\leq 0$ and $\partial H=0$. Substituting this into  \eqref{heat1} yields
	\begin{equation}\label{max principle}
		\begin{split}
			\frac{\hat{t}^{2}}{4n}\big(|\partial v|^{2}-\partial_{t} v\big)^{2}-C\hat{t}^{2}|\partial v|^{2}
			-H \leq C\hat{t}^{2}\,.
		\end{split}
	\end{equation}
	Notice that at $(\hat{q},\hat{t})$,
	\begin{equation}\label{first term}
		\begin{split}
			\hat{t}^{2}\big(|\partial v|^{2}-\partial_{t} v\big)^{2}&= \frac{\hat{t}^{2}}{\alpha^{2}}\big(|\partial v|^{2}-\alpha \partial_{t} v+(\alpha-1)|\partial v|^{2}\big)^{2}\\
			&= \frac{H^{2}}{\alpha^{2}}+\Big(\frac{\alpha-1}{\alpha}\Big)^{2}\hat{t}^{2}|\partial v|^{4}+\frac{2(\alpha-1)\hat{t}H}{\alpha^{2}}|\partial v|^{2}\\
			&\geq \frac{H^{2}}{\alpha^{2}}+\Big(\frac{\alpha-1}{\alpha}\Big)^{2}\hat{t}^{2}|\partial v|^{4}\,,
		\end{split}
	\end{equation}
	where we have used the fact that $H$ is nonnegative at $(\hat{q},\hat{t})$.
	Using the elementary inequality $ax^{2}+bx\geq -\frac{b^{2}}{4a}$, we get
	\begin{equation}\label{second term}
		\frac{1}{4n}\Big(\frac{\alpha-1}{\alpha}\Big)^{2}\hat{t}^{2}|\partial v|^{4}
		-\hat{t}^{2}C|\partial v|^{2}
		\geq -\frac{nC^{2}\alpha^{2}}{2(\alpha-1)^{2}}\hat{t}^{2}\,.
	\end{equation}
	Plugging \eqref{first term} and \eqref{second term} into  \eqref{max principle} gives
	\begin{equation*}
		\frac{H^{2}}{4n\alpha^{2}}\leq H+ C\hat{t}^{2}+\frac{nC^{2}\alpha^{2}}{2(\alpha-1)^{2}}\hat{t}^{2}\,;
	\end{equation*}
	from which we can deduce
	\begin{equation*}
		H(\hat{q},\hat{t})\leq 8n\alpha^{2}+\sqrt{8n\alpha^{2}\Big(C+\frac{nC^{2}\alpha^{2}}{2(\alpha-1)^{2}}\Big)}\hat{t}\,.
	\end{equation*}
	Hence, at each point $q\in M$,
	\begin{equation*}
		\begin{split}
			H(q,t_{0})\leq & H(\hat{q},\hat{t})
			\leq 8n\alpha^{2}+\sqrt{8n\alpha^{2}\Big(C+\frac{nC^{2}\alpha^{2}}{2(\alpha-1)^{2}}\Big)}t_{0}\,.
		\end{split}
	\end{equation*}
	Consequently, at $(q,t_{0})$,
	\[
	|\partial v|^{2}-\alpha \partial_{t} v\leq \frac{8n\alpha^{2}}{t_{0}}+\sqrt{8n\alpha^{2}\Big(C+\frac{nC^{2}\alpha^{2}}{2(\alpha-1)^{2}}\Big)}\,.
	\]
	Then the lemma follows by arbitrariness of $t_{0}$.
\end{proof}

\medskip

\subsection{Parabolic Harnack inequality} \hfill\\
\vskip-0.4em

Let $\psi=\partial_{t}\varphi+B$ for a large constant $B$ such that $\psi>0$ on $M$. By \eqref{heat equation} we know
\begin{equation}\label{box}
	\mathcal{L}\psi-\partial_{t}\psi=0\,.
\end{equation}
With the results of the previous subsection we can prove the following useful parabolic Harnack inequality:
\begin{proposition}
	Let $0<t_{1}<t_{2}<T$. Then there exist constants $C_{i}$ $(i=1,2,3)$ depending only on $(M,I,J,K)$, $\Omega$ and $f$ such that
	\begin{equation}\label{harnack}
		\sup_{M}\psi(\cdot,t_{1})\leq \inf_{ M}\psi(\cdot,t_{2})\left(\frac{t_{2}}{t_{1}}\right)^{C_{1}}
		\exp\left(\frac{C_{2}}{t_{2}-t_{1}}+C_{3}(t_{2}-t_{1})\right).
	\end{equation}
\end{proposition}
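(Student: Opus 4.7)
The plan is to implement the classical Li--Yau argument \cite{LY86}: integrate the pointwise differential inequality of Lemma \ref{Lemma heat2} along a minimizing geodesic in $M$ joining two arbitrary space points. Set $v=\log\psi$, which is well defined because $\psi>0$ by assumption. Dividing the inequality of Lemma \ref{Lemma heat2} by $\alpha$, we obtain
\[
\partial_t v \geq \frac{1}{\alpha}|\partial v|^{2} - \frac{8n\alpha}{t} - \frac{C_0}{\alpha}
\]
for a uniform constant $C_0>0$, where $|\partial v|^2=F^{ik}_{pq}\,v_{x_p^k}v_{x_q^i}$ is the norm induced by the linearized operator.

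Now fix $p_1,p_2\in M$ and let $\gamma\colon[t_1,t_2]\to M$ be a constant-speed minimizing $g$-geodesic with $\gamma(t_1)=p_1$ and $\gamma(t_2)=p_2$; such a curve exists because $(M,g)$ is compact, and satisfies $\int_{t_1}^{t_2}|\dot\gamma|_g^{2}\,ds = d_g(p_1,p_2)^{2}/(t_2-t_1)\leq D^{2}/(t_2-t_1)$, where $D=\operatorname{diam}(M,g)$. From the a priori estimates of Sections \ref{C0}--\ref{higher} the eigenvalues $\lambda(A[\phi])$ remain in a fixed compact subset of $\Gamma$, so the bilinear form induced by $F^{ik}_{pq}$ is uniformly comparable to $g$; in particular there exists a uniform $K>0$ with $|\nabla v|_g^{2}\leq K|\partial v|^{2}$. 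Combining this with the Cauchy--Schwarz inequality and completing the square in $|\partial v|$ gives
\[
\partial_t v + g(\nabla v,\dot\gamma) \geq \frac{1}{\alpha}|\partial v|^2 - \sqrt{K}\,|\partial v|\,|\dot\gamma|_g - \frac{8n\alpha}{t}-\frac{C_0}{\alpha} \geq -\frac{K\alpha}{4}|\dot\gamma|_g^{2} - \frac{8n\alpha}{t} - \frac{C_0}{\alpha}.
\]

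Integrating $\frac{d}{ds}v(\gamma(s),s)=\partial_t v+g(\nabla v,\dot\gamma)$ over $[t_1,t_2]$ yields
\[
v(p_2,t_2) - v(p_1,t_1) \geq -\frac{K\alpha D^{2}}{4(t_2-t_1)} - 8n\alpha\log\frac{t_2}{t_1} - \frac{C_0(t_2-t_1)}{\alpha}.
\]
Exponentiating, choosing $p_1$ where $\psi(\cdot,t_1)$ attains its supremum and $p_2$ where $\psi(\cdot,t_2)$ attains its infimum, and rearranging, we arrive at \eqref{harnack} with $C_1=8n\alpha$, $C_2=K\alpha D^{2}/4$, $C_3=C_0/\alpha$. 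The only technical point is the uniform equivalence $|\nabla v|_g^{2}\leq K|\partial v|^{2}$, which is automatic once we know $\lambda(A[\phi])$ remains in a compact subset of $\Gamma$; this is guaranteed by the higher-order estimates already established in Propositions \ref{prop_C2-bound} and \ref{higher order}. The remainder is a routine completion of squares and integration along a geodesic, so no substantial obstacle arises.
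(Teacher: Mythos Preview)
Your proposal is correct and follows essentially the same approach as the paper: the paper's own proof simply cites Lemmas \ref{Lemma heat1} and \ref{Lemma heat2} and then defers to the Li--Yau style argument of \cite{Chu2,Gill}, which is precisely the integration-along-a-geodesic procedure you have written out in detail. Your handling of the only nontrivial point---the uniform comparability of $|\nabla v|_g^2$ and $|\partial v|^2$ via the a priori estimates keeping $\lambda(A[\phi])$ in a compact subset of $\Gamma$---matches the paper's implicit use of uniform ellipticity (cf.\ the discussion around \eqref{non-div}).
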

\begin{proof}
	With Lemmas \ref{Lemma heat1} and \ref{Lemma heat2}, we can apply the procedure of \cite{Chu2,Gill} verbatim.
\end{proof}

\medskip

\subsection{Convergence of the parabolic flow}

\begin{proposition}\label{Convergence}
	Suppose $T=\infty $, $ \osc_M \phi(\cdot,t)\leq C$ and $\|\nabla^k \phi \|_{C^0}\leq C$ for any $k>0$, where $C>0$ is a uniform constant. Then the normalization $\tilde{\phi}$ converges in $C^\infty$ topology to a smooth function $\tilde{\phi}_\infty$ that satisfies
	\[
	F(A[\tilde{\phi}_\infty])=h+b\,,
	\]
	for some constant $b\in \R$.
\end{proposition}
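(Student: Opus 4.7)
The plan is to exploit the parabolic Harnack inequality \eqref{harnack} to force the oscillation of $\partial_t\varphi$ to decay exponentially in time, use this to obtain $C^0$ convergence of the normalized solution, and then promote the convergence to $C^\infty$ by interpolating against the uniform higher-order bounds hypothesized in the statement.

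Concretely, I would first note that $\partial_t\varphi$ solves the linear parabolic equation \eqref{heat equation}, so by the maximum principle $M(t):=\sup_M\partial_t\varphi(\cdot,t)$ is non-increasing and $m(t):=\inf_M\partial_t\varphi(\cdot,t)$ is non-decreasing. I would then apply \eqref{harnack} separately to the two non-negative solutions $M(t_0)-\partial_t\varphi$ and $\partial_t\varphi-m(t_0)$ (each being a translate of a solution of \eqref{box}, so Harnack applies after adjusting the constant $B$) on the time window $[t_0+1,t_0+2]$, and sum the two resulting inequalities. Using the monotonicity of $M$ and $m$, a rearrangement yields
\[
\osc_M \partial_t\varphi(\cdot,t_0+2) \;\le\; \frac{K-1}{K+1}\,\osc_M \partial_t\varphi(\cdot,t_0),
\]
where $K$ is the Harnack constant for a unit time-step, which remains bounded as $t_0\to\infty$. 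Iterating this geometric contraction yields $\osc_M \partial_t\varphi(\cdot,t)\le Ce^{-\delta t}$ for uniform $\delta>0$ and $t$ large.

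Since $A[\varphi]=A[\tilde\varphi]$ and $\partial_t\tilde\varphi=\partial_t\varphi-\partial_t\bar\varphi(t)$ has zero spatial average (where $\bar\varphi(t)$ is the quantity subtracted in \eqref{tilde varphi}), I would then observe $|\partial_t\tilde\varphi|\le\osc_M\partial_t\varphi\le Ce^{-\delta t}$. Integrating in $t$ shows that $\tilde\varphi(x,\cdot)$ is Cauchy, uniformly in $x$, and hence converges in $C^0$ to some function $\tilde\varphi_\infty$. The uniform $C^k$ bounds on $\varphi$ (and therefore on $\tilde\varphi$) hypothesized in the statement, combined with a standard interpolation inequality, upgrade the convergence to $\tilde\varphi(\cdot,t)\to\tilde\varphi_\infty$ in $C^\infty(M)$; in particular the limit is smooth.

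Finally, to identify the limiting equation I would rewrite the flow as $\partial_t\tilde\varphi=F(A[\tilde\varphi])-h-\partial_t\bar\varphi(t)$ and let $t\to\infty$. The left-hand side tends to $0$ smoothly and $F(A[\tilde\varphi])\to F(A[\tilde\varphi_\infty])$ in $C^\infty$, which forces $\partial_t\bar\varphi(t)$ to tend to a constant $b\in\mathbb{R}$, whence $F(A[\tilde\varphi_\infty])=h+b$. The main obstacle in this scheme is the first step: the Harnack constant $K$ in \eqref{harnack} does not itself tend to $1$, so a single application yields no contraction. The key trick is to apply Harnack simultaneously to the ``upper'' and ``lower'' non-negative solutions $M(t_0)-\partial_t\varphi$ and $\partial_t\varphi-m(t_0)$ and add; monotonicity of $M$ and $m$ then converts the bare Harnack constant $K$ into the genuine contraction factor $(K-1)/(K+1)<1$.
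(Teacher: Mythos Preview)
Your proposal is correct and follows essentially the same route as the paper: apply the Harnack inequality \eqref{harnack} to the two non-negative shifted solutions $\sup_M\psi(\cdot,m-1)-\psi$ and $\psi-\inf_M\psi(\cdot,m-1)$, add, and rearrange to get a geometric contraction of $\osc_M\partial_t\varphi$, hence exponential decay; then use the zero-average condition on $\partial_t\tilde\varphi$, integrate in time, and upgrade via the uniform $C^k$ bounds. The only cosmetic differences are the choice of time step (the paper takes $t_1=\tfrac12,\ t_2=1$ on $[m-1,m]$ and gets the factor $1-1/C$, you take unit steps and get $(K-1)/(K+1)$) and that the paper phrases the $C^0$ convergence via the monotone quantities $\tilde\varphi\pm\tfrac{C}{\delta}e^{-\delta t}$ rather than a Cauchy argument.
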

\begin{proof}
	Set $\psi=\partial_{t}\varphi+B$ for a large constant $B$ such that $\psi>0$. For each $m\in\mathbb{N}$, we define
	\[
	\check{\psi}_{m}(x,t):=\sup_{M}\psi(\cdot,m-1)-\psi(x,m-1+t)\,;
	\]
	\[
	\hat{\psi}_{m}(x,t):=\psi(x,m-1+t)-\inf_{M}\psi(\cdot,m-1)\,.
	\]
	It is straightforward to verify that
	\[
	(\partial_{t}-\mathcal{L})\psi=(\partial_{t}-\mathcal{L})\hat{\psi}_{m}=(\partial_{t}-\mathcal{L})\check{\psi}_{m}=0\,.
	\]
	Applying the parabolic Harnack inequality \eqref{harnack}, this yields
	\begin{equation*}
		\sup_{M}\hat{\psi}_{m}(\cdot,t_{1})\leq C\inf_{ M}\hat{\psi}_{m}(\cdot,t_{2})\,,
		\qquad
		\sup_{M}\check{\psi}_{m}(\cdot,t_{1})\leq C\inf_{ M}\check{\psi}_{m}(\cdot,t_{2})\,.
	\end{equation*}
	Choosing $t_{1}=\frac{1}{2}$, $t_{2}=1$ we get
	\begin{equation}\label{7.7}
		\begin{split}
			\sup_{M}\psi\left(\cdot,m-\frac{1}{2}\right)-\inf_{ M}\psi(\cdot,m-1)&\leq C\left(\inf_{M}\psi(\cdot,m)-\inf_{ M}\psi(\cdot,m-1)\right),\\
			\sup_{M}\psi(\cdot,m-1)-\inf_{ M}\psi\left(\cdot,m-\frac{1}{2}\right)&\leq C\left(\sup_{ M}\psi(\cdot,m-1)-\sup_{M}\psi(\cdot,m)\right).
		\end{split}
	\end{equation}
	In light of \eqref{7.7}, if we set
	\[
	\theta(t)=\sup_{M}\psi(\cdot,t)-\inf_{M}\psi(\cdot,t)
	\]
	for the oscillation, then we have
	\begin{equation*}
		\theta(m-1)+\theta\left(m-\frac{1}{2}\right)\leq C\big(\theta(m-1)-\theta(m)\big)\,,
	\end{equation*}
	which implies that $\theta(m)\leq e^{-\delta}\theta(m-1)$, where $\delta:=-\log(1-\frac{1}{C})>0$, and by induction
	\[
	\theta(t)\leq Ce^{-\delta t}\,.
	\]
	Since we have $\int_{M}\partial_{t}\tilde{\varphi}=0$, by the mean value theorem, there exists a point $x_{t}\in M$ such that $\partial_{t}\tilde{\varphi}(x_{t}, t)=0$. Therefore,
	\[
	\begin{split}
		\big|\partial_{t}\tilde{\varphi}(x,t)\big|&=\big|\partial_{t}\tilde{\varphi}(x,t)-\partial_{t}\tilde{\varphi}(x_{t},t)\big|
		\leq \osc_{M}\partial_{t}\tilde{\varphi}(\cdot,t)\\
		&= \osc_{M}\partial_{t}\varphi(\cdot,t)=\theta(t) \leq  Ce^{-\delta t}\,,  
	\end{split}
	\]
	which yields that $\tilde{\varphi}+\frac{C}{\delta}e^{-\delta t}$ (resp. $\tilde{\varphi}-\frac{C}{\delta}e^{-\delta t}$) is non-increasing (resp. non-decreasing) with respect to $t$. It then follows from the uniform bounds on $\phi$ that $\tilde{\phi}$ is uniformly bounded in $C^\infty$ topology, therefore there is a sequence of times $t_j\to \infty$ such that $\tilde \phi (\cdot,t_j)$ converges smoothly to some smooth function $\tilde{\phi}_\infty$ and it is fairly standard to show that actually $\lim_{t\rightarrow\infty}\tilde{\varphi}=\tilde{\phi}_\infty$ in the $C^{\infty}$ topology.
	
	Finally, the limiting function $\tilde{\phi}_\infty$ satisfies
	\[
	0=\lim_{t\to \infty} \partial_t \tilde{\phi}(\cdot,t)=\lim_{t\to \infty} \left( F(A[\tilde{\phi}])-h-\frac{\int_M \partial_t \phi\, \Omega_0^n\wedge \bar \Omega_0^n}{\int_M \Omega_0^n\wedge \bar \Omega_0^n} \right)=F(A[\tilde{\phi}_\infty])-h-b\,,
	\]
	where we set
	\[
	b=\lim_{t\to \infty}\frac{\int_M \partial_t \phi\, \Omega_0^n\wedge \bar \Omega_0^n}{\int_M \Omega_0^n\wedge \bar \Omega_0^n}\,. \qedhere
	\]
\end{proof}

\section{Proof of Theorems \ref{unbounded theorem}--\ref{theorem 5}}\label{final}
We are ready to complete the proofs of Theorems \ref{unbounded theorem} and \ref{bounded theorem}, from which we will infer Theorems \ref{theorem Hessian}, \ref{theorem n-1 qpsh flow} and \ref{theorem 5}.

\begin{proof}[Proof of Theorem $\ref{unbounded theorem}$]
	Let $ (M,I,J,K,g) $ be a compact flat hyperk\"ahler manifold, $ \phi,\tilde\phi \colon M \to \R $ be the solution to \eqref{eq_main} and its normalization (defined in \eqref{tilde varphi}). The initial datum $\phi_0$ is assumed $\Gamma$-admissible and, since $f$ is unbounded, every $\Gamma$-admissible function is automatically a parabolic $\mathcal{C}$-subsolution. Hence we may apply Proposition \ref{c0 estimate} and deduce $\osc_M \phi(\cdot,t)\leq C$ and $\|\tilde \phi \|_{C^0}\leq C$. This bounds allow to obtain from Propositions \ref{prop_C2-bound} and \ref{gradient} a uniform constant $C$ such that $\Delta_g \phi \leq C$. Applying now Proposition \ref{higher order} we infer long-time existence of $\phi$ and uniform bounds on its derivatives of any order. Finally, Proposition \ref{Convergence} yields smooth convergence of the normalization $\tilde{\phi}$ to some function $\tilde{\phi}_\infty$ which is a solution of \eqref{elliptic function}, i.e.
	\[
	F(A[\tilde{\phi}_\infty])=h+b
	\]
	for a suitable constant $b\in \R$.
\end{proof}

\begin{proof}[Proof of Theorem $\ref{bounded theorem}$]
	The proof is quite similar to the one of Theorem \ref{unbounded theorem}. Indeed, suppose $f$ is bounded on $\Gamma$ and assume that it satisfies either one of the two conditions expressed in the statement of Theorem \ref{bounded theorem}, we are still able to apply Proposition \ref{c0 estimate} and deduce $\osc_M \phi(\cdot,t)\leq C$ and $\|\tilde \phi \|_{C^0}\leq C$.  Now we can employ the arguments in the proof of Theorem \ref{unbounded theorem} to complete the proof.
\end{proof}


Now we prove Theorem \ref{theorem Hessian} and Theorem \ref{theorem n-1 qpsh flow} as applications of Theorem \ref{unbounded theorem}.
\begin{proof}[Proof of Theorem $ \ref{theorem Hessian} $]
	The result follows as a simple application of Theorem \ref{unbounded theorem} once we choose $ f=\log \sigma_k $ defined over the cone
	\[
	\Gamma=\Gamma_k:=\{ \lambda \in \R^n \mid \sigma_1(\lambda),\dots,\sigma_k(\lambda)>0 \}\,,
	\]
	where $ \sigma_r $ is the $ r $-th elementary symmetric function
	\[\sigma_r(\lambda)=\sum_{1\leq i_1<\dots<i_r\leq n} \lambda_{i_1}\cdots \lambda_{i_r}\,, \quad \text{ for all }\lambda=(\lambda_1,\dots,\lambda_n)\in \R^n\,.\]
	Indeed, on a locally flat hyperhermitian manifold a function $u$ of class  $C^2$  lies in $ {\rm QSH}_{k}(M,\Omega) $ if and only if it is $\Gamma_k$-admissible. The function $f$ satisfies our structural assumptions C1--C3 (see e.g. \cite{Spruck}) and it is straightforward to check that it is unbounded over $\Gamma_k$. Finally, with this setup, the quaternionic Hessian flow \eqref{qh} becomes $\partial_t \phi=f(\lambda(A[\phi]))-H,$ as desired.
\end{proof}

\begin{proof}[Proof of Theorem $\ref{theorem n-1 qpsh flow}$]
	Define
	\[
	f = \log\sigma_{n}(T)\,, \qquad \Gamma = T^{-1}(\Gamma_{n})\,,
	\]
	where $T\colon \R^n \to \R^n$ is the linear map defined by
	\[
	T(\lambda) = \big(T(\lambda)_{1},\ldots,T(\lambda)_{n}\big)\,, \qquad
	T(\lambda)_{k} = \frac{1}{n-1}\sum_{i\neq k}\lambda_{i}\,, \qquad \text{for every } \lambda\in\mathbb{R}^{n}\,.
	\]
	An easy verification shows that assumptions C1--C3 are satisfied and that $f$ is unbounded over $\Gamma$. Setting
	\[
	\Omega: =\Re \left(g^{\bar j s}(\Omega_{1})_{\bar j s}\right)\Omega_{0}-(n-1)\Omega_{1}\,,
	\]
	one can easily see that $u\in C^2(M,\R)$ lies in ${\rm QPSH}_{n-1}(M,\Omega_1,\Omega_0)$ if and only if $\lambda(A[u])\in \Gamma $, where $A[u]=g^{\bar j r}(\Omega_{\bar j s}+u_{\bar j s})$. We can then rewrite the $(n-1)$-quaternionic plurisubharmonic flow \eqref{n-1 qpsh flow} as $ \partial_t \phi =f(\lambda(A[\phi]))-H $ and apply Theorem \ref{unbounded theorem} to conclude.
\end{proof}

Finally, we conclude the paper with the proof of Theorem \ref{theorem 5}.

\begin{proof}[Proof of Theorem $\ref{theorem 5}$]
	Let $\underline \phi $ be an elliptic $\mathcal{C}$-subsolution of the equation
	\[F(A[\phi])=h\,,
	\]
	which we have shown that can be seen as a time-independent parabolic $\mathcal{C}$-subsolution of our flow \eqref{eq_main}. Consider flow \eqref{eq_main} with a $\Gamma$-admissible initial datum $\phi_0$, then condition \eqref{case-1} of Theorem \ref{bounded theorem} is trivially verified, and this concludes the proof.
\end{proof}

\bibliography{ijmsample}

\bibliographystyle{ijmart}

\end{document}